\newtheorem{lemma}{Lemma}[section]
\newtheorem{proposition}[lemma]{Proposition}
\newtheorem{corollary}[lemma]{Corollary}
\newtheorem{theorem}[lemma]{Theorem}
\newtheorem{example}[lemma]{Example}
\newtheorem{definition}[lemma]{Definition}
\newtheorem{remark}[lemma]{Remark}
\newtheorem*{Acknowledgement}{Acknowledgements}
\newcommand\cf{cf\@. }
\newcommand\pa{ \partial}
\newcommand\bbC{\mathbb C}
\newcommand\bbN{\mathbb N}
\newcommand\bbP{\mathbb P}
\newcommand\bbR{\mathbb R}
\newcommand\bbS{\mathbb S}
\newcommand\bbZ{\mathbb Z}
\newcommand\tX{\widetilde{X}}
\newcommand\tx{\widetilde{x}}
\newcommand\bX{\overline{X}}
\newcommand\bU{\overline{\mathcal{U}}}
\newcommand\CI{\mathcal{C}^{\infty}}
\newcommand\cC{\mathcal{C}}
\newcommand\cA{\mathcal{A}}
\newcommand\cE{\mathcal{E}}
\newcommand\cF{\mathcal{F}}
\newcommand\cL{\mathcal{L}}
\newcommand\cR{\mathcal{R}}
\newcommand\db{\overline{\pa}}
\newcommand\cV{\mathcal{V}}
\newcommand\cU{\mathcal{U}}
\newcommand\End{\operatorname{End}}
\newcommand\barf{\overline{f}}
\newcommand\phg{\operatorname{phg}}
\newcommand\Id{\operatorname{Id}}
\newcommand\tbeta{\widetilde{\beta}}
\renewcommand\sc{\operatorname{sc}}
\newcommand\cH{\mathcal{H}}
\newcommand\AC{\operatorname{AC}}
\newcommand{\bx}{\overline{x}}
\newcommand\QAC{\operatorname{QAC}}
\newcommand\Qb{\operatorname{Qb}}
\newcommand\bV{\overline{V}}
\newcommand\bW{\overline{W}}
\newcommand\tomega{\widetilde{\omega}}
\newcommand\tW{\widetilde{W}}
\newcommand\cX{\mathcal{X}}
\newcommand\cO{\mathcal{O}}
\newcommand\cW{\mathcal{W}}
\newcommand\dv{\operatorname{f}}
\begin{document}
\title[Complete Calabi-Yau metrics on $\bbC^n$]
{ New examples of complete Calabi-Yau metrics on $\bbC^n$ for $n\ge 3$}

\author{Ronan J.~Conlon}
\address{Department of Mathematics and Statistics, Florida International University, Miami, FL 33199, USA}
\email{rconlon@fiu.edu}

\author{Fr\'ed\'eric Rochon}
\address{Département de Mathématiques, Universit\'e du Qu\'ebec \`a Montr\'eal}
\email{rochon.frederic@uqam.ca}

\maketitle

\begin{abstract}
For each $n\ge 3$, we construct on $\bbC^n$ examples of complete Calabi-Yau metrics of Euclidean volume growth having a tangent cone at infinity with singular cross-section.  
\end{abstract}

\tableofcontents

\numberwithin{equation}{section}

\section{Introduction}

A complete K\"ahler manifold $M$ is Calabi-Yau if it is Ricci-flat and has a nowhere vanishing parallel holomorphic volume form $\Omega\in H^{0}(M,\,K_{M})$.
On compact K\"ahler manifolds, Calabi-Yau metrics are unique in their K\"ahler class, a statement which fails to hold true on non-compact Calabi-Yau manifolds, even modulo scaling and diffeomorphisms, as the Taub-NUT metric and flat metric on $\mathbb{C}^{2}$ demonstrate. However, the Taub-NUT metric has cubic volume growth whereas the flat metric has Euclidean volume growth. Thus, the question remains as to whether one can obtain uniqueness modulo scaling and diffeomorphisms for Calabi-Yau metrics of a fixed volume growth in each K\"ahler class.

In this article, we are concerned with Calabi-Yau manifolds of Euclidean volume growth. In this case, there is a tangent cone at infinity which is
unique when there exists a tangent cone with a smooth cross section \cite{Colding-Minicozzi} and which is likely to be unique in general. Examples of such manifolds that converge smoothly to a smooth Calabi-Yau cone at infinity at a rate of $O(r^{-\epsilon})$ -- so-called asymptotically conical Calabi-Yau manifolds -- have been constructed in \cite{vanC, vanC2011, Goto, CH2013, CH2015}, whereas examples with a unique singular tangent cone at infinity can be found in \cite{Joyce, Carron2011, CDR2016}. Simplifying the above problem by considering Calabi-Yau manifolds with Euclidean volume growth that have a unique tangent cone at infinity as in the aforementioned examples, one may ask whether the underlying complex manifold admits another Calabi-Yau metric of Euclidean volume growth in the same K\"ahler class as the initial metric but with a different tangent cone at infinity.

The simplest example to consider here is $\mathbb{C}^{n}$ endowed with the flat metric. On $\mathbb{C}^{2}$, Tian showed in \cite{Tian2006} that every Calabi-Yau metric of Euclidean volume growth has to be flat and conjectured that the same should hold true on $\mathbb{C}^{n}$ for all $n\geq 3$. On $\mathbb{C}^{3}$, a counterexample to this conjecture was recently found by Yang Li \cite{YangLi}. The purpose of this paper is to provide a counterexample to this conjecture on $\mathbb{C}^{n}$ for all $n\geq 3$. Before stating our main theorem, we introduce some preliminaries.

For $n\geq3$, let $F_{k}$ be a K\"ahler-Einstein Fano manifold, defined as the zero locus of a
homogeneous polynomial $P_{k}$ in $\mathbb{CP}^{n-1}$ of degree $k$ for $2\le k \le n-1$ (of which there are many examples). Then the affine
cone $V_{0,k}\subset \mathbb{C}^{n}$ over $F_{k}$, defined as the zero locus of $P_{k}$ considered as
an equation on $\mathbb{C}^{n}$, is a Calabi-Yau cone. With this, our main result takes the following form (see Theorem~\ref{cy.3} and Corollary~\ref{cy.4} for more details).
\begin{theorem}\label{main}
For each $n\ge 3$ and for each K\"ahler-Einstein Fano manifold $F_{k}$ as above, there exists a complete Calabi-Yau metric on $\bbC^n$ with Euclidean volume growth with tangent cone at infinity $\mathbb{C}\times V_{0,\,k}$. In particular, these metrics are not isometric to the Euclidean metric on $\bbC^n$.
\end{theorem}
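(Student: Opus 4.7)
The plan is to construct the Calabi-Yau metric on $\bbC^n$ by perturbing an explicit approximate solution, generalizing Y.~Li's construction for $n=3$ and carrying it out within the $\QAC/\QFB$ analytic framework developed in the authors' earlier work. The basic building block is the smooth affine smoothing $V_{t,k}=\{P_k=t\}\subset\bbC^n$ of the Calabi-Yau cone $V_{0,k}$, which for each $t\ne 0$ carries an asymptotically conical Calabi-Yau metric $\omega_t$ with tangent cone $V_{0,k}$ at infinity, supplied by the results of Conlon-Hein and van Coevering cited in the introduction. Viewing $\bbC^n$ via the holomorphic fibration $\pi = P_k:\bbC^n\to\bbC$, the idea is to combine the fibrewise AC metric $\omega_t$ on $\pi^{-1}(t)$ with a base contribution on $\bbC$, rescaled by an appropriate power of $|t|$ matching the homogeneity of $P_k$, so as to produce a global semi-flat type K\"ahler form $\omega_{\mathrm{app}}$ on $\bbC^n$ whose asymptotic geometry is that of $\bbC\times V_{0,k}$.

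I would then make this semi-flat ansatz precise on the locus where $\pi$ is submersive by choosing a K\"ahler potential of the form $\phi_{\mathrm{app}} = \phi_{\mathrm{base}}(P_k) + |P_k|^{2/k}\phi_{\mathrm{fib}}$, with $\phi_{\mathrm{base}}$ radial on the base and $\phi_{\mathrm{fib}}$ the rescaled AC potential on the fibre, and extend smoothly across the singular fibre and down to a compact piece of $\bbC^n$ by cutoff-and-matching. Next, I would compactify $\bbC^n$ to a manifold with corners carrying a $\QFB$-fibration structure with two boundary hypersurfaces, one for the asymptotic $\bbC$-direction and one for the asymptotic $V_{0,k}$-direction, verify that $\omega_{\mathrm{app}}$ is $\QAC$ with respect to this structure, and establish that the Ricci-flatness defect
\[
f = \log\!\left(\frac{\omega_{\mathrm{app}}^n}{i^{n^2}\,\Omega\wedge\overline{\Omega}}\right),\qquad \Omega = dz_1\wedge\cdots\wedge dz_n,
\]
is polyhomogeneous on the compactification with a prescribed decay rate at each boundary hypersurface.

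The central analytic step is to solve the complex Monge-Amp\`ere equation
\[
(\omega_{\mathrm{app}} + i\pa\db u)^n = e^{f}\,\omega_{\mathrm{app}}^n
\]
with $u\to 0$ at infinity, so that $\omega = \omega_{\mathrm{app}} + i\pa\db u$ is the desired Calabi-Yau metric. For this I would deploy $\QAC$-weighted H\"older spaces and the Fredholm theory of the $\QFB$-Laplacian $\Delta_{\omega_{\mathrm{app}}}$ from the authors' previous work, combining linear mapping properties with a fixed-point or continuity argument to convert the decay of $f$ into a unique decaying solution $u$. The main obstacle I anticipate is the analysis at the $V_{0,k}$-boundary face, where the singular cross-section of the tangent cone rules out direct use of the smooth asymptotically conical theory: one must compute the indicial operator of $\Delta_{\omega_{\mathrm{app}}}$ at this face, show its indicial roots avoid the chosen weight, and arrange the ansatz so that $f$ decays strictly faster than the obstructing indicial bound. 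Once $u$ is produced with the required decay, the tangent cone of $\omega$ at infinity agrees with that of $\omega_{\mathrm{app}}$ and hence equals $\bbC\times V_{0,k}$; since this cone has singular cross-section while the flat metric on $\bbC^n$ has smooth cross-section $S^{2n-1}$, the resulting Calabi-Yau metric cannot be isometric to the Euclidean one.
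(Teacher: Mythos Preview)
Your overall architecture is correct and matches the paper: realize $\bbC^n$ as a smoothing of $\bbC\times V_{0,k}$, build an approximate K\"ahler potential from the fibrewise AC Calabi-Yau metric on $V_{\epsilon,k}$ warped over the $\bbC$-base (the Hein--Naber ansatz), compactify to a manifold with corners with two boundary hypersurfaces, and perturb to Ricci-flat by solving the Monge--Amp\`ere equation via a continuity method.  The tangent-cone argument at the end is also the right one.

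There is, however, a genuine gap in your linear analysis plan.  The approximate metric is \emph{not} $\QAC$ in the sense of Degeratu--Mazzeo or of the earlier $\QFB$ work: near the $\cH_1$ face it is a \emph{warped} product $g_{\bbC}+|z_{n+1}|^{\frac{2(n-k)}{k(n-1)}}g_{V_{\epsilon,k}}$ rather than a Cartesian product of AC-metrics, and the paper is explicit that this forces one to introduce a new class, the \emph{warped $\QAC$-metrics}, related to $\Qb$-metrics by a different conformal factor $\chi$ than in the $\QAC$ case.  Consequently the existing $\QFB$-Fredholm/indicial package does not apply off the shelf, and your proposed step ``verify that $\omega_{\mathrm{app}}$ is $\QAC$ and invoke the $\QFB$-Laplacian Fredholm theory'' would fail as written.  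The paper instead obtains the needed isomorphism of $\Delta$ on doubly weighted $\Qb$-H\"older and Sobolev spaces by the Grigor'yan--Saloff-Coste route used by Degeratu--Mazzeo: prove weighted volume doubling and a uniform Poincar\'e inequality for the weighted measure $\rho^a w^b\,dg_w$, deduce Gaussian heat-kernel bounds and hence Green's function bounds, and use the Schur test.  This heat-kernel approach replaces the indicial-root computation you anticipate.  A second point you miss is that for $k=2$ and $n\ge 4$ the Ricci potential of the ansatz does not decay fast enough to fit into the admissible weight window; the paper has to first knock down the expansion at the $\cH_2$ face using the Laplacian of an incomplete edge metric on the singular cross-section (Appendix~A) before the continuity argument can be run.
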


\begin{remark}
After our preprint was posted, G\'abor Székelyhidi posted a preprint \cite{Szekelyhidi} where he recovers our results using different methods. 
\end{remark}

Since the tangent cones of the Calabi-Yau metrics in Theorem \ref{main} all have singular cross-section, the question remains as to whether $\mathbb{C}^{n}$ (or indeed, any asymptotically conical Calabi-Yau manifold) admits another Calabi-Yau metric with Euclidean volume growth with a different tangent cone at infinity also having a smooth cross-section.

\subsection{Outline of the proof of Theorem \ref{main}}

The general philosophy for constructing Calabi-Yau manifolds with a prescribed tangent cone at infinity is to take an affine $\bbC^*$-equivariant deformation of the cone preserving the asymptotics at infinity followed by a K\"ahler crepant resolution, then construct an asymptotically Calabi-Yau metric on the resulting K\"ahler manifold, and finally perturb this metric to a Calabi-Yau metric by solving the complex Monge-Amp\`ere equation. Here we work on a deformation of the cone $\mathbb{C}\times V_{0,\,k}$ that yields $\mathbb{C}^{n}$ and our construction of an asymptotic Calabi-Yau metric is strongly inspired by an ansatz of Hans-Joachim Hein and Aaron Naber in their unpublished work \cite{Naber}.

Let $P_k(z_1,\ldots,z_n)$ be a homogeneous polynomial of degree $k$ for $2\le k \le n-1$ such that the hypersurface
$$
       F_k:= \left\{ [0:z_1: \ldots : z_n] \in \bbC\bbP^{n-1}\subset \bbC\bbP^{n} \; |  \; P_k(z_1,\ldots,z_n)=0 \right\}
$$
is smooth and admits a Kähler-Einstein metric.  By \cite{Tian1987, Nadel, Arezzo}, we can take for instance $F_k$ to be a Fermat hypersurface or a smooth hypersurface sufficiently close to a Fermat hypersurface.  On $\bbC^{n}$, we consider the singular affine variety
$$
     V_{0,k}:= \{ (z_1,\ldots,z_n)\in \bbC^n \; | \; P_k(z_1,\ldots,z_n) =0\},
$$
which by the Calabi ansatz admits a Calabi-Yau cone metric $g_{V_{0,k}}$.  In terms of this Calabi-Yau cone, the prescribed tangent cone at infinity of our examples is the Cartesian product $( V_{0,k}\times \bbC, g_{V_{0,k}}\times g_{\bbC})$, where $g_{\bbC}$ is the Euclidean metric on $\bbC$.   We then consider the smoothing $W_{\epsilon,k}$ of $V_{0,k}\times \bbC$ given by the equation
$$
     P_k(z_1,\ldots,z_n)=\epsilon z_{n+1}
$$
for $\epsilon\ne 0$.  This smoothing, being the graph of the polynomial $\epsilon^{-1}P_k(z_1,\ldots,z_n)$, is biholomorphic to $\bbC^n$.  Our strategy then comprises first in constructing examples of Kähler metrics on $W_{\epsilon,k}$ that are  modelled on $g_{\bbC}\times g_{V_{0,k}}$ at infinity. To do this, we compactify $W_{\epsilon,k}$ in a suitable way by a manifold with corners $\cW_{\epsilon,k}$ having two boundary hypersurfaces $\cH_1$ and $\cH_2$ describing the two types of behavior of the metric at infinity.  At $\cH_2$, the metric behaves like an asymptotically conical metric ($\AC$-metric for short), whereas near $\cH_1$, it is modelled on a \textbf{warped} product of $\AC$-metrics,
$$
   g_{\bbC} + |z_{n+1}|^{\frac{2(n-k)}{k(n-1)}} g_{V_{\epsilon,k}},
$$
where $V_{\epsilon,k}$ is the smoothing of $V_{0,k}$ defined by the equation
$$
   P_{k}(z_1,\ldots,z_n)=\epsilon
$$
and $g_{V_{\epsilon,k}}$ can be chosen to be a Calabi-Yau AC-metric on $V_{\epsilon,k}$.  Notice that if we instead consider a \textbf{Cartesian} product of $\AC$-metrics, then this would correspond to a special case of the quasi-asymptotically conical metrics ($\QAC$-metrics for short) introduced by Degeratu and Mazzeo \cite{DM2014}.  For this reason, we call the metrics we consider in the present paper \textbf{warped} $\QAC$-metrics.   One other common point with $\QAC$-metrics to highlight is that a warped $\QAC$-metric is conformal to  a $\Qb$-metric as defined in \cite[Definition~{1.29}]{CDR2016}, although with a different conformal factor, a useful fact that yields a simple coordinate free definition of warped $\QAC$-metrics; see Definition~\ref{qb.11} below.

For this class of metrics, we first construct examples that are Kähler with Ricci potential decaying at infinity.  To apply the work of Tian-Yau \cite{Tian-Yau1991} and get a Calabi-Yau metric, we need however to improve the decay of the Ricci potential at infinity.  To do this, we need to derive good mapping properties of the Laplacian for these metrics on suitable weighted H\"older spaces, more precisely  the same Hölder spaces as in \cite{Joyce, DM2014, CDR2016}, namely those associated to a $\Qb$-metric, but with different weights.  As in \cite{DM2014} for $\QAC$-metrics, we do this by deriving estimates for the heat kernel and the Green's function using the work of Grigor'yan and Saloff-Coste \cite{GS2005}.  With these mapping properties, we can then improve the decay of the Ricci potential at infinity, which ultimately allows us to solve a corresponding complex Monge-Ampère equation and obtain our new examples of complete Calabi-Yau metrics.

\subsection{Overview} The paper is organized as follows.  In \S~\ref{ac.0}, we recall the results that we need concerning Calabi-Yau $\AC$-metrics.  In \S~\ref{w.0}, we introduce the compactification of $W_{\epsilon,k}$ by a manifold with corners, the class of warped $\QAC$-metrics and the various functional spaces that we require.  In \S~\ref{acy.0}, we construct examples of Kähler warped $\QAC$-metrics with Ricci potential decaying at infinity.  In \S~\ref{mp.0}, we show that the Laplacian of a warped $\QAC$-metric is an isomorphism when acting on suitable weighted Hölder spaces.  In \S~\ref{cy.0}, we use these properties to solve a complex Monge-Ampère equation and obtain our new examples of Calabi-Yau metrics on $\bbC^n$ for $n\ge 3$.  Finally, in \S~\ref{k2.0}, we treat the case $k=2$ and $n\ge 4$ separately as it requires further work.  In particular, for this last part, we need to invoke some results about the mapping properties of the Laplacian of an incomplete edge metric which are discussed separately in Appendix A.

\begin{Acknowledgement}
Our construction of an asymptotically Calabi-Yau metric originates from an ansatz of Hans-Joachim Hein and Aaron Naber in their unpublished work \cite{Naber}.  The authors wish to thank Hans-Joachim for many helpful conversations and G\'abor Székelyhidi for helpful email exchanges.  The second author was supported by NSERC and a Canada Research Chair.
\end{Acknowledgement}

\section{Asymptotically conical Calabi-Yau metrics via smoothing} \label{ac.0}

Let $F_k\subset \bbC\bbP^{n-1}\subset \bbC\bbP^n$ be a smooth hypersurface of degree $k$ in $\bbC \bbP^{n-1}$ given by
\begin{equation}
  F_k= \{ [0:z_1:\ldots: z_n]\in \bbC\bbP^{n-1}\subset \bbC\bbP^n \; | \; P_k(z_1,\ldots,z_n)=0 \},
\label{ac.3}\end{equation}
where $P_k$ is a homogeneous polynomial of degree $k$ such that
\begin{equation}
  2 \le k \le n-1.
\label{ac.6}\end{equation}
 Using the adjunction formula for $F_k\subset \bbC\bbP^{n-1}$, we see that $K_{F_k}= \cO(-n+k)$, so that $c_1(F_k)>0$ and $F_k$ is Fano.  Assume furthermore that    
 $F_k$ admits a Kähler-Einstein metric $g_{F_k}$.  
 \begin{example}
 We may take 
 $$
 P_k(z_1,\ldots,z_n)= \sum_{i=1}^n z_i^k,
 $$
 since then $F_k$ is a Fermat hypersurface and we know from the work of Tian \cite{Tian1987} if $k=n-1,n-2$, the work of Nadel \cite{Nadel} if $\frac{n-1}{2}\le k\le n-3$ and by \cite{Arezzo} for all the remaining $k$ satisfying \eqref{ac.6} that $F_k$ admits a Kähler Einstein metric.  More generally, since being Kähler-Einstein is an open condition on the moduli space of smooth hypersurfaces of degree $k$, it suffices to take $F_k$ to be sufficiently close to a Fermat hypersurface.   
 \label{ex.1}\end{example}
 \begin{example}
 We can take a smooth hypersurface $F_k$ of the form \eqref{ac.3} with 
 $$
      P_k(z_1,\ldots,z_n)= \left(\sum_{i=1}^{\ell-1} z_i^k \right) + Q_k(z_{\ell},\ldots,z_n),
 $$
 where $Q_k$ is a homogeneous polynomial of degree $k$ and $\ell\in \bbN$ is chosen so that $\ell>n+1-k$, since then we know from \cite[Proposition~3.1]{Arezzo} that $F_k$ admits a Kähler Einstein metric.  Again, since being Kähler-Einstein is an open condition, it suffices to take $F_k$ sufficiently close to such a hypersurface.  
 \label{ex.2}\end{example}
 
Consider then the  affine variety $V_{0,k}$ in $\bbC^n$ defined by 
\begin{equation}
      V_{0,k}:= \{  (z_1,\ldots, z_n)\in \bbC^n \; | \; P_k(z_1,\ldots,z_n)=0\}.
\label{ac.1}\end{equation}
It is in fact quasi-projective since $V_{0,k}= \bV_{0,k}\setminus F_k$ where $\bbP V_{0,k}$ is the projective variety given by the closure of $V_{0,k}$ in $\bbC\bbP^n$,
\begin{equation}
 \bbP V_{0,k}= \{ [z_0:z_1:\ldots:z_n]\in \bbC\bbP^{n} \; | \; P_k(z_1,\ldots,z_n) =0\}.
\label{ac.2}\end{equation}
Since the normal bundle $N_{F_k}$ of $F_k$ in $\bbP V_{0,k}$ is $\cO(1)$, this means that
\begin{equation}
      K_{F_k}= N_{F_k}^{\otimes (k-n)}. 
\label{ac.4}\end{equation}
Moreover, using the natural $\bbC^*$-action on $V_{0,k}$, notice that there is a canonical identification 
\begin{equation}
         V_{0,k}\setminus \{0\} = N_{F_k}^*\setminus \{F_k\}.
\label{ac.5}\end{equation}
Using the Kähler-Einstein metric $g_{F_k}$ on $F_k$, we can therefore apply the Calabi ansatz as in \cite{Calabi, LeBrun} to construct a Calabi-Yau cone metric $g_{V_{0,k}}$ on $V_{0,k}$ with Kähler form given explicitly by
$$
    \omega_{V_{0,k}}= \frac{\sqrt{-1}}2 \pa \db \| \cdot \|^{\frac{2(n-k)}{n-1}}_{N^*_{F_k}}, 
$$
where the norm on $N^*_{F_k}$, which is a $(n-k)$th root of $K_{F_k}$, is the one induced by the Kähler-Einstein metric $g_{F_k}$. In particular, the corresponding radial function is given by 
\begin{equation}
r:= \| \cdot \|^{\frac{(n-k)}{n-1}}_{N^*_{F_k}} \quad \Longrightarrow \quad r\asymp |z|^{\frac{n-k}{n-1}} \quad \mbox{on}\; V_{0,k},
\label{ac.6b}\end{equation}
where $|z|^2= \sum_{i=1}^n |z_i|^2$ and the notation $f_1\asymp f_2$ means that there exist positive constants $c$ and $C$ such that $cf_2\le f_1\le Cf_2$. The fact that $g_{V_{0,k}}$ is Calabi-Yau means more specifically that there is a constant $c_{n-1}$ such that 
$$
    \omega_{0,k}^{n-1}= c_{n-1}\Omega^{n-1}_0\wedge \overline{\Omega}^{n-1}_0,
$$ 
where $\Omega_0^{n-1}$ is the holomorphic volume form on $V_{0,k}$ defined implicitly by 
$$
       \left. dz_1\wedge\ldots \wedge dz_n \right|_{V_{0,k}}= \left. \Omega^{n-1}_0\wedge d\left( P_k(z_1,\ldots,z_n) \right)\right|_{V_{0,k}}.
$$
Now, for $\epsilon\ne 0$, we can deform $V_{0,k}$ to the smooth affine hypersurface $V_{\epsilon,k}\subset \bbC^n$ defined by the equation
\begin{equation}
         P_k(z_1,\ldots,z_n)= \epsilon.
\label{ac.7}\end{equation}
Again, $V_{\epsilon,k}$ is a quasi-projective variety, since $V_{\epsilon,k}= \bbP V_{\epsilon,k}\setminus F_k$ where 
$$
   \bbP V_{\epsilon,k}:= \{ [z_0: z_1:\ldots:z_n]\in \bbC\bbP^n \;| \; P_k(z_1,\ldots,z_n)= \epsilon z_0^k\}.
$$
By the adjunction formula, $K_{\bbP V_{\epsilon,k}}= \cO(-n-1+k)$, so that if we denote also by $N_{F_k}$ the normal bundle of $F_k$ in $\bbP V_{\epsilon,k}$, we have that  $N_{F_k}= \cO(1)$ and that 
$$
       -K_{\bbP V_{\epsilon,k}}= N_{F_k}^{\otimes n+1-k}.
$$
On $\bbP V_{\epsilon,k}$, there is also a natural holomorphic volume form $\Omega^{n-1}_{\epsilon}$ given implicitly by
$$
 \left. dz_1\wedge\ldots \wedge dz_n \right|_{V_{\epsilon,k}}= \left. \Omega^{n-1}_{\epsilon}\wedge d\left( P_k(z_1,\ldots,z_n)-\epsilon \right)\right|_{V_{\epsilon,k}}.
$$
This is a setting where the result of Tian-Yau \cite{Tian-Yau1991} applies.  We will use the result in a version adapted to $\AC$-metrics as  in \cite{Joyce, vanC, vanC2011, Goto,CH2013,CH2015}.  
  
Let $\overline{\bbC^n}$ denote the radial compactification of $\bbC^n$.  Notice that this radial compactification can be obtained from $\bbC\bbP^{n}$ by blowing up the divisor at infinity $\bbC\bbP^{n-1}$ in the sense of Melrose \cite{MelroseAPS}, so that there is a blow-down map $\beta_n:\overline{\bbC^{n}}\to \bbC\bbP^n$.  Let $\bV_{\epsilon,k}:= \beta_n^{-1}(\bbP V_{\epsilon,k})$ be the closure of $V_{\epsilon,k}$ in $\overline{\bbC^n}$.  Let $\bU$  be an open neighborhood of $\beta_n^{-1}(F_k)$   in $\overline{\bbC^n}$ not intersecting the origin in $\bbC^n$ and set $\bU_{\epsilon}= \bU\cap \bV_{\epsilon,k}$. Since for $\epsilon\ne 0$, $\bbP V_{\epsilon,k}$ and $\bbP V_{0,k}$ are tangent to order $k$ on their intersection on $F_k$, we see, \cf \cite[Proposition~1.3(2) and Remark~5.7]{ChiLi}, that there exists a smooth diffeomorphism $\varphi_{\epsilon}: \bU_{0} \to \bU_{\epsilon}$ such that if $J_{\epsilon}$ denotes the complex structure of $\bbP V_{\epsilon,k}$ and its lift to ${}^{b}T\bV_{\epsilon,k}$, then
\begin{equation}
  \varphi^*_{\epsilon}J_{\epsilon}- J_0\in |z|^{-k}\CI(\bU_0; \End({}^{b}T^*\bU_0))\subset |z|^{-k}\CI_b(\cU_0; \End({}^{b}T\bU_0)),
\label{ac.8}\end{equation}
where $\cU_0= \bU_0\cap V_{0,k}$,${}^{b}T\bU_0$ is the  $b$-tangent bundle as introduced in \cite{MelroseAPS} and $\CI_b(\cU_0; \End({}^{b}T^*\bU_0))$ is the space of smooth bounded conormal sections as defined in \cite{CMR2015}.  Similarly, we have that 
\begin{equation}
   \log\left(\frac{\varphi_{\epsilon}^* \Omega^{n-1}_{\epsilon}}{\Omega^{n-1}_0}\right)\in |z|^{-k}\CI(\bU_0)\subset |z|^{-k}\CI_b(\cU_0).
\label{ac.9}\end{equation}
Given a manifold with corners $M$, $F\to M$ a smooth Euclidean vector bundle and $\cE$ an index family associated to the boundary hypersurfaces of $M$, recall from \cite{Melrose1992, MelroseAPS} that $\cA_{\phg}^{\cE}(M;F)$ denotes the space of bounded smooth sections of $F$ on $M\setminus \pa M$ that admit a polyhomogeneous expansion at each boundary hypersurface with respect to the index family $\cE$.  In this paper, we will mostly be concerned with such sections that are bounded, regardless of which index family $\cE$ describes the polyhomogeneous expansion.  Thus, we will denote by $\cA_{\phg}(M;E)$ the space of bounded smooth sections on $M\setminus \pa M$ that admit a polyhomogeneous expansion at each boundary hypersurface with respect to some index family.  In particular, we have that $\CI(M;E)\subset \cA_{\phg}(M;E)$.
\begin{remark}
Since we are using the $b$-tangent bundle instead of the scattering tangent bundle of \cite{MelroseGST} given formally by ${}^{\sc}T\bU_0= \frac{1}{r}{}^b T\bU_0$, we have that
$$
 \varphi_{\epsilon}^* \Omega^{n-1}_{\epsilon}, \Omega^{n-1}_0\in |z|^{n-k}\CI(\bU_0; \Lambda^{n-1}_{\bbC}\left({}^{b}T^*\bU_0\otimes \bbC\right))=r^{n-1}\CI(\bU_0; \Lambda^{n-1}_{\bbC}\left({}^{b}T^*\bU_0\otimes \bbC\right)).
$$
Similarly, 
$$
      g_{V_{0,k}}\in r^2\CI_b(\bU_0; S^2({}^bT^*\bU_0)).
$$      
\label{ac.9b}\end{remark}

\begin{theorem}
For $2\le k\le n-1$, the affine variety $V_{\epsilon,k}$ admits a complete Calabi-Yau metric $g_{V_{\epsilon,k}}$ with Kähler form $\omega_{V_{\epsilon,k}}=\frac{\sqrt{-1}}2\pa\db v_{\epsilon}$ such that 
\begin{equation}
\begin{gathered}
        \varphi_{\epsilon}^*v_{\epsilon}-r^2\in |z|^{(-\mu+2)\frac{(n-k)}{n-1}}\cA_{\phg}(\overline{\cU_0})\subset |z|^{(-\mu+2)\frac{(n-k)}{n-1}}\CI_b(\cU_0), \\ 
         \varphi_{\epsilon}^{*} g_{V_{\epsilon,k}}- g_{V_{0,k}}\in |z|^{(-\mu+2)\frac{n-k}{n-1}} \cA_{\phg}(\overline{\cU}_0; {}^{b}T^*\cU_0\otimes {}^{b}T^*\cU_0)\subset |z|^{(-\mu+2)\frac{n-k}{n-1}} \CI_b(\cU_0; {}^{b}T^*\cU_0\otimes {}^{b}T^*\cU_0),  \\  
         \omega_{\epsilon,k}^{n-1}= c_{n-1}\Omega^{n-1}_{\epsilon}\wedge \overline{\Omega}^{n-1}_{\epsilon},
\end{gathered}         
\label{ac.9b}\end{equation}         
where 
\begin{equation}
   \mu= \left\{ \begin{array}{ll} k\frac{n-1}{n-k} & \mbox{if}\; k<\frac{2n}3, \\ 2n-2 & \mbox{if} \; k> \frac{2n}3,  \\ 2n-2-\delta \; & \mbox{if}\; k=\frac{2n}3 \end{array}\right.
\label{ac.10}\end{equation}
and $\delta>0$ can be taken arbitrarily small.  
\label{ac.11}\end{theorem}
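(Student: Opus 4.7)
The plan is to produce a background asymptotically conical Kähler form on $V_{\epsilon,k}$ whose Ricci potential decays polyhomogeneously at infinity, and then to invoke the AC version of the Tian-Yau existence theorem in the form developed in \cite{Tian-Yau1991, Joyce, vanC, vanC2011, Goto, CH2013, CH2015}. As a starting metric, I would transplant the Calabi cone potential $r^2$ from $\cU_0$ to the end $\cU_\epsilon$ of $V_{\epsilon,k}$ using $(\varphi_\epsilon^{-1})^*$ and extend it across the compact core of $V_{\epsilon,k}$ as a smooth strictly plurisubharmonic function $v_0$; the asymptotic estimate \eqref{ac.8} ensures that $\omega_0 := \frac{\sqrt{-1}}{2}\pa\db v_0$ is close to the pushforward of $\omega_{V_{0,k}}$ modulo an error polyhomogeneous in $|z|$ of order at most $|z|^{-k}$.

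Next, defining $f$ by $\omega_0^{n-1} = e^f c_{n-1}\Omega^{n-1}_\epsilon\wedge\overline{\Omega}^{n-1}_\epsilon$, both the change in complex structure from \eqref{ac.8} and the volume-form discrepancy from \eqref{ac.9} contribute at order $|z|^{-k}$, yielding $f \in |z|^{-k}\cA_{\phg}(\overline{\cU}_0)$, equivalently $f = O(r^{-k(n-1)/(n-k)})$ in the conical radial coordinate $r\asymp |z|^{(n-k)/(n-1)}$ from \eqref{ac.6b}. Since $V_{\epsilon,k}$ is complete with Euclidean volume growth, with nonnegative Ricci curvature at infinity modulo a decaying error, the hypotheses of the AC Tian-Yau theorem are verified, and solving
\begin{equation*}
\left(\omega_0 + \tfrac{\sqrt{-1}}{2}\pa\db u\right)^{n-1} = e^f \omega_0^{n-1}
\end{equation*}
produces the Calabi-Yau form $\omega_{V_{\epsilon,k}} = \omega_0 + \frac{\sqrt{-1}}{2}\pa\db u$ with the volume identity in \eqref{ac.9b}.

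The principal obstacle is extracting the sharp decay exponent $\mu$ in \eqref{ac.10}. This requires the Fredholm theory of the cone Laplacian on weighted polyhomogeneous spaces together with knowledge of its indicial roots, which are determined by the Laplace spectrum on the link of $V_{0,k}$. On a Calabi-Yau cone of real dimension $2n-2$, the Green's-function-type indicial root saturates at metric decay rate $r^{-(2n-2)}$, which corresponds to the potential-level rate $\mu = 2n-2$. Comparing the forcing rate $k(n-1)/(n-k)$ with this critical rate identifies the threshold $k = 2n/3$: when $k < 2n/3$ the forcing decays more slowly than the indicial saturation, and a particular solution at the forcing rate gives $\mu = k(n-1)/(n-k)$; when $k > 2n/3$ the forcing is faster than the indicial saturation and the solution acquires the maximal rate $\mu = 2n-2$; at the borderline $k = 2n/3$ the resonance produces a $\log r$ term, absorbed into an arbitrarily small loss $\delta > 0$. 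The full polyhomogeneous expansion claimed in \eqref{ac.9b} is then obtained by iterating this linear Fredholm solution on successively refined weighted spaces, following the scheme of the AC Calabi-Yau references cited above.
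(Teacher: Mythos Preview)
Your approach is essentially the same as the paper's: transplant $r^2$ via $\varphi_\epsilon$, observe from \eqref{ac.8}--\eqref{ac.9} that the Ricci potential decays like $r^{-k(n-1)/(n-k)}$, and invoke the AC Tian--Yau package (the paper simply cites \cite[Theorem~2.1]{CH2013} for the value of $\mu$ and \cite[\S~5]{CMR2015} for polyhomogeneity rather than re-deriving the indicial-root dichotomy you sketch). One minor slip: with your sign convention for $f$ the Monge--Amp\`ere equation should read $(\omega_0 + \tfrac{\sqrt{-1}}{2}\pa\db u)^{n-1} = e^{-f}\omega_0^{n-1}$, not $e^{f}$.
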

\begin{proof}
Recall the definition of the radial function $r$ in \eqref{ac.6b}.  Clearly, restricting the Euclidean metric on $\bbC^n$ to $V_{\epsilon,k}$ and using a standard convexity argument as in for instance \cite[Lemma~4.1]{CDR2016} or the proof of Corollary~\ref{w.31} below, there exists a potential $v\in \CI(V_{0,k})$ such that $\varphi_{\epsilon}^*v=r^2$ outside some compact set and $\frac{\sqrt{-1}}2 \pa \db v$ is the  Kähler form of an $\AC$-metric on $V_{\epsilon,k}$.  Thanks to \eqref{ac.8} and \eqref{ac.9}, the Ricci potential
$$
 r_v:= \log \left( \frac{(\frac{\sqrt{-1}}{2}\pa\db v)^{n-1}}{c_{n-1}\Omega_{\epsilon}^{n-1}\wedge\overline{\Omega}^{n-1}_{\epsilon}} \right)
$$
is in $|z|^{-k}\CI_b(V_{\epsilon,k})= r^{-k\frac{n-1}{n-k}}\CI_b(V_{\epsilon,k})$.  Hence, by \cite[Theorem~2.1]{CH2013}, the complex Monge-Ampère equation
$$
         \log\left(  \frac{(\frac{\sqrt{-1}}{2}\pa\db v+ \frac{\sqrt{-1}}{2}\pa\db w)^{n-1}}{c_{n-1}\Omega_{\epsilon}^{n-1}\wedge\overline{\Omega}^{n-1}_{\epsilon}}\right)=-r_v
$$
has a unique solution 
\begin{equation}
w\in r^{-\mu+2}\CI_b(V_{\epsilon,k}).
\label{ac.12}\end{equation} 
Hence, it suffices to take $v_{\epsilon}= v+w$.  Notice moreover that we know from \cite[\S~5]{CMR2015} that $v_{\epsilon}$ admits a polyhomogeneous expansion.  

\end{proof}

\begin{remark}
The equation \eqref{ac.11} shows that the metric $g_{V_{\epsilon,k}}$ is asymptotic to the cone metric $g_{V_{0,k}}$ with rate $r^{-\mu}$, so that it is an $\AC$-metric  with tangent cone $V_{0,k}$, \cf \cite[Definition~1.11]{CH2013}.
\label{ac.13}\end{remark}
\begin{remark}
When $k=\frac{2n}3$, thanks to \cite[\S~5]{CMR2015}, we can be more specific and say that 
$$
 \varphi_{\epsilon}^*v_{\epsilon}-r^2- f |z|^{(4-2n)\frac{(n-k)}{n-1}}\log|z|\in |z|^{(4-2n)\frac{(n-k)}{n-1}}\cA_{\phg}(\overline{\cU_0})
$$
for some function $f\in \CI(\bU_0)$.  That is, the $\delta>0$ in \eqref{ac.10} is due to the possible presence of a  logarithmic term in the polyhomogeneous expansion  of $\varphi_{\epsilon}^*v_{\epsilon}-r^2$.
\label{log.1}\end{remark}

\section{Warped $\QAC$-metrics on $\bbC^n$} \label{w.0}

On $\bbC^{n+1}$, consider the singular affine hypersurface  $W_{0,k}$ of degree $k$ defined by 
\begin{equation}
           W_{0,k}:= \{(z_1,\ldots,z_n,z_{n+1})\in \bbC^{n+1}\; |\;  P_k(z_1,\ldots,z_n)=0 \},
\label{w.1}\end{equation}
where $P_k$ is the homogeneous polynomial of \S~\ref{ac.0}. 
Since the defining equation of $W_{0,k}$ does not involve the variable $z_{n+1}$, $W_{0,k}$ is canonically isomorphic to the Cartesian product $V_{0,k}\times \bbC$.  Hence, under this identification, we can consider the product metric
\begin{equation}
  g_{W_{0,k}}:= g_{V_{0,k}} + g_{\bbC},
\label{w.2}\end{equation}
where $g_{\bbC}$ is the canonical Euclidean metric on $\bbC$.  Obviously $g_{W_{0,k}}$ and $W_{0,k}$ are singular at $z=0$, where $z=(z_1,\ldots,z_n)$ involves only the first $n$ variables.  Otherwise, away from $z=0$, $g_{W_{0,k}}$ is a Cartesian product of $\AC$-metrics, and as such, is an example of  a $\QAC$-metric; see \cite[\S2.3.5]{DM2014} or \cite[Example~1.22]{CDR2016}.

To remove the singularities, we can consider, for $\epsilon\ne 0$, the smooth deformation $W_{\epsilon,k}$ defined by 
\begin{equation}
  W_{\epsilon,k}:= \{ (z_1,\ldots, z_{n+1})\in \bbC^{n+1}\; | \; P_k(z_1,\ldots,z_n)= \epsilon z_{n+1}\}.
\label{w.3}\end{equation}
Clearly, this affine hypersurface is biholomorphic to the graph of the holomorphic function 
$$
      \begin{array}{llcl}
         f: &\bbC^n & \to & \bbC \\
            & z &\mapsto & \epsilon^{-1}P_k(z_1,\ldots,z_n),
      \end{array}
$$
which is itself biholomorphic to $\bbC^n$.  Obviously then, a natural Calabi-Yau metric to consider on $W_{\epsilon,k}$ is the Euclidean metric coming from the identification $W_{\epsilon,k}\cong \bbC^n$.  However, using the fact that $W_{\epsilon,k}$ is close to $W_{0,k}$ for $\epsilon$ small suggests that on $W_{\epsilon,k}$, there might be as well a Calabi-Yau metric on $W_{\epsilon,k}$ which at infinity behaves more like $g_{W_{0,k}}$.  

To make this statement rigorous, recall first that the radial function $r$ of $g_{V_{0,k}}$ behaves like $|z|^{\frac{n-k}{n-1}}$ at infinity, while the radial function on $g_{\bbC}$ is $|z_{n+1}|$.  Thus, with respect to the metric $g_{W_{0,k}}$, this means that the natural compactification of $\bbC^{n+1}$ we should consider is not the usual radial compactification, but instead the weighted radial compactification $\overline{\bbC^{n+1}_a}$ associated to the     
weighted $\bbC^*$-action 
\begin{equation}
   \lambda \cdot (z_1,\ldots, z_n,z_{n+1}) = (\lambda^{n-1}z_1, \ldots, \lambda^{n-1}z_n, \lambda^{n-k}z_{n+1})
\label{w.4}\end{equation}
corresponding to the weight $a:=(n-1,\ldots,n-1,n-k)\in \bbN^{n+1}$.  To give the precise definition of the manifold with boundary $\overline{\bbC^{n+1}_a}$, we first need  to consider the weighted sphere
\begin{equation}
   \bbS^{2n+1}_a := \left\{ (z_1,\ldots,z_n,z_{n+1})\in \bbC^{n+1} \; | \; \left(\sum_{i=1}^n |z_i|^{2(n-k)}\right)+ |z_{n+1}|^{2(n-1)}=1\right\}.
\label{w.5}\end{equation}
Then $\overline{\bbC^{n+1}_a}$ is obtained by gluing $\bbS^{2n+1}_a\times [0,\infty)$ and $\bbC^{n+1}$ via the map    
\begin{equation}
\begin{array}{lccl}
   \nu: & \bbS^{2n+1}_a\times (0,\infty) & \to & \bbC^{n+1} \\
          & (\omega,\xi) & \mapsto & \left( \frac{\omega_1}{\xi^{n-1}},\ldots, \frac{\omega_n}{\xi^{n-1}}, \frac{\omega_{n+1}}{\xi^{n-k}}\right),
          \end{array}
\label{w.6}\end{equation}
which extends to give a tubular neighborhood $\nu: \bbS^{2n+1}_a\times [0,\infty)\to \overline{\bbC^{n+1}_a}$ of $\pa \overline{\bbC^{n+1}_a}= \bbS^{2n+1}_a$ in $\overline{\bbC^{n+1}_a}$.  In terms of the coordinates $(\omega,\xi)$ induced by this tubular neighborhood, we see that the defining equation of $W_{\epsilon,k}$ takes the form
\begin{equation}
  P_k(\omega_1,\ldots,\omega_n)= \epsilon \omega_{n+1} \xi^{n(k-1)}, \quad (\omega_1,\ldots, \omega_{n+1}) \in \bbS^{2n+1}_a\in \bbC^{n+1}.
\label{w.7}\end{equation}
Hence, the closure $\bW_{\epsilon,k}$ of $W_{\epsilon,k}$ in $\overline{\bbC^{n+1}_a}$ is obtained by considering the case $\xi=0$ in \eqref{w.7}, so that the 
defining equation of 
\begin{equation}
\pa\bW_{\epsilon,k}:=\bW_{\epsilon,k}\cap \pa \overline{\bbC^{n+1}_a}
\label{w.7b}\end{equation} 
is the zero locus of the equation
\begin{equation}
    P_k(\omega_1,\ldots,\omega_n)=0 \quad \mbox{in} \quad \bbS^{2n+1}_a.
\label{w.8}\end{equation}
In particular, $\pa\bW_{\epsilon,k}=\pa\bW_{0,k}$ is independent of $\epsilon$ and  is singular on the circle defined by the equation $|\omega_{n+1}|=1$ on $\bbS^{2n+1}_a$.  

Before discussing further this singularity, let us describe another natural compactification which behaves better with respect to the complex structure.  Notice first that the weighted $\bbC^*$-action  \eqref{w.4} restricts to a weighted $\bbS^1$-action on $\bbS^{2n+1}_a$.  This action is not faithful in general.  If we denote by $\ell$  the greatest common divisor of $n-1$ and $n-k$, then the subgroup $\bbZ_{\ell}\subset \bbS^1$ acts trivially on $\bbS^{2n+1}_a$.  Thus, we have a well-defined $\bbS^1/\bbZ_{\ell}$-action which is itself faithful.  The quotient of $\bbS^{2n+1}_a$ by this action is in fact the weighted projective space $\bbC\bbP^{n}_a$ associated to the weighted $\bbC^*$-action \eqref{w.4}.  It is a complex orbifold and the quotient map
\begin{equation}
 \xymatrix{  \bbS^1/\bbZ_{\ell} \ar[r] & \bbS^{2n+1}_a\ar[d]^{q} \\ & \bbC\bbP^n_a
 }          
\label{w.8a}\end{equation}
induces the structure of an $\bbS^1/\bbZ_{\ell}$-orbibundle over $\bbC\bbP^n_a$.  At the level of sets, the other natural compactification we want to consider is to add  $\bbC\bbP^{n}_a$ at infinity instead of $\bbS^{2n+1}_a$.  In complex geometric terms, what we want to consider is a weighted projective compactification compatible with the $\bbC^*$-action \eqref{w.4}.   More precisely, the compactification will be the weighted projective space $\bbC\bbP^{n+1}_{(1,a)}$ with weight 
$$
     (1,a)=(1,n-1,\ldots,n-1,n-k)\in \bbN^{n+2}.
$$
Recall for instance from \cite{Joyce} that this means that $\bbC\bbP^{n+1}_{(1,a)}$ is the quotient of $\bbC^{n+2}\setminus \{0\}$ with respect to the $\bbC^*$-action   
\begin{equation}
  \lambda\cdot (z_0,z_1,\ldots, z_n, z_{n+1}) = (\lambda z_0, \lambda^{n-1}z_1,\ldots, \lambda^{n-1}z_n, \lambda^{n-k}z_{n+1}),
\label{w.9}\end{equation}
and that, as for the usual projective space, we denote by $[z_0:z_1:\ldots: z_n: z_{n+1}]$ the point of $\bbC\bbP^{n+1}_{(1,a)}$ corresponding to the orbit of $(z_0,\ldots,z_{n+1})\in \bbC^{n+2}\setminus \{0\}$.  Notice that $\bbC^{n+1}$ is naturally included in $\bbC\bbP^{n+1}_{(1,a)}$ via the map 
$$
       \bbC^{n+1} \ni (z_1,\ldots, z_{n+1})\mapsto [1:z_1:\ldots : z_{n+1}]\in \bbC\bbP^{n+1}_{(1,a)}.
$$
Similarly, $\bbC\bbP^n_a$ is naturally included in $\bbC\bbP^{n+1}_{(1,a)}$ via the map
$$
\bbC\bbP^{n}_a \ni [z_1:\ldots: z_{n+1}]\mapsto [0:z_1:\ldots : z_{n+1}]\in \bbC\bbP^{n+1}_{(1,a)}.$$ 
Clearly, the singularities of $\bbC\bbP^{n+1}_{(1,a)}$ all lie on this $\bbC\bbP^n_a$ which we can   think of as located at infinity.  
If $\ell=1$, that is, if $n-1$ and $n-k$ are coprime, the orbifold singularities of $\bbC\bbP^{n+1}_{(1,a)}$ are given by
\begin{equation}
     \{[0: z_1:\ldots: z_n:0] \in \bbC\bbP^{n+1}_{(1,a)} \; | \; (z_1,\ldots,z_n)\in \bbC^n\setminus\{0\}\},
\label{w.10}\end{equation}
as well as the point $[0:\ldots:0:1]\in \bbC\bbP^{n+1}_{(1,a)}$ if $n-k>1$.  If instead $\ell>1$, then $\bbC\bbP^n_a$ itself is singular within $\bbC\bbP^{n+1}_{(1,a)}$ with singularity locally modelled on 
$$
      \bbC^n\times (\bbC/\bbZ_{\ell}) \quad \mbox{with $\bbZ_{\ell}$-action induced by multiplication}, 
$$  
except near  \eqref{w.10}, where it is modelled on 
$$
    \bbC^{n-1}\times (\bbC^2/\bbZ_{n-1}) \quad \mbox{with action generated by}\; e^{\frac{2\pi i}{n-1}}\cdot(z_0,z_{n+1})= (e^{\frac{2\pi i}{n-1}}z_{0},e^{\frac{2\pi i(n-k)}{n-1}}z_{n+1}),
$$
and near the point $[0:\ldots:0 :1]$, where it is modelled on 
$$
   \bbC^{n+1}/\bbZ_{n-k} \quad\mbox{with action generated by}\; e^{\frac{2\pi i}{n-k}}\cdot (z_0,\ldots,z_{n})= (e^{\frac{2\pi i}{n-k}}z_0,e^{\frac{2\pi i(n-1)}{n-k}}z_1,\ldots, e^{\frac{2\pi i(n-1)}{n-k}}z_n).
$$
Clearly, there is a natural map 
$$
   \beta: \overline{\bbC^{n+1}_a}\to \bbC\bbP^{n+1}_{(1,a)}
$$
which restricts to the identity on $\bbC^{n+1}$ and is given by $\beta(\omega,\xi)=[\frac{\omega_1}{\xi^{n-1}}:\ldots : \frac{\omega_n}{\xi^{n-1}}: \frac{\omega_{n+1}}{\xi^{n-k}}]$ in the coordinates $(\omega,\xi)$ of \eqref{w.6}.  In particular, $\left.\beta\right|_{\pa \overline{\bbC^{n+1}_a}}$ is just the quotient map \eqref{w.8a}, so forgetting about the orbifold singularities of $\bbC\bbP^n_a$ and $\bbC\bbP^{n+1}_{(1,a)}$, we can think of $\beta$ as a blow-down map with $\overline{\bbC^{n+1}_a}$ obtained from $\bbC\bbP^{n+1}_{(1,a)}$ by `blowing up $\bbC\bbP^n_a$ in the sense of Melrose' \cite{MelroseAPS}.  

 In $\bbC\bbP^{n+1}_{(1,a)}$, the closure $\bbP W_{\epsilon,k}$ of $W_{\epsilon,k}$ is defined by the equation
$$
        P_k(z_1,\ldots, z_n)= \epsilon z_{n+1}z_0^{n(k-1)} \quad \mbox{for} \; [z_0:\dots: z_{n+1}]\in \bbC\bbP^{n+1}_{(1,a)}.
$$   
Notice in particular that 
$$
   \bbP W_{\epsilon,k}\cap \bbC\bbP^n_a = \{ [0:z_1:\ldots: z_{n+1}]\in \bbC\bbP^n_a\; | \; P_k(z_1,\ldots,z_n)=0\}
$$
is independent of $\epsilon$ and is singular at the point $[0:\ldots: 0:1]$.

Let $S_1\subset \pa \overline{\bbC^{n+1}_a}$ be the circle along which  $\bW_{\epsilon,k}$ is singular.  To resolve this singularity, we need to blow up $S_1$ in $\overline{\bbC^{n+1}_a}$, but making a different choice of boundary defining function.  More precisely, let $\bX$ be the manifold with boundary which is $\overline{\bbC^{n+1}_a}$ as a topological space, but with space of smooth functions those of $\overline{\bbC^{n+1}_a}$ having a Taylor expansion at $\pa \overline{X}$ involving only integer powers of $\bx:=\xi^{\frac{n(k-1)}{k}}$ instead of $\xi$.  With this understood, we then consider the space $\tX:= [\bX;S_1]$ obtained by blowing up $S_1$ in the sense of Melrose \cite{MelroseAPS} with blow-down map
\begin{equation}
    \tbeta: \tX\to \bX.  
\label{ww.1}\end{equation}
Let $H_1:= \tbeta^{-1}(S_1)$ and $H_2:=\overline{\tbeta^{-1}(\pa\bX\setminus S_1)}$ be the corresponding boundary hypersurfaces.  Using the local coordinates  $(v_1,\ldots, v_n, \theta, \bx)$ near $S_1$ in $\bX$ given by
 $$
       v_i= \xi^{n-1}z_i,  \quad z_{n+1}= \frac{e^{i\theta}}{\xi^{n-k}},
 $$    
 we see that the equation defining  $\bW_{\epsilon,k}$ in $\bX$ is given by
 \begin{equation}
      P_k(v_1,\ldots,v_n)= \epsilon e^{i\theta}\xi^{n(k-1)}= \epsilon e^{i\theta} \bx^k.
 \label{para.1}\end{equation}
 On the other hand, $S_1$ is given by the equation $v_1=\ldots= v_n= \bx=0$, hence the blow-up of $S_1$ corresponds to introducing the coordinates $(\omega_1,\ldots, \omega_n, \omega_{n+1})\in \bbS^{2n}\subset \bbC^n\times \bbR$ and $r\in [0,\infty)$ such that 
 $$
         v_i= r\omega_i, \quad \bx= r\omega_{n+1}.
 $$
 In these coordinates, the closure $\tW_{\epsilon,k}$ of $W_{\epsilon,k}$ in $\tX$ is given by
 $$
   P_k(\omega_1,\ldots,\omega_n)= \epsilon e^{i\theta}\omega_{n+1}^k.
 $$
When we restrict this equation to the new boundary hypersurface $H_{1}$, that is, when we restrict to $r=0$, we obtain exactly the same equation.  In particular, for $\epsilon\ne 0$, the restriction of $\tW_{\epsilon,k}$ to $H_{1}$ is smooth.  
  
Alternatively, we can introduce adapted holomorphic coordinates near $S_1$.  Namely, fix $\theta_0\in \bbR$ and restrict to a region of the form 
\begin{equation}
 \mathcal{R}_{\theta_0}= \{ (z_1,\ldots,z_{n+1})\in \bbC^{n+1}\; | \; z_{n+1}\ne 0, \theta_0< \arg z_{n+1} < \theta_0+2\pi \},   
\label{w.11}\end{equation}
 so that $\log z_{n+1}$ and $z_{n+1}^{\frac{1}k}$ can be defined  holomorphically in that region.  On $\mathcal{R}_{\theta_0}$, one can then consider the holomorphic coordinates
 $\zeta_1,\ldots,\zeta_n, z_{n+1}$ defined by
 \begin{equation}
   \zeta_i:= \frac{z_i}{z_{n+1}^{\frac{1}k}}, \quad z_{n+1}.
 \label{w.12}\end{equation}
Setting $\zeta=(\zeta_1,\ldots,\zeta_n)$, notice that $\tx_2:= \langle \zeta \rangle^{-1}$ can be taken to be a boundary defining function of $H_{2}$ in $\tX$, where $\langle\zeta\rangle:=\sqrt{1+|\zeta|^2}$, so that if $\tx:= \tbeta^* \bx$, then $\tx_1:=\langle \zeta \rangle \tx$ is a boundary defining function of $H_{1}$.

In terms of these coordinates, $\tW_{\epsilon,k}$ is defined more simply by the equation
\begin{equation}
    P_k(\zeta_1,\ldots,\zeta_n)= \epsilon  
\label{w.13}\end{equation}
and does not degenerate when $|z_{n+1}|\to \infty$.  Along the branch locus $\arg z_{n+1}=\theta_0$, we can identify the hypersurface defined by \eqref{w.13} when $\arg z_{n+1}\searrow \theta_0$ with the one when $\arg z_{n+1}\nearrow \theta_0+2\pi$ via the automorphism of \eqref{w.13} given by
\begin{equation}
  \zeta_i \mapsto e^{\frac{2\pi \sqrt{-1}}{k}}\zeta_i.  
\label{w.13b}\end{equation}
For $z_{n+1}\ne 0$, we can then regard $W_{\epsilon,k}$ as a holomorphic family of hypersurfaces of degree $k$ over $\bbC\setminus \{0\}$ all isomorphic to \eqref{w.13}.  In particular, this gives $H_1\cap \tW_{\epsilon,k}$ the  natural structure of a fibre bundle
\begin{equation}
\xymatrix{  
   \overline{V}_{\epsilon,k} \ar[r] & H_1\cap\tW_{\epsilon,k} \ar[d]^{\phi_1}\\
            & S_1.
}
\label{w.13c}\end{equation}

On the other hand, the Kähler form of the metric $g_{W_{0,k}}$ is given by
\begin{equation}
  \omega_{W_{0,k}}= \frac{\sqrt{-1}}2 \pa \db u_{0,k} \quad \mbox{with} \; u_{0,k}= |z_{n+1}|^2 + \| z \|^{\frac{2(n-k)}{n-1}}_{N^*_{F_k}}, \; z=(z_1,\ldots,z_n).
\label{w.14}\end{equation}
Since the metric is singular when $z=0$, this metric is problematic near $H_1$.  
On the other hand, near $H_2$, but away from $H_1$, notice that it is in fact a conical metric with radial function given by 
$$
     R= \sqrt{|z_{n+1}|^2+ \|z \|_{N^*_{F_k}}^{\frac{2(n-k)}{n-1}}}\asymp \frac{1}{\xi^{n-k}},
$$
so in particular it behaves like an $\AC$-metric near $H_2$.  

Now, by the homogeneity of the norm $\| \cdot \|_{N^*_{F_k}}$, in the coordinates $(\zeta_1,\ldots, \zeta_n,z_{n+1})$, the potential $u_{0,k}$ is given by
\begin{equation}
 u_{0,k}= |z_{n+1}|^2 + |z_{n+1}|^{\frac{2(n-k)}{k(n-1)}} \| \zeta\|^{\frac{2(n-k)}{n-1}}_{N^*_{F_k}}, \quad \zeta:=(\zeta_1,\ldots, \zeta_n).
\label{w.15}\end{equation}
To construct a corresponding metric on $W_{\epsilon,k}$ for $\epsilon \ne 0$, this suggests to consider instead the potential 
\begin{equation}
   u_{\epsilon,k}:= |z_{n+1}|^2 + |z_{n+1}|^{\frac{2(n-k)}{k(n-1)}} \phi_{\epsilon,k}(\zeta),
\label{w.16}\end{equation}
where $\phi_{\epsilon,k}$ is the potential $v+w$ given in Theorem~\ref{ac.11} so that $\frac{\sqrt{-1}}2\pa\db\phi_{\epsilon,k}(\zeta)$ is the Kähler form of the Calabi-Yau $\AC$-metric $g_{V_{\epsilon,k}}$ on the affine hypersurface defined by \eqref{w.13}.  By the uniqueness of the potential $\phi_{\epsilon,k}$, notice that the automorphism \eqref{w.13b} is in fact an isometry of the Calabi-Yau $\AC$-metric $g_{V_{\epsilon,k}}$.  
\begin{remark}
Hence, the $\AC$-metric $g_{V_{\epsilon,k}}$ unambiguously defines a family of fibrewise $\AC$-metrics in the fibre bundle \eqref{w.13c}.
\label{w.16b}\end{remark}
Before studying the potential \eqref{w.16} in more detail, we need to introduce the class of metrics we wish to study.  We first need to recall the notion of $\Qb$-metrics introduced in \cite{CDR2016}.  To do this, we need to first change the smooth structure on $\tX$, so let $\cX$ be the manifold with corners which is $\tX$ as a topological space, but with algebra of smooth functions given by the smooth functions on the interior of $\tX$ admitting Taylor expansions at $H_1$ and $H_2$ in integer powers of $x_1:= \tx_1^{\frac{n-k}{n-1}}$ and $x_2:= \tx_2^{\frac{n-k}{n-1}}$.  We denote by $\cH_1$ and $\cH_2$ the boundary hypersurfaces of $\cX$ and by $x:= \tx^{\frac{n-k}{n-1}}=x_1 x_2$ the total boundary defining function.  Notice that $\cH_i$ is the same as $H_i$ as a topological space but has a different algebra of smooth functions.  We denote by $\cW_{\epsilon,k}$ the closure of $W_{\epsilon,k}$ in $\cX$.  Recall from \cite{Melrose1992} that 
 $$
      \cV_b(\cW_{\epsilon}):= \{   \xi \in \CI(\cW_{\epsilon,k};T\cW_{\epsilon,k})\; | \; \xi x_i\in x_i\CI(\cW_{\epsilon,k}) \quad \forall i\}
 $$
 is the \textbf{Lie algebra of $b$-vector fields} on $\cW_{\epsilon,k}$. 
 \begin{definition}[\cite{CDR2016}]
 The \textbf{Lie algebra of $\Qb$-vector fields} on $\cW_{\epsilon,k}$ is given by the vector fields $\xi\in \cV_b(\cW_{\epsilon,k})$ such that 
 \begin{itemize} 
 \item $\left. \xi\right|_{\cH_1\cap \cW_{\epsilon,k}}$ is tangent to the fibres of $\phi_1: \cH_1\cap\cW_{\epsilon,k}\to S_1$ induced by \eqref{w.13c};
 \item  $\xi (x)\in x_1^2 x_2 \CI(\cW_{\epsilon,k})$.  
 \end{itemize} 
 \label{qb.1}\end{definition}
 \begin{remark}
 This definition depends in general on the choice of total boundary defining function $x$.  As shown in \cite[Lemma~1.16]{CDR2016}, another choice of total boundary defining function $x'$ will induce the same Lie algebra if and only if $\frac{x}{x'}$ is constant along the fibres of $\phi_1: \cH_1\cap \cW_{\epsilon,k}\to S_1$.  
 \label{qb.1b}\end{remark}
Near $\cH_1\cap \cH_2$, using the coordinates $x_1,x_2,\theta, y_1,\ldots, y_{2n-3}$ with $\theta$ the coordinate on $S_1$ and $y=(y_1,\ldots, y_{2n-3})$ coordinates on $\pa\bV_{\epsilon,k}$, we see that the Lie algebra $\cV_{\Qb}(\cW_{\epsilon,k})$ is locally spanned over $\CI(\cW_{\epsilon,k})$ by
\begin{equation}
      x_1^2\frac{\pa}{\pa x_1}, x_2\frac{\pa}{\pa x_2}- x_1\frac{\pa}{\pa x_1}, x_1\frac{\pa}{\pa \theta}, \frac{\pa}{\pa y_1},\ldots, \frac{\pa}{\pa y_{2n-3}}.
\label{qb.2}\end{equation}
Near $\cH_2$ but away from $\cH_1$, it is the same as the Lie algebra of $b$-vector fields, whereas near $\cH_1$ but away from $\cH_2$, it is the same as the Lie algebra of $\phi_1$-vector fields of Mazzeo-Melrose \cite{Mazzeo-MelrosePhi}.  Thus, globally on $\cW_{\epsilon,k}$, $\cV_{\Qb}(\cW_{\epsilon,k})$ corresponds to the sections of a locally free $\CI(\cX)$-module of rank $2n-2$.  This means that there exists a natural smooth vector bundle ${}^{\Qb}T \cW_{\epsilon,k}\to \cW_{\epsilon,k}$, called the \textbf{$\Qb$-tangent bundle}, such that there is a canonical identification
\begin{equation}
     \cV_{\Qb}(\cW_{\epsilon,k})= \CI(\cW_{\epsilon,k};{}^{\Qb}T\cW_{\epsilon,k}).
\label{qb.3}\end{equation}  
In fact,  the bundle ${}^{\Qb}T\cW_{\epsilon,k}$ is naturally a Lie algebroid with anchor map $\iota_{\Qb}: {}^{\Qb}T\cW_{\epsilon,k}\to T\cW_{\epsilon,k}$ inducing, via the identification \eqref{qb.3},  the natural inclusion 
$$
\cV_{\Qb}(\cW_{\epsilon,k})\subset \CI(\cW_{\epsilon,k}; T\cW_{\epsilon,k}).
$$
We denote by ${}^{\Qb}T^*\cW_{\epsilon,k}$ the dual of ${}^{\Qb}T\cW_{\epsilon,k}$ and call it the \textbf{$\Qb$-cotangent bundle} of $\cW_{\epsilon,k}$.  Near $\cH_1\cap\cH_2$, notice that ${}^{\Qb}T^*\cW_{\epsilon,k}$ is locally spanned by
\begin{equation}
 \frac{dx}{x_1^2x_2}, \frac{d\theta}{x_1}, \frac{dx_2}{x_2}, dy_1, \ldots, dy_{2n-3}.
\label{qb.4}\end{equation}
\begin{definition}[\cite{CDR2016}]
A $\Qb$-metric is a choice of Euclidean metric $g_{\Qb}$ for the vector bundle ${}^{\Qb}T\cW_{\epsilon,k}$.  A \textbf{smooth $\Qb$-metric} on $W_{\epsilon,k}$ is a Riemannian metric on $W_{\epsilon,k}$  induced by some $\Qb$-metric via the map $\iota_{\Qb}: {}^{\Qb}T \cW_{\epsilon,k}\to T \cW_{\epsilon,k}$.  We say instead that a Riemannian metric $g$ on $W_{\epsilon,k}$  is a \textbf{polyhomogeneous $\Qb$-metric} if it is induced by a Euclidean metric $g_{\Qb}$ of ${}^{\Qb}T\cW_{\epsilon,k}$ which is polyhomogeneous as a section of $S^2({}^{\Qb}T^*\cW_{\epsilon,k})$.  Finally, a $\Qb$-metric on $W_{\epsilon,k}$ is a Riemannian metric $g$ on $W_{\epsilon,k}$ such that
$g$ is quasi-isometric to some smooth $\Qb$-metric $g_{\Qb}$ and if all its covariant derivatives are bounded with respect to $g_{\Qb}$.      
\label{qb.5}\end{definition}

\begin{lemma}[Proposition~1.30 in \cite{CDR2016}]
Any $\Qb$-metric on $W_{\epsilon,k}$ is a complete metric of infinite volume with bounded geometry. 
\label{qb.6}\end{lemma}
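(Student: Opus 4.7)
The plan is to reduce everything to the case of a smooth $\Qb$-metric $g_{\Qb}$. Indeed, by Definition~\ref{qb.5}, any $\Qb$-metric $g$ is quasi-isometric to some smooth $\Qb$-metric $g_{\Qb}$ and has all its covariant derivatives bounded with respect to $g_{\Qb}$. Completeness and infinite volume are preserved under quasi-isometry, and the bounded-geometry control on derivatives of curvature is already built into the definition, so the only genuine issue will be a uniform lower bound on the injectivity radius. It therefore suffices to prove completeness, infinite volume, and a uniform injectivity radius bound for a model smooth $\Qb$-metric given near $\cH_1\cap \cH_2$ by
\begin{equation*}
 g_0 = \left(\frac{dx}{x_1^2 x_2}\right)^2 + \left(\frac{dx_2}{x_2}\right)^2 + \left(\frac{d\theta}{x_1}\right)^2 + |dy|^2,
\end{equation*}
with analogous simpler expressions near the interior of $\cH_1$ (a $\phi$-type metric) and near the interior of $\cH_2$ (a $b$-type metric).

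For completeness, I would exhibit a proper exhaustion function $f$ on $W_{\epsilon,k}$ with $|df|_{g_{\Qb}}$ uniformly bounded, since any divergent curve $\gamma$ then satisfies $|f(\gamma(b))-f(\gamma(a))| \le \|df\|_\infty \cdot \mathrm{length}(\gamma)$, forcing infinite length. The natural candidate is $f = -\log x = -\log x_1 - \log x_2$. Expanding $\frac{dx}{x_1^2 x_2} = \frac{dx_1}{x_1^2} + \frac{dx_2}{x_1 x_2}$ in the $\Qb$-coframe \eqref{qb.4}, one finds
\begin{equation*}
 d(\log x_1) = x_1 \cdot \frac{dx}{x_1^2 x_2} - \frac{dx_2}{x_2}, \qquad d(\log x_2) = \frac{dx_2}{x_2},
\end{equation*}
so $|d\log x_i|_{g_0}$ remains bounded up to the corner. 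Hence $|df|_{g_{\Qb}}$ is bounded, and completeness follows. For infinite volume, the orthonormal coframe \eqref{qb.4} yields the volume form
\begin{equation*}
 d\mathrm{vol}_{g_0} = \frac{dx \wedge dx_2 \wedge d\theta \wedge dy}{x_1^3 x_2^2} = \frac{dx_1 \wedge dx_2 \wedge d\theta \wedge dy}{x_1^3 x_2}
\end{equation*}
near the corner, and either $\int x_1^{-3}dx_1$ near $\cH_1$ or $\int x_2^{-1}dx_2$ near $\cH_2$ already diverges, so the total volume is infinite.

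For bounded geometry, the control on derivatives of curvature reduces, via the $\Qb$-Lie algebroid structure, to the observation that the Christoffel symbols of $g_{\Qb}$ in the $\Qb$-frame \eqref{qb.2} are smooth bounded on $\cW_{\epsilon,k}$, since the brackets of basis $\Qb$-vector fields remain in $\cV_{\Qb}(\cW_{\epsilon,k})$. This leaves the injectivity radius bound as the main point. I would handle it by a rescaling argument following the pattern of Mazzeo--Melrose for $\phi$-metrics and Degeratu--Mazzeo for $\QAC$-metrics: at any point $p$ with small $(x_1(p),x_2(p))$, introduce $\Qb$-adapted rescaled coordinates in which the model $g_0$ becomes, on a fixed-size Euclidean ball, uniformly bi-Lipschitz equivalent to the standard flat metric with uniform $C^k$ bounds. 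Standard injectivity-radius estimates (e.g.\ Cheeger--Gromov--Taylor applied to the rescaled metric) then transfer back to a uniform lower bound for the original metric.

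The main obstacle is precisely this rescaling analysis at $\cH_1\cap\cH_2$, where the anisotropic scalings needed to normalize $\cV_{\Qb}$ in the three different regimes (interior of $\cH_1$, interior of $\cH_2$, and the corner) must be matched consistently; this is what forces the peculiar form of $\cV_{\Qb}$ in \eqref{qb.2} and is the crux of Proposition~1.30 of \cite{CDR2016}, on which the present statement ultimately rests.
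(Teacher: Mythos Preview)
The paper gives no proof of this lemma at all: it is simply a citation of Proposition~1.30 in \cite{CDR2016}, as the lemma heading itself indicates. So there is nothing in the present paper to compare your argument against; what you have written is a sketch of how the cited result is actually proved, and you correctly identify this in your last paragraph.

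Your outline is sound and matches the standard strategy for such results. The reduction to a smooth model $\Qb$-metric via Definition~\ref{qb.5} is the right first move; completeness via the proper exhaustion $-\log x$ with bounded $\Qb$-gradient, and infinite volume via the explicit volume form in the $\Qb$-coframe, are both correct and cleanly argued. Your identification of the injectivity-radius lower bound as the genuine technical point is accurate, and the rescaling argument you sketch (normalize the $\Qb$-frame at a point near the corner, compare with a flat model, invoke Cheeger--Gromov--Taylor) is exactly how it goes in \cite{CDR2016}. One small point worth making more explicit in the reduction step: quasi-isometry alone does not preserve a positive injectivity radius, so once you have bounded curvature for the general $\Qb$-metric $g$ you should note that the uniform lower volume bound on small $g$-balls (inherited from $g_{\Qb}$ via quasi-isometry) is what feeds into the injectivity-radius estimate for $g$ itself.
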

 
We can now define the class of metrics that will be the subject of interest in this paper.    

\begin{definition}
On $W_{\epsilon,k}$, a \textbf{warped $\QAC$-metric} is a Riemannian metric $g_w$ of the form 
\begin{equation}
         g_w = \frac{g_{\Qb}}{\chi^2 }
\label{conf.1}\end{equation}
for some $\Qb$-metric $g_{\Qb}$, where
 \begin{equation}
 \chi:= \frac{x^{\frac{k(n-1)}{n(k-1)}}}{x_1}= x^{\frac{n-k}{n(k-1)}}x_2 =x_1^{\frac{n-k}{n(k-1)}}x_2^{\frac{k(n-1)}{n(k-1)}}.
 \label{gb.13b}\end{equation}
We say that $g_w$ is \textbf{smooth} or \textbf{polyhomogeneous} if $g_{\Qb}$ is smooth or polyhomogeneous.  
\label{qb.11}\end{definition}

\begin{proposition}
  Any warped $\QAC$-metric on $W_{\epsilon,k}$ is complete of infinite volume with bounded geometry.
\label{gb.12}\end{proposition}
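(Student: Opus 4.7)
The plan is to compare $g_w$ with the underlying $\Qb$-metric $g_{\Qb}$ and to describe $g_w$ in explicit model form near each boundary hypersurface, reducing the three conclusions to Lemma~\ref{qb.6} and known properties of asymptotically conical and warped-product metrics.

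\textbf{Completeness and infinite volume.} In a collar neighborhood of $\pa\cW_{\epsilon,k}$ we have $x_1,x_2\le 1$, hence $\chi=x_1^{a_1}x_2^{a_2}\le 1$ with $a_1=\tfrac{n-k}{n(k-1)}>0$ and $a_2=\tfrac{k(n-1)}{n(k-1)}>0$. Consequently $g_w=g_{\Qb}/\chi^2\ge g_{\Qb}$ as quadratic forms, and the $2n$-dimensional volume form satisfies $\dvol_{g_w}=\chi^{-2n}\dvol_{g_{\Qb}}\ge \dvol_{g_{\Qb}}$ in that region. Any $g_w$-Cauchy sequence is therefore $g_{\Qb}$-Cauchy, so convergent in $W_{\epsilon,k}$ by completeness of $g_{\Qb}$ (Lemma~\ref{qb.6}), while the total $g_w$-volume dominates the already infinite $g_{\Qb}$-volume.

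\textbf{Bounded geometry.} I would verify bounded geometry by exhibiting $g_w$ in model form in three asymptotic regimes. Near $\cH_2$ away from $\cH_1$, the $\Qb$-structure reduces to a $b$-structure and $\chi\asymp x_2^{a_2}$; the substitution $r=a_2^{-1}x_2^{-a_2}$ then converts $g_w$ into an asymptotically conical metric $dr^2+r^2 h+O(r^{-\alpha})$ for a metric $h$ on the link, which has bounded geometry. Near $\cH_1$ away from $\cH_2$, the $\Qb$-structure is a $\phi_1$-structure and $\chi\asymp x_1^{a_1}$; using the coordinates $(\zeta,z_{n+1})$ of \eqref{w.12} together with Theorem~\ref{ac.11}, $g_w$ is quasi-isometric (with bounded covariant derivatives) to the explicit warped product $g_{\bbC}+|z_{n+1}|^{\tfrac{2(n-k)}{k(n-1)}}g_{V_{\epsilon,k}}$, a warped product of the flat plane and the uniformly AC Calabi-Yau fibres, whose base has bounded geometry, whose fibres have uniformly bounded geometry, and whose positive warping is a power of the base radial coordinate. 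At the corner $\cH_1\cap\cH_2$, a direct computation in the coordinates \eqref{qb.2} exhibits $g_w$ as a bounded positive-definite tensor with bounded covariant derivatives with respect to an interpolating model that combines the two behaviors above.

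\textbf{Main obstacle.} The principal difficulty is the uniform bounded geometry estimate near the corner $\cH_1\cap\cH_2$, where the conical behaviour at $\cH_2$ and the warped fibration at $\cH_1$ mix. Following the strategy of Degeratu and Mazzeo for $\QAC$-metrics \cite{DM2014}, one exploits the homogeneity of $\chi$ under the independent rescalings $(x_1,x_2)\mapsto(\lambda x_1, x_2)$ and $(x_1,x_2)\mapsto(x_1,\lambda x_2)$, together with the scale invariance of the spanning $\Qb$-frame \eqref{qb.2}, to reduce the verification of bounded geometry to uniform estimates on a compact region.
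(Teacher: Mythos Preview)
Your argument for completeness and infinite volume is essentially the paper's: $\chi$ is a bounded positive function, so $g_w=\chi^{-2}g_{\Qb}\ge C^{-2}g_{\Qb}$, and Lemma~\ref{qb.6} does the rest.

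For bounded geometry, however, you are working much harder than necessary and leaving the decisive step (the corner) unfinished. The paper's argument avoids any case analysis by exploiting the conformal relation $g_w=\chi^{-2}g_{\Qb}$ directly. Applying the generators \eqref{qb.2} to $\chi=x_1^{\frac{n-k}{n(k-1)}}x_2^{\frac{k(n-1)}{n(k-1)}}$ gives, e.g., $x_1^2\partial_{x_1}\chi=\tfrac{n-k}{n(k-1)}\,x_1\chi$ and $(x_2\partial_{x_2}-x_1\partial_{x_1})\chi=\tfrac{(k-1)(n+1)+1}{n(k-1)}\,\chi$, with the angular and fibre generators annihilating $\chi$; iterating, every $\Qb$-covariant derivative of $\chi$ is pointwise bounded by a constant times $\chi$, so in particular $\nabla_{\Qb}^j\log\chi$ is bounded for every $j$. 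The conformal change formulas for the curvature tensor and its covariant derivatives then express $\Rm_{g_w}$ and $\nabla_{g_w}^j\Rm_{g_w}$ in terms of $\Rm_{g_{\Qb}}$, its derivatives, and derivatives of $\log\chi$, all measured in $g_{\Qb}$, multiplied by nonnegative powers of $\chi$; since $g_{\Qb}$ has bounded geometry (Lemma~\ref{qb.6}) and $\chi$ is bounded, the result follows uniformly, including at the corner. This single observation replaces your entire three-region analysis and the rescaling strategy you sketch for $\cH_1\cap\cH_2$.

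Your model-by-model approach is not wrong in spirit, but as written it is incomplete: the corner case is precisely where one must prove something, and you have only outlined a strategy there. The conformal argument makes this obstacle disappear.
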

\begin{proof}
Since $\chi$ is a bounded positive function, we see from Lemma~\ref{qb.6} that any warped $\QAC$-metric is complete of infinite volume with positive injectivity radius.  From the local basis \eqref{qb.2}, we see also that in terms of any $\Qb$-metric, the covariant derivatives of $\chi$ are bounded.  Hence, by Lemma~\ref{qb.6}, the curvature of any warped $\QAC$-metric is bounded, as well as all its covariant derivatives.  
\end{proof}

As for $\Qb$-metrics, we can naturally associate to a warped $\QAC$-metric a Lie algebra of warped $\QAC$-vector fields, namely
$$
    \cV_w(\cW_{\epsilon,k}):= \chi\cV_{\Qb}(\cW_{\epsilon,k}).
$$
Since $\chi$ is not smooth up to the boundary of $\cW_{\epsilon,k}$, it is not a subalgebra of $\CI(\cW_{\epsilon,k};T\cW_{\epsilon,k})$, but it is nevertheless a $\CI(\cW_{\epsilon,k})$-module.  In fact, there is a natural vector bundle over $\cW_{\epsilon,k}$ that we call the \textbf{warped $\QAC$-tangent bundle} and denote by ${}^{w}T\cW_{\epsilon,k}$, such that there is  a canonical identification
$$
       \CI(\cW_{\epsilon,k};{}^{w}T\cW_{\epsilon,k})= \cV_w(\cW_{\epsilon,k}).
$$   
Its dual ${}^{w}T^*\cW_{\epsilon,k}$ is the \textbf{warped $\QAC$-cotangent bundle}.  From \eqref{qb.4}, we see that near $\cH_1\cap \cH_2$, a basis of local sections of ${}^{w}T^*\cW_{\epsilon,k}$ is given by
\begin{equation}
 \frac{dx}{x_1^2x_2\chi}= \frac{dx}{x^{\frac{k(n-1)}{n(k-1)}+1}}, \frac{d\theta}{x_1\chi}= \frac{d\theta}{x^{\frac{k(n-1)}{n(k-1)}}}, \frac{dx_2}{x_2\chi}= \frac{dx_2}{x_2^2 x^{\frac{n-k}{n(k-1)}}}, \frac{dy_1}{\chi}=\frac{dy_1}{x_2x^{\frac{n-k}{n(k-1)}}}, \ldots, \frac{dy_{2n-3}}{\chi}=\frac{dy_{2n-3}}{x_2x^{\frac{n-k}{n(k-1)}}}.
 \label{gb.13}\end{equation}
Hence, in these coordinates, we see that an example of a warped $\QAC$-metric is given by
\begin{equation}
g_{w}= \frac{dx^2}{x^{2\frac{k(n-1)}{n(k-1)}+2}}+ \frac{d\theta^2}{x^{2\frac{k(n-1)}{n(k-1)}}}+ \frac{dx_2^2}{x_2^4 x^{2\frac{n-k}{n(k-1)}}} + \sum_{i=1}^{2n-3} \frac{dy_i^2}{x_2^2 x^{2\frac{n-k}{n(k-1)}}}.
\label{qb.13c}\end{equation}
Making the change of variables $x= r^{-\frac{n(k-1)}{k(n-1)}}$, notice that this can be written more simply as
\begin{equation}
g_w= c^2dr^2 + r^2 d\theta^2 + r^{\frac{2(n-k)}{k(n-1)}}\left( \frac{dx_2^2}{x_2^4} + \sum_{i=1}^{2n-3} \frac{dy_i^2}{x_2^2} \right) \quad \mbox{with} \quad c:=  \frac{n(k-1)}{k(n-1)}.
\label{qb.13d}\end{equation}
In this form, $g_w$ is a warped product of $\AC$-metrics.  This differs from the model for $\QAC$-metrics, which in this lower depth setting would be a Cartesian product of $\AC$-metrics, namely 
\begin{equation}
  g_{\QAC}= dr^2 + r^2d\theta^2 + \left( \frac{dx_2^2}{x_2^4} + \sum_{i=1}^{2n-3} \frac{dy_i^2}{x_2^2} \right);
 \label{qb.13e}\end{equation}  
 see \cite[\S~2.3.5]{DM2014} and \cite[Example~1.22 and (1.9)]{CDR2016}.

There are various functional spaces one can associated to a $\Qb$-metric or a warped $\QAC$-metric.  To describe them, recall that given a complete Riemannian manifold $(M,g)$ and a Euclidean vector bundle $E\to M$ with a connection $\nabla$ compatible with the Euclidean structure, one can for each $\ell\in \bbN_0$ associate the space $\cC^\ell_g(M;E)$ comprising continuous 
sections $\sigma: M\to E$ such that
\begin{equation}
  \nabla^j\sigma\in \cC^0(M; T^0_jM \otimes E), \quad \mbox{and} \quad  \sup_{p\in M} |\nabla^j\sigma|_g<\infty,  \quad \forall j\in \{0,\ldots,\ell\},
  \label{qb.7}\end{equation}  
where $\nabla$ denotes the covariant derivative induced by the Levi-Civita connection of $g$ and the connection on $E$, $|\cdot|_g$ is the norm induced by $g$ and the Euclidean structure on $E$ and $T^0_jM = T^*M^{\otimes j}$.  This is in fact a Banach space with norm 
\begin{equation}
     \| \sigma \|_{g,\ell} := \sum_{j=0}^{\ell} \sup_{p\in M} |\nabla^j \sigma(p)|_g.
\label{qb.8}\end{equation}
Taking the intersection over all $\ell$ yields the Fréchet space
\begin{equation}
  \CI_g(M;E):= \bigcap_{\ell\in \bbN_0} \cC^\ell_g(M;E).
\label{qb.9}\end{equation}
For $\alpha\in (0,1]$ and $\ell\in \bbN_0$, we can also consider the Hölder space $\cC^{\ell,\alpha}_g(M;E)$ of sections $\sigma\in \cC^\ell_g(M;E)$ such that 
$$
       [\nabla^\ell\sigma]_{g,\alpha}:= \sup \left\{ \frac{|P_{\gamma}(\nabla^\ell\sigma(\gamma(0)))-\nabla^\ell\sigma(\gamma(1))|}{\ell(\gamma)^{\alpha}} \quad | \quad \gamma\in \CI([0,1];M), \; \gamma(0)\ne \gamma(1) \right\}<\infty,
$$
where $P_{\gamma}: \left.  T^0_{\ell} M\otimes E\right|_{\gamma(0)}\to \left.  T^0_\ell M\otimes E\right|_{\gamma(1)}$ is the parallel transport along $\gamma$ and $\ell(\gamma)$ is the length of $\gamma$ with respect to $g$.  This is also a Banach space with norm given by
\begin{equation}
    \| \sigma\|_{g,\ell,\alpha}:= \| \sigma \|_{g,\ell}+ [\nabla^\ell\sigma]_{g,\alpha}.
\label{qb.10}\end{equation}
For $\mu\in \CI(M)$ a positive function, we can also consider the weighted version
$$
    \mu\cC^{\ell,\alpha}_g(M;E):= \left\{ \sigma \; | \;  \frac{\sigma}\mu \in \cC^{\ell,\alpha}_g(M;E) \right\} \quad \mbox{with norm} \quad  \| \sigma \|_{\mu\cC^{\ell,\alpha}_g}:= \left\| \frac{\sigma}\mu \right\|_{g,\ell,\alpha}.
$$
Similarly, we can consider the space $L^2_{g}(M;E)$ of measurable sections $\sigma$ such that
$$
          \| \sigma\|^2_{L^2_{g}}:= \int_{M} |\sigma|_g^2 dg <\infty,
$$
where $dg$ is the volume density of $g$, and for $\ell\in \bbN$ the Sobolev space $H^\ell_{g}(M;E)$ of sections $\sigma\in L^2_{g}(M;E)$ such that $\nabla^j\sigma \in L^2_{g}(M; T^0_j M \otimes E)$ for $j\in \{0,\ldots, \ell\}$.  

By choosing $M=W_{\epsilon,k}$ and $g=g_{\Qb}$ to be a $\Qb$-metric, we obtain in this way the $\Qb$-Hölder space $\cC_{\Qb}^{\ell,\alpha}(W_{\epsilon,k};E)$ as well as the space $\cC^\ell_{\Qb}(W_{\epsilon,k})$.   Similarly, by choosing $g=g_w$ to be a warped $\QAC$-metric, we can define the warped $\QAC$-Hölder space $\cC_{w}^{\ell,\alpha}(W_{\epsilon,k};E)$.  There is an obvious continuous inclusion $\cC_{\Qb}^{\ell,\alpha}(W_{\epsilon,k})\subset \cC^{\ell,\alpha}_{w}(W_{\epsilon,k};E)$.  Conversely, \cf \cite[Lemma~{1.31}]{CDR2016}, the following partial counterpart will be useful to solve the complex Monge-Ampère equation in \S~\ref{cy.0}.

\begin{lemma}
For $0<\delta<1$, there is a continuous inclusion $\chi^{\delta}\cC^{0,1}_{w}(W_{\epsilon,k};E)\subset \cC^{0,\alpha}_{\Qb}(W_{\epsilon,k};E)$ for $\alpha\le \delta$.
\label{inc.1}\end{lemma}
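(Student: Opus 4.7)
The plan is to write any $\sigma \in \chi^{\delta}\cC^{0,1}_w(W_{\epsilon,k};E)$ as $\sigma=\chi^{\delta}\tau$ with $M:=\|\tau\|_{\cC^{0,1}_w}<\infty$ and to bound $\sigma$ in $\cC^{0,\alpha}_{\Qb}(W_{\epsilon,k};E)$ through a case analysis in which the relevant length scale is the local value of $\chi$. Two structural facts drive the argument: first, the conformal relation $g_w=\chi^{-2}g_{\Qb}$ gives that the $g_w$-length of a curve $\gamma$ is $\int \chi(\gamma(t))^{-1}|\gamma'|_{\Qb}\,dt$; second, as already observed in the proof of Proposition~\ref{gb.12}, $\chi$ is bounded above and has bounded covariant derivatives with respect to any $\Qb$-metric, so $|\chi(p)-\chi(q)|\le C\,d_{\Qb}(p,q)$. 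This $g_{\Qb}$-Lipschitz control of $\chi$ in turn implies that $\chi$ stays comparable to $\chi(p)$ on $g_{\Qb}$-balls of radius a small multiple of $\chi(p)$, and that $|\chi^{\delta}(p)-\chi^{\delta}(q)|\le C\chi(p)^{\delta-1}d_{\Qb}(p,q)$ in that regime.

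The bound $\|\sigma\|_\infty\le CM$ is immediate from the boundedness of $\chi$. For the $\cC^{0,\alpha}_{\Qb}$-seminorm, fix $p,q$ with $\chi(p)\le\chi(q)$. In the \textbf{small-scale regime} $d_{\Qb}(p,q)\le c\,\chi(p)$, with $c$ small enough that $\chi\asymp\chi(p)$ along a $g_{\Qb}$-minimizing geodesic $\gamma$ from $p$ to $q$, integrating $\chi^{-1}|\gamma'|_{\Qb}$ yields $\ell_w(\gamma)\le C\,d_{\Qb}(p,q)/\chi(p)$. Combining this with the decomposition
\[
  P_{\gamma}\sigma(p)-\sigma(q)=\chi^{\delta}(p)\bigl(P_{\gamma}\tau(p)-\tau(q)\bigr)+\bigl(\chi^{\delta}(p)-\chi^{\delta}(q)\bigr)\tau(q)
\]
and the $g_w$-Lipschitz bound on $\tau$ gives $|P_{\gamma}\sigma(p)-\sigma(q)|\le CM\,\chi(p)^{\delta-1}d_{\Qb}(p,q)$. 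Dividing by $d_{\Qb}(p,q)^{\alpha}$ and inserting $d_{\Qb}(p,q)\le c\,\chi(p)$ produces a uniform bound of order $CM\,\chi(p)^{\delta-\alpha}$, which is controlled because $\alpha\le\delta$ and $\chi$ is bounded above.

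In the \textbf{large-scale regime} $d_{\Qb}(p,q)>c\,\chi(p)$, the estimate $\chi(q)\le\chi(p)+Cd_{\Qb}(p,q)$ together with $\chi(p)\le\chi(q)$ gives $\chi(p),\chi(q)\le C'd_{\Qb}(p,q)$, so $|P_{\gamma}\sigma(p)-\sigma(q)|\le|\sigma(p)|+|\sigma(q)|\le CM\,d_{\Qb}(p,q)^{\delta}$. For $d_{\Qb}(p,q)\le 1$ this is dominated by $CM\,d_{\Qb}(p,q)^{\alpha}$ since $\alpha\le\delta$, while for $d_{\Qb}(p,q)\ge 1$ the Hölder ratio is controlled directly by $2\|\sigma\|_\infty$. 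Passing from functions to sections of $E$ is benign since the seminorm for $\ell=0$ only involves parallel transport with respect to the fixed connection on $E$, which is independent of the base metric; moreover the bounded geometry of $g_{\Qb}$ (Lemma~\ref{qb.6}) allows us to reduce to curves realizing the distance. I expect the step requiring most care to be the small-scale regime, where one must simultaneously control the $g_w$-length of a $g_{\Qb}$-geodesic and the variation of $\chi^{\delta}$ along it, both of which rest on the bounded derivative property of $\chi$ from Proposition~\ref{gb.12} and on choosing the threshold $c$ accordingly.
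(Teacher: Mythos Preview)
Your argument is correct and takes a genuinely different route from the paper. The paper works directly with an arbitrary path $\gamma$ and, via the H\"older inequality with exponents $\tfrac{1}{\delta}$ and $\tfrac{1}{1-\delta}$, derives the interpolation estimate
\[
  \bigl|\chi(\gamma(0))^{-\delta}-\chi(\gamma(1))^{-\delta}\bigr|\le K\delta\,\ell_w(\gamma)^{\delta}\ell_{\Qb}(\gamma)^{1-\delta},
\]
then bounds $|P_\gamma\sigma(\gamma(0))-\sigma(\gamma(1))|$ in terms of this and $\min\{\ell_w(\gamma),1\}$, splitting only according to whether $\ell_{\Qb}(\gamma)\lessgtr 1$. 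Your approach instead avoids H\"older's inequality entirely by a scale dichotomy relative to $\chi(p)$: on the scale $\ell_{\Qb}(\gamma)\le c\,\chi(p)$ the conformal factor is essentially frozen, so $\ell_w(\gamma)\asymp\chi(p)^{-1}\ell_{\Qb}(\gamma)$ and the Lipschitz bound on $\tau$ converts directly; on larger scales the trivial sup bound combined with $\chi(p),\chi(q)\lesssim\ell_{\Qb}(\gamma)$ already gives $|\sigma(p)|+|\sigma(q)|\lesssim\ell_{\Qb}(\gamma)^{\delta}$. This is more elementary and arguably more transparent geometrically; the paper's interpolation trick, on the other hand, handles all scales uniformly without choosing a threshold.

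One small point to tighten: your final remark that bounded geometry ``allows us to reduce to curves realizing the distance'' is not correct as stated for sections of $E$, since $P_\gamma$ genuinely depends on the path. However this is harmless: every one of your estimates goes through verbatim for an \emph{arbitrary} path $\gamma$ once you replace $d_{\Qb}(p,q)$ by $\ell_{\Qb}(\gamma)$ throughout (in the small-scale case, every point of $\gamma$ lies within $\ell_{\Qb}(\gamma)\le c\,\chi(p)$ of $p$, so $\chi\asymp\chi(p)$ along $\gamma$ regardless of whether $\gamma$ is minimizing). With that cosmetic change your proof is complete.
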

\begin{proof}
Let $g_{\Qb}$ be a choice of $\Qb$-metric and let $g_w:= \frac{g_{\Qb}}{\chi^2}$ be the corresponding warped $\QAC$-metric.  Notice first from \eqref{qb.2} and \eqref{qb.4} that 
\begin{equation}
d\chi\in \chi\cA_{\phg}(\cW_{\epsilon,k};{}^{\Qb}T^*\cW_{\epsilon})\subset \chi \cC^{\infty}_{\Qb}(W_{\epsilon,k};{}^{\Qb}T\cW_{\epsilon,k}).
\label{inc.2}\end{equation}
In particular, we have that  
\begin{equation}
       d\log \chi= \frac{d\chi}{\chi}\in \CI_{\Qb}(W_{\epsilon,k};{}^{\Qb}T^*\cW_{\epsilon,k})
\label{inc.3}\end{equation}
and that 
\begin{equation}
     d\chi^{-\delta}= \frac{-\delta d\chi}{\chi^{1+\delta}}\in \chi^{-\delta}\CI_{\Qb}(W_{\epsilon,k};{}^{\Qb}T^*\cW_{\epsilon,k}).
\label{inc.4}\end{equation}
Now, given any path $\gamma\in \CI([0,1];W_{\epsilon,k})$ with $\gamma(0)\ne \gamma(1)$, let $\ell_{\Qb}(\gamma)$ and $\ell_{w}(\gamma)$ be the length of $\gamma$ with respect to the metrics $g_{\Qb}$ and $g_{w}$.  Denoting by $d\gamma_{\Qb}$ the measure on $[0,1]$ induced by $\gamma$ and the metric $g_{\Qb}$, we see using \eqref{inc.4} and the Hölder inequality with $p=\frac{1}\delta$ and $q= \frac{1}{1-\delta}$ that
\begin{equation}
\begin{aligned}
  \left| \frac{1}{\chi(\gamma(0))^{\delta}}- \frac{1}{\chi(\gamma(1))^{\delta}}\right| & \le \delta\int_0^1 \frac{| d\chi(\gamma(t))|_{g_{\Qb}}}{\chi(\gamma(t))^{1+\delta}}d\gamma_{\Qb} \le K\delta\int \frac{d\gamma_{\Qb}}{\chi(\gamma(t))^{\delta}} \\
  & \le K\delta \left( \int_{0}^1\frac{d\gamma_{\Qb}}{\chi(\gamma(t))} \right)^{\delta}
  \left( \int_0^1 d\gamma_{\Qb}  \right)^{1-\delta}= K\delta \ell_w(\gamma)^{\delta}\ell_{\Qb}(\gamma)^{1-\delta},
\end{aligned}  
\label{inc.5}\end{equation}  
where $K:= \|d\log \chi \|_{g_{\Qb},0}$.  

On the other hand, given $\sigma\in \chi^{\delta}\cC^{0,1}_{w}(W_{\epsilon,k};E)$, consider the positive constant
$C:= \left\| \chi^{-\delta}\sigma \right\|_{g_w,0,1}$.  Then we have that
\begin{equation}
\begin{aligned}
2C \min\left\{ \ell_w(\gamma),1\right\} & \ge \left| \frac{P_{\gamma}(\sigma(\gamma(0)))}{\chi(\gamma(0))^{\delta}}- \frac{\sigma(\gamma(1))}{\chi(\gamma(1))^{\delta}} \right| \\
&=  \left| \frac{P_{\gamma}(\sigma(\gamma(0)))}{\chi(\gamma(0))^{\delta}}-\frac{\sigma(\gamma(1))}{\chi(\gamma(0))^{\delta}}+ \frac{\sigma(\gamma(1))}{\chi(\gamma(0))^{\delta}} -\frac{\sigma(\gamma(1))}{\chi(\gamma(1))^{\delta}}  \right| \\
& \ge \frac{|P_{\gamma}(\sigma(0))-\sigma(\gamma(1))|}{\chi(\gamma(0))^{\delta}}- \|\sigma\|_{g_w,0}\left| \chi(\gamma(0))^{-\delta}- \chi(\gamma(1))^{-\delta} \right| \\
& \ge \frac{|P_{\gamma}(\sigma(0))-\sigma(\gamma(1))|}{\chi(\gamma(0))^{\delta}} -CK\delta \ell_w(\gamma)^{\delta}\ell_{\Qb}(\gamma)^{1-\delta},
\end{aligned}
\label{inc.6}\end{equation}
where we have used \eqref{inc.5} in the last step, as well as the inequality $\|\sigma\|_{g_w,0}\le C$ assuming without loss of generality that $\chi\le 1$.  In other words, we see that \eqref{inc.6} gives the estimate
\begin{equation}
|P_{\gamma}(\sigma(0))-\sigma(\gamma(1))|\le 2C\chi(\gamma(0))^{\delta} \min\left\{ \ell_w(\gamma),1\right\}+ CK\delta\chi(\gamma(0))^{\delta} \ell_w(\gamma)^{\delta}\ell_{\Qb}(\gamma)^{1-\delta}.
\label{inc.7}\end{equation}  
Hence, if $\ell_{\Qb}(\gamma)<1$, let $t_{\min}\in [0,1]$ be a point where $\chi\circ\gamma\in \CI([0,1])$ attains its minimum, so that by \eqref{inc.3},
$$
\left| \log\left(\frac{\chi(\gamma(t_{min}))}{\chi(\gamma(0))}\right) \right|\le K \ell_{\Qb}(\gamma)\le K  \quad \Longrightarrow \quad \frac{\chi(\gamma(0))}{\chi(\gamma(t_{\min}))} \le e^K,
$$
where $K$ is as in \eqref{inc.5}.  In this case, since $\alpha\le \delta$, we see that \eqref{inc.7} gives
\begin{equation}
\begin{aligned}
\frac{|P_{\gamma}(\sigma(0))-\sigma(\gamma(1))|}{\ell_{\Qb}(\gamma)^{\alpha}} & \le
\frac{2C\chi(\gamma(0))^{\delta} \min\left\{ \ell_w(\gamma),1\right\}}{\ell_{\Qb}(\gamma)^{\alpha}} + \frac{CK\delta\chi(\gamma(0))^{\delta} \ell_w(\gamma)^{\delta}\ell_{\Qb}(\gamma)^{1-\delta}}{\ell_{\Qb}(\gamma)^{\alpha}} \\
 & \le \frac{2C\chi(\gamma(0))^{\delta} \min\left\{ \ell_w(\gamma),1\right\}}{\chi(\gamma(t_{\min}))^{\alpha}\ell_{w}(\gamma)^{\alpha}}+ \frac{CK\delta\chi(\gamma(0))^{\delta} \ell_w(\gamma)^{\delta}\ell_{\Qb}(\gamma)^{1-\alpha}}{\chi(\gamma(t_{\min}))^{\delta}\ell_{w}(\gamma)^{\delta}}  \\
 &\le 2Ce^{K\alpha} + CK\delta e^{K\delta}\ell_{\Qb}(\gamma)^{1-\alpha}\le (2+K\delta)C e^{K\delta}.
\end{aligned}
\label{inc.8}\end{equation}  
If instead $\ell_{\Qb}(\gamma)\ge 1$, then we have more simply that
\begin{equation}
\frac{|P_{\gamma}(\sigma(0))-\sigma(\gamma(1))|}{\ell_{\Qb}(\gamma)^{\alpha}}\le |P_{\gamma}(\sigma(0))-\sigma(\gamma(1))|\le 2 \| \sigma\|_{g_w,0} \le 2C\le 2C+ CK\delta\le (2+ K\delta)Ce^{K\delta}.
\label{inc.9}\end{equation}
Hence, combining \eqref{inc.8} with \eqref{inc.9} and taking the supremum over $\gamma$ yields
$$
     [\sigma]_{g_{\Qb},0,\alpha}\le (2+K\delta)Ce^{K\delta}= (2+K\delta)e^{K\delta} \left\| \chi^{-\delta}\sigma \right\|_{g_w,0,1},
$$
from which the result follows.  

\end{proof}

We can also consider the Sobolev space $H_{\Qb}^\ell(W_{\epsilon,k})$ associated to a $\Qb$-metric.  For a warped $\QAC$-metric, instead of the natural Sobolev space associated to such a metric, we will consider the weighted version of the $\Qb$-Sobolev space
$$
         H^\ell_w(W_{\epsilon,k};E):= \chi^n H^\ell_{\Qb}(W_{\epsilon,k};E).
$$
The factor $\chi^n$ ensures that we integrate with respect to the volume density of a warped $\QAC$-metric, but since we use a weighted $\Qb$-Sobolev space, the pointwise norms of the derivatives are measured with respect to a $\Qb$-metric instead of a warped $\QAC$-metric.

As for $\QAC$-metrics, we can consider weighted versions of this Sobolev space.  First, consider the strictly positive functions  $\rho,w$ on $\bbC^{n+1}$ defined by
\begin{equation}
    \rho:= x^{-\frac{k(n-1)}{n(k-1)}}=\tbeta^{*}\left( \frac{1}{\xi^{n-k}} \right)  \quad \mbox{outside a compact set} 
\label{ww.2}\end{equation}
and 
\begin{equation}
 w:=x_1= \frac{ \left(|z_{n+1}|^{\frac{1}{k}} \langle \zeta \rangle\right)^{\frac{n-k}{n-1}} }{|z_{n+1}|}=  \frac{\langle \zeta \rangle^{\frac{n-k}{n-1}}}{|z_{n+1}|^{\frac{n(k-1)}{k(n-1)}}}  \quad \mbox{near $\cH_1$,}
\label{ww.3}\end{equation}
where without loss of generality we can take $x=|z_{n+1}|^{-\frac{n(k-1)}{k(n-1)}}$ near $\cH_1$.
Notice in particular that $\rho w= \chi^{-1}$, so that \eqref{conf.1} can be written as
$$
         g_w= (\rho w)^2 g_{\Qb},
$$
in direct analogy with the corresponding relation for $\QAC$-metrics.  
In terms of these weight functions, we will be interested in the weighted Sobolev spaces  
\begin{equation}
           \rho^{\delta+n} w^{\tau + n-1}H^{\ell}_w(W_{\epsilon,k} )
\label{ww.4}\end{equation}
and the weighted Hölder spaces 
\begin{equation}
\rho^{\delta}w^{\tau} \cC^{\ell,\alpha}_{\Qb}(W_{k,\epsilon})
\end{equation}
for $\ell\in \bbN_0$ and $\delta,\tau\in \bbR$.

We now return to the potential \eqref{w.16}.   
 \begin{lemma}
For $\epsilon\ne 0$, there exists an open neighborhood $\cU_1$ of $\cH_1\subset \cX$ such that $\frac{\sqrt{-1}}{2} \pa\db u_{\epsilon,k}$ is the Kähler form on $\cU_1\cap W_{\epsilon,k}$ of a warped $\QAC$-metric.  
\label{w.17}\end{lemma}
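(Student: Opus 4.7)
The plan is to work directly in the adapted holomorphic coordinates $(\zeta,z_{n+1})$ of \eqref{w.12} on $\mathcal{R}_{\theta_0}$, compute $\pa\db u_{\epsilon,k}$ explicitly, verify its positive definiteness on a neighborhood of $\cH_1$, and identify the resulting K\"ahler form with that of a warped $\QAC$-metric in the sense of Definition \ref{qb.11}. On $W_{\epsilon,k}\cap\mathcal{R}_{\theta_0}\cap\{z_{n+1}\ne 0\}$, homogeneity of $P_k$ reduces the defining equation $P_k(z)=\epsilon z_{n+1}$ to $P_k(\zeta)=\epsilon$, so $\zeta\in V_{\epsilon,k}$ and $(\zeta, z_{n+1})$ provides coordinates adapted to the fibration \eqref{w.13c}.

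Setting $\alpha:=(n-k)/(k(n-1))$ and using $\pa\db|z_{n+1}|^{2\alpha}=\alpha^{2}|z_{n+1}|^{2\alpha-2}\,dz_{n+1}\wedge d\bz_{n+1}$, a direct computation writes $\pa\db u_{\epsilon,k}$ as a sum of five terms: a scalar correction $(1+\alpha^{2}|z_{n+1}|^{2\alpha-2}\phi_{\epsilon,k})\,dz_{n+1}\wedge d\bz_{n+1}$, the leading piece $|z_{n+1}|^{2\alpha}\,\pa_{V_{\epsilon,k}}\db_{V_{\epsilon,k}}\phi_{\epsilon,k}=2|z_{n+1}|^{2\alpha}\omega_{V_{\epsilon,k}}$, and two mixed terms involving $dz_{n+1}\wedge\db_{V_{\epsilon,k}}\phi_{\epsilon,k}$ and its conjugate. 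Positive definiteness then follows from a Schur complement argument in the block coordinates $(z_{n+1},\eta)$, where $\eta$ are local holomorphic coordinates on $V_{\epsilon,k}$: the $\eta\bar\eta$-block $|z_{n+1}|^{2\alpha}(\phi_{\epsilon,k})_{\eta\bar\eta}$ is positive definite by Theorem \ref{ac.11}, and the Schur complement for the $z\bz$-block equals
\[
1+\alpha^{2}|z_{n+1}|^{2\alpha-2}\bigl(\phi_{\epsilon,k}-|\pa\phi_{\epsilon,k}|^{2}_{g_{V_{\epsilon,k}}}\bigr).
\]
Using the cone asymptotics $\phi_{\epsilon,k}\asymp r^{2}\asymp\langle\zeta\rangle^{2(n-k)/(n-1)}$ from \eqref{ac.6b} and Theorem \ref{ac.11}, the analogous estimate $|\pa\phi_{\epsilon,k}|^{2}_{g_{V_{\epsilon,k}}}\asymp r^{2}$ for the gradient of the Kähler potential of an asymptotic cone metric, and the formula \eqref{ww.3} for $x_{1}$, the combination $|z_{n+1}|^{2\alpha-2}(\phi_{\epsilon,k}-|\pa\phi_{\epsilon,k}|^{2}_{g_{V_{\epsilon,k}}})$ is $O(x_{1}^{2})$, and therefore tends to $0$ as $x_{1}\to 0$. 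Hence the Schur complement is close to $1$ on a sufficiently small neighborhood $\cU_{1}$ of $\cH_{1}$, yielding positive definiteness on $\cU_{1}\cap W_{\epsilon,k}$.

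To identify the metric as warped $\QAC$, the plan is to verify that $\chi^{2}\cdot\tfrac{\sqrt{-1}}{2}\pa\db u_{\epsilon,k}$ extends to a smooth (in fact polyhomogeneous) $\Qb$-metric on $\cU_{1}$. Rewriting each of the five terms in the local $\Qb$-cotangent frame \eqref{qb.4} near $\cH_{1}\cap\cH_{2}$, and in the analogous simpler frame away from $\cH_{2}$, and trading $|z_{n+1}|$ for the boundary defining functions $x_{1},x_{2}$ and $\zeta$ via \eqref{ww.3}, the leading term $2|z_{n+1}|^{2\alpha}\omega_{V_{\epsilon,k}}$ matches the model \eqref{qb.13d} after expressing $g_{V_{\epsilon,k}}$ as an $\AC$-metric in the $(x_{2},y)$-coordinates, while the scalar term $dz_{n+1}\wedge d\bz_{n+1}$ provides the $\bbC$-factor of the warped product. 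The polyhomogeneity of $\phi_{\epsilon,k}$ from Theorem \ref{ac.11} ensures that the resulting coefficients lie in $\cA_{\phg}(\cW_{\epsilon,k})$, so that $\chi^{2}g$ defines a polyhomogeneous $\Qb$-metric, as required.

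The main obstacle is this last step, and specifically the two mixed terms of the five-term expansion, which are off-diagonal with respect to the warped-product splitting and could a priori spoil the $\Qb$-structure after the $\chi^{2}$-rescaling. The delicate point is to verify near $\cH_{1}\cap\cH_{2}$ that $\pa_{V_{\epsilon,k}}\phi_{\epsilon,k}$ is bounded in the $b$-calculus sense on $\overline{V}_{\epsilon,k}$, which follows from Theorem \ref{ac.11} together with Remark \ref{ac.9b}, and then to track carefully the cancellations provided by the factor $|z_{n+1}|^{2\alpha-2}z_{n+1}$ against \eqref{ww.3} in order to confirm that each mixed term contributes a smooth bounded coefficient in the $\Qb$-frame.
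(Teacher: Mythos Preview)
Your proposal is correct and rests on the same computation as the paper, but the organization differs. The paper groups the five terms as
\[
\frac{\sqrt{-1}}{2}\pa\db u_{\epsilon,k}=\underbrace{\frac{\sqrt{-1}}{2}dz_{n+1}\wedge d\bar z_{n+1}+|z_{n+1}|^{2\alpha}\,\frac{\sqrt{-1}}{2}\pa\db\phi_{\epsilon,k}}_{=:h}+Q,
\]
recognizes $h$ directly as the warped product $dr^{2}+r^{2}d\theta^{2}+r^{2\alpha}g_{V_{\epsilon,k}}$ (hence a warped $\QAC$-metric by comparison with \eqref{qb.13d}), and then shows in one stroke that the remaining three terms satisfy
\[
Q\in x_{1}\,\cA_{\phg}\bigl(\cW_{\epsilon,k};\Lambda^{2}({}^{w}T^{*}\cW_{\epsilon,k}\otimes\bbC)\bigr).
\]
This single estimate simultaneously yields positive definiteness near $\cH_{1}$ (since $Q$ is small relative to $h$) and the warped $\QAC$ structure (since adding an $x_{1}$-decaying polyhomogeneous perturbation in the $w$-frame preserves Definition~\ref{qb.11}).

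Your route---a Schur complement for positivity, then a term-by-term verification that $\chi^{2}g$ is $\Qb$ in the frame \eqref{qb.4}---reaches the same conclusion but is more laborious, and the ``main obstacle'' you flag (the mixed terms) is precisely what the paper's estimate on $Q$ dispatches in one line. Your Schur-complement computation gives the sharper bound $O(x_{1}^{2})$ on the diagonal correction, while the paper is content with the uniform $O(x_{1})$ bound on all of $Q$; either suffices. The paper's packaging is worth internalizing: once a model is identified, estimating the remainder as a section of the model's natural cotangent bundle handles positivity and the geometric class in one move.
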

\begin{proof}
The form is clearly closed, so to see that it is Kähler, it suffices to check that it is positive definite near $\cH_1$.  We compute that 
$$
   \frac{\sqrt{-1}}2 \pa \db u_{\epsilon,k} = \frac{\sqrt{-1}}2 dz_{n+1}\wedge d\overline{z}_{n+1} + |z_{n+1}|^{\frac{2(n-k)}{k(n-1)}} \frac{\sqrt{-1}}{2}\pa\db \phi_{\epsilon,k}(\zeta) + Q
$$
where
$$
 Q  :=\phi_{\epsilon,k}(\zeta) \frac{\sqrt{-1}}2 \pa \db |z_{n+1}|^{\frac{2(n-k)}{k(n-1)}}+ \frac{\sqrt{-1}}2 \pa \phi_{\epsilon,k}(\zeta) \wedge\db |z_{n+1}|^{\frac{2(n-k)}{k(n-1)}}+ 
  \frac{\sqrt{-1}}2\pa |z_{n+1}|^{\frac{2(n-k)}{k(n-1)}}  \wedge \db\phi_{\epsilon,k}(\zeta).
$$
Clearly, since $\frac{\sqrt{-1}}{2} \pa \db \phi_{\epsilon,k}(\zeta)>0$ on $V_{\epsilon,k}$, the $2$-form
$$
\frac{\sqrt{-1}}2 dz_{n+1}\wedge d\overline{z}_{n+1} + |z_{n+1}|^{\frac{2(n-k)}{k(n-1)}} \frac{\sqrt{-1}}2\pa\db \phi_{\epsilon,k}(\zeta)
$$
is positive definite.  Let $h$ be the corresponding Hermitian metric.  Using the polar coordinates $z_{n+1}=re^{i\theta}$, notice that $h$  is a warped product of $\AC$-metrics,
\begin{equation}
  h= dr^2 + r^2d\theta^2 + r^{\frac{2(n-k)}{k(n-1)}} g_{V_{\epsilon,k}},
\label{wa.1}\end{equation}
where the metric $g_{V_{\epsilon,k}}$ is seen as a family of fibrewise $\AC$-metrics in the fibre bundle \eqref{w.13c} as described in Remark~\ref{w.16b}.  In particular, comparing with \eqref{qb.13d}, we see that $h$ is a warped $\QAC$-metric.  
Using $h$ to compute the norm of $Q$ and taking into account the fact that 
$$
        \phi_{\epsilon,k}(\zeta) \asymp \| \zeta \|^{\frac{2(n-k)}{n-1}}_{N^*_{F_k}}\asymp \langle\zeta\rangle^{\frac{2(n-k)}{n-1}}= x_2^{-2} \quad \mbox{as} \; |\zeta|\to \infty,
$$
we see that 
\begin{equation}
  Q\in |z_{n+1}|^{\frac{n-k}{k(n-1)}-1 } \langle\zeta\rangle^{\frac{(n-k)}{n-1}}\cA_{\phg}(\cW_{\epsilon,k};\Lambda^2({}^{w}T^*\cW_{\epsilon,k}\otimes \bbC))= x_1\cA_{\phg}(\cW_{\epsilon,k};\Lambda^2({}^{w}T^*\cW_{\epsilon,k}\otimes \bbC)),
  \label{w.18}\end{equation}
from which the result follows.
\end{proof}

\section{Construction of an asymptotically  Calabi-Yau metric on $W_{\epsilon,k}$ } \label{acy.0}

To determine if $\frac{\sqrt{-1}}{2}\pa \db u_{\epsilon,k}$ is asymptotically Calabi-Yau as we approach $\cH_1$,  let us look at the natural holomorphic volume form on $W_{\epsilon,k}$.  It is defined implicitly by 
$$
      \left. dz_1\wedge\ldots \wedge dz_n\wedge dz_{n+1} \right|_{W_{\epsilon,k}}= \left. \Omega^n_{\epsilon}\wedge d\left( P_k(z_1,\ldots,z_n) - \epsilon z_{n+1} \right)\right|_{W_{\epsilon,k}}.
$$
For $\epsilon=0$, notice in particular that $\Omega^n_0= -\Omega^{n-1}_0\wedge dz_{n+1}.$  Now, in terms of the coordinates $(\zeta_1,\ldots, \zeta_n, z_{n+1})$, notice that 
$$
      dz_1\wedge \ldots \wedge dz_n\wedge dz_{n+1}= z_{n+1}^{\frac{n}k}d\zeta_1\wedge \ldots \wedge d\zeta_n\wedge dz_{n+1}.
$$
Let us denote by $\Omega^{n-1}_{\epsilon}(\zeta)$ the holomorphic volume form defined implicitly on \eqref{w.13} by 
$$
    \left. d\zeta_1\wedge \ldots \wedge d\zeta_n \right|_{V_{\epsilon,k}}
 = \left.  \Omega^{n-1}_{\epsilon}(\zeta)\wedge d\left( P_k(\zeta_1,\ldots,\zeta_n)-\epsilon\right)\right|_{V_{\epsilon,k}}.
$$
Then, from the fact that 
$$
\left.  d\left( P_k(z_1,\ldots,z_n)  -\epsilon z_{n+1} \right)\right|_{W_{\epsilon,k}}= \left.  z_{n+1} d\left( P_k(\zeta_1,\ldots,\zeta_n)-\epsilon \right)\right|_{W_{\epsilon,k}},
$$
we see that
\begin{equation}
   \Omega^n_{\epsilon}= -z_{n+1}^{\frac{n}{k}-1} \Omega^{n-1}_{\epsilon}( \zeta)\wedge dz_{n+1}.
\label{w.19}\end{equation}
In particular, we see that
\begin{equation}
 \Omega^n_{\epsilon}\wedge \overline{\Omega}^n_{\epsilon}= |z_{n+1}|^{\frac{2(n-k)}{k}}\Omega^{n-1}_{\epsilon}(\zeta)\wedge dz_{n+1}\wedge \overline{\Omega}^{n-1}_{\epsilon}(\overline{\zeta})\wedge d\overline{z}_{n+1}.
\label{w.20}\end{equation}
In the case $\epsilon=0$, we know that $g_{W_{0,k}}$ is Calabi-Yau and that there is a constant $c_n$ such that 
$$
    \omega^n_{W_{0,k}}= c_n \Omega^n_0 \wedge \overline{\Omega}^n_0.
$$
Hence, we see from \eqref{w.18}, \eqref{ac.9b} and \eqref{w.20} that the following lemma holds.

\begin{lemma}
  The Kähler metric $\frac{\sqrt{-1}}2 \pa \db u_{\epsilon,k}$ is asymptotically Calabi-Yau near $\cH_1$ in the sense that 
  $$
        \log \left( \frac{(\frac{\sqrt{-1}}2 \pa \db u_{\epsilon,k})^n}{c_n \Omega^n_{\epsilon}\wedge \Omega_{\epsilon}^n} \right)\in x_1\cA_{\phg}(\cW_{\epsilon,k})\subset x_1\CI_{\Qb}(W_{\epsilon,k}).
  $$
\label{w.21}\end{lemma}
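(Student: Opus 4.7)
The plan is to work directly from the decomposition $\omega := \frac{\sqrt{-1}}{2}\pa\db u_{\epsilon,k} = h + Q$ established in the proof of Lemma~\ref{w.17}, where $h$ is the warped-product K\"ahler form of \eqref{wa.1} and $Q\in x_1\cA_{\phg}(\cW_{\epsilon,k};\Lambda^2({}^{w}T^*\cW_{\epsilon,k}\otimes \bbC))$ by \eqref{w.18}. Expanding by the binomial formula,
$$\omega^n = h^n + \sum_{j=1}^{n}\binom{n}{j} h^{n-j}\wedge Q^j,$$
so $\omega^n - h^n \in x_1\,\cA_{\phg}(\cW_{\epsilon,k};\Lambda^{2n}({}^{w}T^*\cW_{\epsilon,k}\otimes \bbC))$, using that $h$ is a bounded polyhomogeneous section of $\Lambda^2({}^{w}T^*\cW_{\epsilon,k}\otimes \bbC)$ and that $x_1\cA_{\phg}$ is stable under multiplication by bounded polyhomogeneous factors.

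Next I would compute $h^n$ explicitly. Setting $\omega_\bbC := \frac{\sqrt{-1}}{2}dz_{n+1}\wedge d\bar z_{n+1}$ and $\omega_V := \frac{\sqrt{-1}}{2}\pa\db \phi_{\epsilon,k}(\zeta)$, one has $\omega_\bbC^2 = 0$ and $\omega_V^n = 0$ since $\omega_V$ is pulled back from a fibre of complex dimension $n-1$. Only the cross term survives in the binomial expansion, yielding
$$h^n = n\,|z_{n+1}|^{\frac{2(n-k)}{k}}\,\omega_\bbC \wedge \omega_V^{n-1}.$$
By Remark~\ref{w.16b} and the last identity in \eqref{ac.9b}, the fibrewise Calabi-Yau condition gives $\omega_V^{n-1} = c_{n-1}\Omega^{n-1}_\epsilon(\zeta)\wedge \overline\Omega^{n-1}_\epsilon(\bar\zeta)$. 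Rearranging the wedge factors and comparing with \eqref{w.20}, this yields $h^n = c_n\,\Omega^n_\epsilon \wedge \overline\Omega^n_\epsilon$ for a universal constant $c_n$; the value of $c_n$ agrees with the one appearing in $\omega_{W_{0,k}}^n = c_n\,\Omega^n_0\wedge \overline\Omega^n_0$, as can be verified by specialising to $\epsilon = 0$ where $u_{0,k}$ reduces to \eqref{w.14}.

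Combining these two steps, I obtain $\omega^n = c_n\,\Omega^n_\epsilon\wedge \overline\Omega^n_\epsilon\,(1 + x_1 f)$ for some $f\in \cA_{\phg}(\cW_{\epsilon,k})$, after dividing the remainder from the binomial expansion by the reference $(n,n)$-form $c_n\,\Omega^n_\epsilon\wedge \overline\Omega^n_\epsilon$, which is non-vanishing near $\cH_1$. Since $\log(1+y)$ is analytic at $y=0$, this yields
$$\log\!\left(\frac{\omega^n}{c_n\,\Omega^n_\epsilon\wedge \overline\Omega^n_\epsilon}\right) = \log(1 + x_1 f) \in x_1\,\cA_{\phg}(\cW_{\epsilon,k}),$$
as required. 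The main obstacle is the constant-matching step in the identification $h^n = c_n\,\Omega^n_\epsilon \wedge \overline\Omega^n_\epsilon$: one must track the signs coming from moving $dz_{n+1}\wedge d\bar z_{n+1}$ past the $(n-1,n-1)$-form $\Omega^{n-1}_\epsilon(\zeta)\wedge \overline\Omega^{n-1}_\epsilon(\bar\zeta)$ and confirm that the resulting constant matches the one already fixed by the Calabi-Yau property of $g_{W_{0,k}}$ at $\epsilon = 0$.
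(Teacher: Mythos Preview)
Your proof is correct and follows exactly the same route as the paper's, which consists of the single sentence ``we see from \eqref{w.18}, \eqref{ac.9b} and \eqref{w.20} that the following lemma holds'' preceded by the observation that $c_n$ is fixed by the Calabi-Yau condition $\omega_{W_{0,k}}^n = c_n\,\Omega^n_0\wedge\overline\Omega^n_0$ at $\epsilon=0$. You have simply spelled out the binomial expansion and the computation of $h^n$ that the paper leaves implicit; the ingredients and the logic are identical.
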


Near $\cH_{2}$ in  $\cX$, we shall check now that $\frac{\sqrt{-1}}2 \pa \db u_{\epsilon,k}$ is behaving asymptotically like $\omega_{W_{0,k}}$.  First, recall from \S~\ref{ac.0} the diffeomorphism 
$\varphi_{\epsilon}: \overline{\cU}_{0}\to \overline{\cU}_{\epsilon}$ such that \eqref{ac.8} and \eqref{ac.9} hold, where $\overline{\cU}_{\epsilon}= \overline{\cU}\cap \bV_{\epsilon,k}$ and $\overline{\cU}\subset \overline{\bbC^n}$ is an open neighborhood of $\beta^{-1}(F_k)\subset \pa \overline{\bbC^n}$.  Consider now a small open neighborhood $\cU^{12}$ of $H_{1}\cap H_{2}$ in  $\tX$ and define $\cU^{12}_{\epsilon}$ to be the restriction of $\cU^{12}$ to $\tW_{\epsilon,k}$ in $\tX$.   Then, in the coordinates $(\zeta,z_{n+1})$, we see that the diffeomorphism $\varphi_{\epsilon}$ induces a diffeomorphism $\varphi^{12}_{\epsilon}: \cU^{12}_0\to \cU^{12}_{\epsilon}$ such that 
\begin{equation}
   (\varphi^{12}_{\epsilon})^* J_{\epsilon}- J_0\in |\zeta|^{-k} \cA_{\phg}(\cU^{12}_0; \End({}^{w}T\cU^{12}_0)),
\label{w.22}\end{equation}
and  
\begin{equation}
 \log\left( \frac{(\varphi^{12}_{\epsilon})^* \Omega^n_{\epsilon}}{ \Omega^n_0}\right) \in |\zeta|^{-k}\cA_{\phg}(\cU^{12}_0), 
 \label{w.23}\end{equation}
 where $J_{\epsilon}$ denotes the complex structure of $W_{\epsilon,k}$. 
Moreover, from \eqref{ac.9b} and \eqref{ac.12}, we see that 
  \begin{equation}
  (\varphi^{12}_{\epsilon})^* \omega_{W_{\epsilon,k}}- \omega_{W_{0,k}}  \in |\zeta|^{-\mu\frac{n-k}{n-1}}\cA_{\phg}(\cU^{12}_0; \Lambda^2({}^{w}T^*\cU^{12}_0)).
  \label{w.24}\end{equation}
  \begin{remark}
  The map $\varphi^{12}_{\epsilon}: \cU^{12}_0\to \cU^{12}_{\epsilon}$ is a diffeomorphism for the smooth structure induced by $\tX$.  For the one induced by $\cX$, it is a diffeomorphism away from $\cH_2$, but is only a polyhomogeneous map if we include $\cU^{12}\cap \cH_2$.  On the other hand, notice that $\cA_{\phg}(\tW_{\epsilon,k};E)=\cA_{\phg}(\cW_{\epsilon,k};E)$, so in \eqref{w.22}, \eqref{w.23} and \eqref{w.24}, we can interchangeably  use the smooth structures induced by $\tX$ or $\cX$.   
  \label{w.24b}\end{remark}
  
  On the other hand,  let $\cU_2$ be an open set such that $\cU_2\cap \cH_1= \emptyset$  and $\cH_2 \subset \cU^{12}\cup \cU_2$.  Then set $\cU_{2,\epsilon}:= \cU_2\cap \tW_{\epsilon,k}$.  Using the fact that on $\beta\circ \tbeta(\cU_2)$, $\bbP W_{\epsilon,k}$ and $\bbP W_{0,k}$ are tangent to order $n(k-1)$ on their intersection on $\bbC\bbP^n_a\subset \bbC\bbP^{n+1}_{(1,a)}$, we see, \cf \cite{ChiLi}, that there exists a smooth diffeomorphism $\varphi_{2,\epsilon}: \cU_{2,0}\to \cU_{2,\epsilon}$ for the smooth structure induced by $\overline{\bbC^{n+1}_a}$ such that
\begin{equation}
  \begin{gathered}
  (\varphi^{2}_{\epsilon})^* J_{\epsilon}- J_0\in \xi^{n(k-1)} \cA_{\phg}(\cU_{2,0}; \End({}^{w}T\cU_{2,0})),  \\
  \log\left(\frac{(\varphi^{2}_{\epsilon})^* \Omega^n_{\epsilon}}{ \Omega^n_0}\right) \in \xi^{n(k-1)} \cA_{\phg}(\cU_{2,0}).
  \end{gathered}  
  \label{w.26}\end{equation}  
 Combining the diffeomorphisms $\varphi^{12}$ and $\varphi_2$ then yields the following.
   \begin{theorem}
 There is an open neighborhood $\cV$ of $\cH_2$ in $\cX$ and a polyhomogeneous map $\psi_{\epsilon}: \cW_{0,k}\cap \cV\to \cW_{\epsilon,k}$ which is a homeomorphism and, when restricted to $W_{0,k}\cap \cV$,  a diffeomorphism  such that
\begin{equation}
 \begin{gathered}
  \psi_{\epsilon}^* J_{\epsilon}- J_0\in x_2^{\frac{k(n-1)}{n-k}} \cA_{\phg}(\cW_{0,k}\cap \cV; \End({}^{w}T(W_{0,k}\cap \cV))),  \\
  \log\left( \frac{\psi_{\epsilon}^* \Omega^n_{\epsilon}} {\Omega^n_0}\right) \in x_2^{\frac{k(n-1)}{n-k}} \cA_{\phg}(\cW_{0,k}\cap \cV).
  \end{gathered}  
  \label{w.28}\end{equation}   
\label{w.27}\end{theorem}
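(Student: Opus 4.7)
The plan is to produce $\psi_{\epsilon}$ by gluing $\varphi^{12}_{\epsilon}$ and $\varphi^{2}_{\epsilon}$ together using a polyhomogeneous cutoff. First I would verify that the two decay exponents appearing in \eqref{w.22}--\eqref{w.24} and \eqref{w.26} translate to the same weight $x_{2}^{\frac{k(n-1)}{n-k}}$: near $\cH_{1}\cap \cH_{2}$ one has $x_{2}\asymp |\zeta|^{-\frac{n-k}{n-1}}$, so that $x_{2}^{\frac{k(n-1)}{n-k}}\asymp |\zeta|^{-k}$, while near $\cH_{2}\setminus \cH_{1}$ one has $x_{2}\asymp \xi^{\frac{n(k-1)(n-k)}{k(n-1)}}$, so that $x_{2}^{\frac{k(n-1)}{n-k}}\asymp \xi^{n(k-1)}$. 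Hence the polyhomogeneous errors arising in both sets of estimates can be described uniformly by the weight $x_{2}^{\frac{k(n-1)}{n-k}}$.

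On the nonempty overlap $\cU^{12}\cap \cU_{2}$, the composition $F_{\epsilon}:=(\varphi^{2}_{\epsilon})^{-1}\circ \varphi^{12}_{\epsilon}$ is a self-diffeomorphism of an open subset of $\cW_{0,k}$ which, by \eqref{w.22} and \eqref{w.26}, is a polyhomogeneous perturbation of the identity of order $x_{2}^{\frac{k(n-1)}{n-k}}$. Writing $F_{\epsilon}=\exp(V_{\epsilon})$ for a polyhomogeneous $\Qb$-vector field $V_{\epsilon}$ of the same weighted order, I would pick a polyhomogeneous cutoff $\eta$ equal to $1$ in a smaller neighborhood of $\cH_{1}\cap \cH_{2}$ and $0$ outside $\cU^{12}$, and set
\[
  G_{\epsilon}:=\exp(\eta\, V_{\epsilon}).
\]
Then $G_{\epsilon}$ is a self-homeomorphism of $\cW_{0,k}\cap \cV$ which equals $F_{\epsilon}$ near $\cH_{1}\cap \cH_{2}$, is the identity outside $\cU^{12}$, and is a diffeomorphism on the interior. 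Finally, I would define
\[
  \psi_{\epsilon}:=\varphi^{2}_{\epsilon}\circ G_{\epsilon},
\]
which agrees with $\varphi^{12}_{\epsilon}$ where $\eta\equiv 1$ and with $\varphi^{2}_{\epsilon}$ where $\eta\equiv 0$.

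The estimates in \eqref{w.28} then follow because $G_{\epsilon}$ is a polyhomogeneous perturbation of the identity of order $x_{2}^{\frac{k(n-1)}{n-k}}$, so precomposing $\varphi^{2}_{\epsilon}$ with it introduces at worst a polyhomogeneous error of the same order in $J$ and in the holomorphic volume form; combined with \eqref{w.26} this yields the claimed bounds. Polyhomogeneity of $\psi_{\epsilon}$ is inherited from that of $\eta$, $V_{\epsilon}$, and $\varphi^{2}_{\epsilon}$, and $\psi_{\epsilon}$ is a homeomorphism and an interior diffeomorphism because $\varphi^{2}_{\epsilon}$ is a diffeomorphism and $G_{\epsilon}$ is one by construction. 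The principal obstacle is producing $V_{\epsilon}$ as a legitimate polyhomogeneous $\Qb$-vector field with the correct weight and ensuring that $\exp(\eta V_{\epsilon})$ remains a diffeomorphism up to $\cH_{2}$; this I would carry out in the adapted coordinates $(\zeta,z_{n+1})$ of \eqref{w.12} near $\cH_{1}\cap \cH_{2}$ and in the $(\omega,\xi)$-coordinates of \eqref{w.7} near $\cH_{2}\setminus \cH_{1}$, in each chart extracting $V_{\epsilon}$ from the difference $F_{\epsilon}-\mathrm{id}$ using the local frame for ${}^{\Qb}T\cW_{0,k}$ provided by \eqref{qb.2} and verifying the weighted order in both charts.
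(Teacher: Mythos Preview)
Your approach is precisely what the paper does: its entire proof is the single sentence ``Combining the diffeomorphisms $\varphi^{12}$ and $\varphi_2$ then yields the following,'' and your cut-off/exponential gluing is the standard way to make that combination rigorous; your identification of $|\zeta|^{-k}$ and $\xi^{n(k-1)}$ with the common weight $x_2^{k(n-1)/(n-k)}$ is also correct.

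There is one small point to repair in your formulation. You take $\eta\equiv 1$ near $\cH_1\cap\cH_2$, but $\cU_2\cap\cH_1=\emptyset$, so neither $V_\epsilon$ (which lives only on the overlap $\cU^{12}\cap\cU_2$) nor $\varphi^2_\epsilon$ is defined near $\cH_1\cap\cH_2$; hence the global formula $\psi_\epsilon=\varphi^2_\epsilon\circ G_\epsilon$ is not well-posed there. The fix is to read your construction piecewise: extend $\eta V_\epsilon$ by zero to all of $\cU_2$, set $\psi_\epsilon:=\varphi^2_\epsilon\circ\exp(\eta V_\epsilon)$ on $\cU_2$, set $\psi_\epsilon:=\varphi^{12}_\epsilon$ on the region of $\cU^{12}$ where $\eta\equiv 1$, and observe that the two definitions agree on the overlap where $\eta\equiv 1$ since there $\exp(\eta V_\epsilon)=F_\epsilon=(\varphi^2_\epsilon)^{-1}\circ\varphi^{12}_\epsilon$. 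With this adjustment everything you wrote goes through.
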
 

\begin{corollary}
The potential $u_{\epsilon,k}$ of \eqref{w.16} can be extended to a function $u_{\epsilon,k}\in x^{-\frac{2k(n-1)}{n(k-1)}}\cA_{\phg}(\cW_{\epsilon,k})$ in such a way that $\frac{\sqrt{-1}}2\pa \db u_{\epsilon,k}$ is the Kähler form of a warped $\QAC$-metric with 
\begin{equation}
      \psi_{\epsilon}^*\left( \frac{\sqrt{-1}}2 \pa\db u_{\epsilon,k}\right)- \frac{\sqrt{-1}}2\pa\db u_{0,k}\in x_2^{\mu}\cA_{\phg}(\cW_{0,k}\cap\cV;\Lambda^2({}^{w}T^*\cW_{\epsilon,k})) 
\label{w.29}\end{equation}
and 
\begin{equation}
    \log \left( \frac{(\frac{\sqrt{-1}}2\pa\db u_{\epsilon,k})^n}{c_n \Omega^n_{\epsilon}\wedge \overline{\Omega}^n_{\epsilon}} \right) \in x_1 x_2^{\mu} \cA_{\phg}(\cW_{\epsilon,k}).  
\label{w.30}\end{equation}
\label{w.31}\end{corollary}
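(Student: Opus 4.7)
The plan is to extend $u_{\epsilon,k}$ in two stages — first gluing the formula \eqref{w.16} on a neighborhood of $\cH_1$ to the transplant of $u_{0,k}$ via $\psi_\epsilon^{-1}$ on a neighborhood of $\cH_2$, then extending across the compact interior by a standard plurisubharmonic convexity argument — and then to verify the two asymptotic estimates.

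The formula \eqref{w.16} defines $u_{\epsilon,k}$ unambiguously on $W_{\epsilon,k}\cap\{z_{n+1}\ne 0\}$ by Remark~\ref{w.16b}, and its polyhomogeneity at $\cH_1$ follows from Theorem~\ref{ac.11}. Near $\cH_2$, writing $\phi_{\epsilon,k}=r^2+\varepsilon$ with $\varepsilon$ of order $x_2^{\mu-2}$ (since $r\sim x_2^{-1}$), one computes $\psi_\epsilon^*u_{\epsilon,k}-u_{0,k}\in x_1^2\, x_2^\mu\cdot x^{-2k(n-1)/(n(k-1))}\cA_\phg$, a term subdominant to $u_{0,k}$, so interpolating between \eqref{w.16} and $u_{0,k}\circ\psi_\epsilon^{-1}$ via a smooth cutoff in $x_1$ preserves the positivity of $\tfrac{\sqrt{-1}}{2}\pa\db$ thanks to Lemma~\ref{w.17}. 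To extend across the compact interior — including the locus $\{z_{n+1}=0\}\subset W_{\epsilon,k}$ where \eqref{w.16} fails to be smooth — I would follow the convexity trick of \cite[Lemma~4.1]{CDR2016}: compose the near-infinity potential with a smooth convex increasing function $f$ satisfying $f(t)=t$ for $t$ large, and take a regularized maximum against a large multiple of some smooth strictly plurisubharmonic exhaustion of $W_{\epsilon,k}\cong\bbC^n$. The resulting potential is globally strictly plurisubharmonic and agrees with the near-infinity construction outside a compact set; Lemma~\ref{w.17} near $\cH_1$ combined with a parallel analysis near $\cH_2$ via Theorem~\ref{w.27} then shows $\tfrac{\sqrt{-1}}{2}\pa\db u_{\epsilon,k}$ is a warped $\QAC$ K\"ahler form.

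Estimate \eqref{w.29} then follows by applying $\tfrac{\sqrt{-1}}{2}\pa\db$ to the identity for $\psi_\epsilon^*u_{\epsilon,k}-u_{0,k}$ above, converting to the warped $\QAC$ cotangent basis \eqref{gb.13}, and accommodating the change of complex structure via \eqref{w.28}. For \eqref{w.30}, Lemma~\ref{w.21} yields $O(x_1)$ decay near $\cH_1$, and Theorem~\ref{w.27} combined with the Calabi-Yau identity $\omega_{W_{0,k}}^n=c_n\Omega_0^n\wedge\overline{\Omega}_0^n$ yields $O(x_2^\mu)$ decay near $\cH_2$ (using $\tfrac{k(n-1)}{n-k}\ge\mu$, so the volume-form correction from \eqref{w.28} is subdominant to the metric correction from \eqref{w.29}). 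To upgrade these into the multiplicative $O(x_1 x_2^\mu)$ in the corner, I would observe that substituting $r^2$ for $\phi_{\epsilon,k}$ in \eqref{w.16} recovers precisely $u_{0,k}$, so the intermediate ``cone-warped'' Hermitian metric obtained by replacing $g_{V_{\epsilon,k}}$ by $g_{V_{0,k}}$ in \eqref{wa.1} is itself Calabi-Yau with respect to $\Omega_0$; the deviation of $\tfrac{\sqrt{-1}}{2}\pa\db u_{\epsilon,k}$ from this intermediate model then decomposes into the warping error $Q$ of \eqref{w.18} (carrying a factor of $x_1$) and the smoothing correction $\varepsilon$ (carrying a factor of $x_2^\mu$), which contribute independently to the Ricci potential.

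The main obstacle is precisely this last step: extracting the multiplicative $O(x_1 x_2^\mu)$ decay in the corner rather than the weaker $O(\max(x_1,x_2^\mu))$. This requires careful bookkeeping of polyhomogeneous expansions at both $\cH_1$ and $\cH_2$ and simultaneous exploitation of the two intermediate Calabi-Yau identities — the cone-warped model with respect to $\Omega_0$ and the smoothing-warped model with respect to $\Omega_\epsilon$ — in order to cancel the potentially dominant mixed terms between $Q$ and $\varepsilon$.
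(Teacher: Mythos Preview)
Your overall approach — gluing \eqref{w.16} near $\cH_1$ to a transplant of $u_{0,k}$ near $\cH_2$, then filling in the compact interior by a convexity argument — is exactly what the paper does.  The paper's interior extension is the same idea as \cite[Lemma~4.1]{CDR2016}: compose with a convex nondecreasing $\eta$ equal to the identity for large values, then add a small multiple of the restricted Euclidean potential supported on a compact set.

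The ``main obstacle'' you identify, however, is not an obstacle at all.  You are worried that knowing the Ricci potential lies in $x_1\cA_{\phg}$ near $\cH_1$ (Lemma~\ref{w.21}) and in $x_2^{\mu}\cA_{\phg}$ near $\cH_2$ (from \eqref{w.29}, \eqref{w.28}, and the Calabi--Yau identity for $g_{W_{0,k}}$) does not immediately give $x_1 x_2^{\mu}\cA_{\phg}$ at the corner.  For generic smooth functions this concern would be legitimate, but for \emph{polyhomogeneous} functions it is automatic: a function $f\in\cA_{\phg}^{\cE}(\cW_{\epsilon,k})$ has a joint expansion at the corner, and the conditions $f/x_1\in\cA_{\phg}$ and $f/x_2^{\mu}\in\cA_{\phg}$ constrain the index sets $E_1$ and $E_2$ \emph{independently}, forcing the leading exponent at $\cH_1$ to be $\ge 1$ and at $\cH_2$ to be $\ge\mu$, whence $f/(x_1 x_2^{\mu})$ is bounded polyhomogeneous.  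Since the Ricci potential is manifestly polyhomogeneous (being built from polyhomogeneous data $u_{\epsilon,k}$ and $\Omega^n_\epsilon$), the paper's one-line deduction ``combining this with Lemma~\ref{w.21} gives \eqref{w.30}'' is complete as stated.  Your elaborate bookkeeping with intermediate Calabi--Yau models and cancellation of mixed terms between $Q$ and the smoothing error is therefore unnecessary.
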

\begin{proof}
By Theorem~\ref{ac.11}, we have that
$$
     (\varphi_{\epsilon}^{12})^* u_{\epsilon,k}- u_{0,k}\in x_2^{\mu-2}\cA_{\phg}(\cW_{0,k}\cap \cV).
$$
Thus, we can extend $u_{\epsilon,k}$ to a smooth function on $W_{\epsilon,k}$ such that 
$$
    \psi_{\epsilon}^* u_{\epsilon,k} -u_{0,k}\in x_2^{\mu-2}\cA_{\phg}(\cW_{0,k}\cap\cV).
$$
Moreover, by Theorem~\ref{w.27}, we have that \eqref{w.29} holds.  Hence, combining this with Lemma~\ref{w.21} gives \eqref{w.30}.  In particular, by Lemma~\ref{w.17} and \eqref{w.29}, we have that 
$$
    \frac{\sqrt{-1}}{2}\pa \db u_{\epsilon}>0 
$$
outside a compact set of $W_{\epsilon,k}$.  In other words, there is a constant $C>0$ such that $\frac{\sqrt{-1}}{2} \pa\db u_{\epsilon,k} >0$ at points $p\in W_{\epsilon,k}$ where 
$u_{\epsilon,k}(p)>C$.  To see that we can choose $u_{\epsilon,k}$ such that $\frac{\sqrt{-1}}{2}\pa \db u_{\epsilon}>0$ everywhere on $W_{\epsilon,k}$, let $\eta\in\CI(\bbR)$ be a non-decreasing convex function such that
$$
    \eta(t) = \left\{ \begin{array}{ll} t, & \mbox{if} \; t\ge C+2, \\
                                                   C+\frac{3}2, & \mbox{if} \; t\le C+1. \end{array}  \right.
$$
For such a function, $\eta\circ u_{\epsilon,k}$ is equal to $u_{\epsilon,k}$ outside a compact set.  Moreover, everywhere on $W_{\epsilon,k}$, we have that
\begin{equation}
  \frac{\sqrt{-1}}2 \pa \db \eta\circ u_{\epsilon,k} = \frac{\sqrt{-1}}2 \eta''(u_{\epsilon,k}) \pa u_{\epsilon,k}\wedge \db u_{\epsilon,k} + \frac{\sqrt{-1}}2 \eta'(u_{\epsilon,k}) \pa\db u_{\epsilon,k}\ge
   \frac{\sqrt{-1}}2 \eta'(u_{\epsilon,k}) \pa\db u_{\epsilon,k}\ge 0,
\label{w.32}\end{equation}
since $\eta', \eta''\ge 0$.  Now, let $w_{\epsilon,k}$ be the restriction to $W_{\epsilon,k}$ of the potential 
$$
   \sum_{i=1}^{n+1} |z_i|^2
$$
of the Euclidean metric on $\bbC^{n+1}$ and let $\phi\in \CI_{c}(W_{\epsilon,k})$ be a smooth function such that 
$$
   \phi(p)= \left\{ \begin{array}{ll} 1, & \mbox{if} \; u_{\epsilon,k}(p)\le C+2, \\
                                                   0, & \mbox{if} \; u_{\epsilon,k}(p)\ge C+3. \end{array}  \right.
$$
Then for $\delta>0$,
\begin{equation}
   \widetilde{u}_{\epsilon,k}:=\eta\circ u_{\epsilon,k}+ \delta\phi w_{\epsilon,k}
\label{w.33}\end{equation}
is equal to $u_{\epsilon,k}$ outside a compact set.  Moreover, thanks to \eqref{w.32}, at a point $p$ where $u_{\epsilon,k}(p)< C+2$, we have that
$$
          \frac{\sqrt{-1}}2\pa\db \widetilde{u}_{\epsilon,k} \ge \delta \frac{\sqrt{-1}}2\pa\db w_{\epsilon,k}>0,
$$
while at points where $u_{\epsilon,k}(p)>C+3$,
$$
 \frac{\sqrt{-1}}2\pa\db \widetilde{u}_{\epsilon,k} = \frac{\sqrt{-1}}2\pa\db u_{\epsilon,k} >0.
$$
On the other hand, in the compact region where $C+2\le u_{\epsilon,k}(p)\le C+3$, we have that $\frac{\sqrt{-1}}2 \pa\db \eta\circ u_{\epsilon,k}= \frac{\sqrt{-1}}2\pa\db u_{\epsilon,k}>0$, so taking $\delta>0$ sufficiently small, we can also ensure that $\frac{\sqrt{-1}}2\pa\db \widetilde{u}_{\epsilon,k}>0$ there too, and hence everywhere on $W_{\epsilon,k}$.  Thus, replacing $u_{\epsilon,k}$ with $\widetilde{u}_{\epsilon,k}$ yields the desired potential.    
\end{proof}

The previous corollary gives us a Kähler warped $\QAC$-metric with Kähler form $\frac{\sqrt{-1}}2\pa\db u_{\epsilon,k}$ and with Ricci potential 
\begin{equation}
 r_{\epsilon,k}:=\log \left( \frac{(\frac{\sqrt{-1}}2\pa\db u_{\epsilon,k})^n}{c_n \Omega^n_{\epsilon}\wedge \overline{\Omega}^n_{\epsilon}} \right) \in x_1 x_2^{\mu} \cA_{\phg}(\cW_{\epsilon,k})
 \label{w.34}\end{equation}
decaying at infinity.  To obtain a Calabi-Yau warped $\QAC$-metric, it suffices then to solve the complex Monge-Ampère equation
\begin{equation}
 \log \left( \frac{(\frac{\sqrt{-1}}2\pa\db(u_{\epsilon,k}+u))^n}{c_n\Omega_{\epsilon}^n\wedge\overline{\Omega}^n_{\epsilon}} \right)=-r_{\epsilon,k}.
\label{w.35}\end{equation}
In order to do this, we need first to establish some mapping properties for the Laplacian associated to a warped $\QAC$-metric.

\section{Mapping properties of the Laplacian} \label{mp.0}

The result of \cite{DM2014} about the mapping properties of the Laplacian of a $\QAC$-metric has a direct analogue for warped $\QAC$-metrics.  The strategy of the proof is the same as in \cite{DM2014}, although some of the details are slightly different.  For this reason, we will recall the general strategy of \cite{DM2014}, but we will mostly focus on the details that differ from \cite{DM2014}.     

First, recall from \cite{DM2014} that if $g$ is a complete Riemannian metric on a manifold $Z$ and $h$ is a positive smooth function on $Z$, then one can consider the measure $d\mu= h^2 dg$, where $dg$ is the volume density of $g$.  In the terminology of \cite{GS2005}, the triple $(Z,g,\mu)$ is a \textbf{complete weighted Riemannian manifold}.  On such a manifold, we use the Riemannian metric $g$ to define the distance $d(p,q)$ between two points of $p,q\in Z$, and we use the notation
\begin{equation}
    B(p,r):= \{ q\in Z \; | \; d(p,q)<r\}
\label{mp.1}\end{equation}
to denote the geodesic ball of radius $r$ centred at $p\in Z$.  However, to measure the volume of such a  ball, we use the weighted measure $\mu$ instead of the volume density of $g$.  Moreover, the $L^2$-inner product on functions we consider is the one induced by $\mu$, namely
\begin{equation}
  \langle u, v\rangle_{\mu}:= \int_Z  uv d\mu.
\label{mp.2}\end{equation}
Let $\nabla$ be the Levi-Civita connection of $g$ and $\Delta=-\operatorname{div}\circ \nabla$ the Laplacian of $g$.  For $\cR$ a function, we want to consider the operator
$\cL:= \Delta+\cR$ and its \textbf{Doob transform} with respect to $h$, 
\begin{equation}
    \mathcal{\cL}:=  h^{-1}\circ \cL \circ h = \Delta_\mu +V +\cR,
\label{mp.3}\end{equation}
where $V:= -\frac{\Delta h}{h}$ and $\Delta_{\mu}= \nabla^{*,\mu}\nabla$ with $\nabla^{*,\mu}$ the adjoint of $\nabla$ with respect to the $L^2$-inner product \eqref{mp.2} and the $L^2$-inner product on forms given by 
$$
        \langle \eta_1,\eta_2\rangle_{\mu}:= \int_Z (\eta_1(z),\eta_2(z))_{g(z)} d\mu(z).
$$ 
Let $H_{\cL}(t,z,z')$ and $H_{\Delta+V}(t,z,z')$ denote the heat kernels of $\cL$ and $\Delta+V$ with respect to the volume density $dg$,
$$
           (e^{-t\cL}u)(z)= \int_Z H_{\cL}(t,z,z') u(z') dg(z'), \quad  (e^{-t( \Delta+V)}u)(z)= \int_Z H_{\Delta+V}(t,z,z') u(z') dg(z'),
$$
and let 
$$
     G_{\cL}(z,z')= \int_0^{\infty} H_{\cL}(t,z,z')dt \quad \mbox{and}\quad G_{\Delta+V}(z,z')= \int_0^{\infty} H_{\Delta+V}(t,z,z')dt
$$
be the corresponding Green's functions.  
Let also $H_{\Delta_{\mu}}(t,z,z')$ be the heat kernel of $\Delta_{\mu}$ with respect to the measure $\mu$,
$$
  (e^{-t\Delta_{\mu}}u)(xz)= \int_Z H_{\Delta_{\mu}}(t,z,z') u(z') d\mu(z')
$$
with corresponding Green's function 
$$
  G_{\Delta_{\mu}}(z,z')= \int_0^{\infty} H_{\Delta_{\mu}}(t,z,z')dt.
$$
These heat kernels and Green's functions are related as follows.  
\begin{lemma}[Theorem~3.12 in \cite{DM2014}]
If $\cR\ge V$, then 
$$
       | H_{\cL}(t,z,z')|\le H_{\Delta+V}(t,z,z')= h(z) h(z') H_{\Delta_{\mu}}(t,z,z'),
$$
and hence
$$
      |G_{\cL}(z,z')|\le G_{\Delta+V}(z,z')= h(z)h(z')G_{\Delta_{\mu}}.
$$
\label{mp.4}\end{lemma}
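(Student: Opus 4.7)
The plan is to reduce the lemma to two independent facts: a \emph{Doob transform identity} giving the equality $H_{\Delta+V}(t,z,z')=h(z)h(z')H_{\Delta_\mu}(t,z,z')$, and a \emph{pointwise domination} $|H_{\cL}(t,z,z')|\le H_{\Delta+V}(t,z,z')$ that uses only the hypothesis $\cR\ge V$.

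First I would verify the conjugation identity $h^{-1}\circ(\Delta+V)\circ h=\Delta_\mu$ by direct computation. Using $\Delta=-\diver\circ\nabla$, the product rule gives $\Delta(hu)=h\Delta u+(\Delta h)u-2\nabla h\cdot\nabla u$, while an expansion of $\Delta_\mu u=-h^{-2}\diver(h^2\nabla u)$ produces $\Delta_\mu u=\Delta u-2h^{-1}\nabla h\cdot\nabla u$. Combining these and substituting $V=-\Delta h/h$ collapses the zeroth-order terms to zero and yields the claim. The multiplication operator $M_h:L^2(Z,d\mu)\to L^2(Z,dg)$ is a unitary isomorphism (since $d\mu=h^2 dg$), and the conjugation identity says $\Delta+V=M_h\,\Delta_\mu\,M_h^{-1}$. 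Applying the functional calculus gives $e^{-t(\Delta+V)}=M_h\,e^{-t\Delta_\mu}\,M_h^{-1}$. Translating this operator equation into integral kernels—being careful that $H_{\Delta+V}(t,\cdot,\cdot)$ is the kernel with respect to $dg$ while $H_{\Delta_\mu}(t,\cdot,\cdot)$ is with respect to $d\mu=h^2\,dg$—gives exactly $H_{\Delta+V}(t,z,z')=h(z)h(z')H_{\Delta_\mu}(t,z,z')$.

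Next I would establish the pointwise inequality $|H_{\cL}(t,z,z')|\le H_{\Delta+V}(t,z,z')$. Write $\cL=(\Delta+V)+(\cR-V)$, where by hypothesis $\cR-V\ge 0$. The Duhamel/Feynman--Kac comparison principle says that adding a nonnegative potential to an operator can only decrease the (positive) heat kernel. Concretely, for nonnegative initial data $u_0$, let $u(t)=e^{-t\cL}u_0$ and $v(t)=e^{-t(\Delta+V)}u_0$; then $w:=v-u$ satisfies $(\pa_t+\Delta+V)w=(\cR-V)u\ge 0$ with $w(0)=0$, and the parabolic maximum principle on complete manifolds with bounded geometry (which applies here since the underlying $\Qb$-metric is complete with bounded geometry by Lemma~\ref{qb.6}, and $V$, $\cR$ are controlled via the computation $V=-\Delta h/h$ carried out elsewhere in the paper) gives $w\ge 0$, hence $0\le H_{\cL}\le H_{\Delta+V}$. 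The positivity of $H_{\cL}$ itself follows from the same maximum principle applied to $u_0\ge 0$, so the absolute value sign is automatic in our scalar setting.

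Finally, integrating both heat kernel identities/inequalities in $t$ over $(0,\infty)$ produces the Green's function statements, provided both Green's functions are finite—which is a standing assumption in the setting where the lemma is invoked. The step most likely to require care is the maximum principle on the noncompact space $Z=W_{\epsilon,k}$: one must rule out loss of the domination at infinity, and this is where it is essential that $(Z,g_{\Qb})$ is complete with bounded geometry so that the heat semigroup is stochastically complete enough for the comparison to pass. In the cited framework of Grigor'yan--Saloff-Coste \cite{GS2005} and Degeratu--Mazzeo \cite{DM2014}, this is carried out by approximating with cutoff functions on geodesic balls $B(p,r)$ and letting $r\to\infty$, an argument I would invoke verbatim from \cite[Thm.~3.12]{DM2014} rather than reproduce.
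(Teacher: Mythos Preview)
The paper does not prove this lemma; it is quoted verbatim from \cite[Theorem~3.12]{DM2014} and carries no proof in the paper. Your sketch is a correct outline of the standard argument there: the equality $H_{\Delta+V}=h\otimes h\cdot H_{\Delta_\mu}$ is exactly the Doob-transform identity $h^{-1}(\Delta+V)h=\Delta_\mu$ read at the level of semigroups (with the unitary $M_h:L^2(d\mu)\to L^2(dg)$), and the domination $|H_\cL|\le H_{\Delta+V}$ is the Feynman--Kac/Duhamel comparison for the nonnegative perturbation $\cR-V\ge 0$.

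One small wrinkle: your verification that $H_\cL\ge 0$ (so that the absolute value is harmless) is phrased as ``the same maximum principle applied to $u_0\ge 0$'', but as written this is slightly circular, since your comparison for $w=v-u$ already used $u\ge 0$. The clean order is to first establish positivity preservation of $e^{-t\cL}$ for a Schr\"odinger operator $\cL=\Delta+\cR$ with real (possibly sign-indefinite) potential---for instance via Kato's inequality or the Trotter product formula---and only then run the Duhamel comparison. Also, the bounded-geometry input you invoke should be Proposition~\ref{gb.12} (for the warped $\QAC$-metric $g$) rather than Lemma~\ref{qb.6} (which is about $\Qb$-metrics). Neither point is a genuine gap; with those adjustments your argument matches the one in \cite{DM2014}.
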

Now, to bound $H_{\Delta_{\mu}}$, one can use the method of Grigor'yan and Saloff-Coste involving the following notions.
\begin{definition}
The complete weighted Riemannian manifold $(Z,g,\mu)$ satisfies 
\begin{itemize}
\item[$(VD)_{\mu}$] the weighted volume doubling property if there exists $C_D>0$ such that 
$$
      \mu(B(p,2r))\le C_D \mu(B(p,r))
$$
for all $p\in Z$ and all $r>0$;
\item[$(PI)_{\mu,\delta}$] the uniform Poincaré inequality with parameter $\delta\in (0,1]$ if there exists a constant $C_{P}>0$ such that
$$
       \int_{B(p,r)} (f-\overline{f})^2d\mu \le C_P r^2 \int_{B(p,\delta^{-1}r)} |d f|_g^2 d\mu
$$  
for all $f\in W^{1,2}_{loc}(Z)$, all $p\in Z$ and all $r>0$;
\item[$(PI)_{\mu}$] the uniform weighted Poincaré inequality if we can take $\delta=1$ in the previous statement.
\end{itemize}
\label{mp.5}\end{definition}

\begin{theorem}[Theorem~2.7 in \cite{GS2005}]
Let $(Z,g,\mu)$ be a complete weighted Riemannian manifold satisfying $(VD)_{\mu}$ and $(PI)_{\mu}$.  Then 
$$
      H_{\Delta_{\mu}}(t,z,z')\asymp \left( \mu(B(z,\sqrt{t})) \mu(B(z',\sqrt{t})) \right)^{-\frac12} e^{-c \frac{d(z,z')^2}{t}}
$$
for all $(t,z,z')\in (0,\infty)\times Z\times Z$.
\label{mp.6}\end{theorem}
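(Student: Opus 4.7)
The plan is to derive the two-sided Gaussian estimate by establishing the parabolic Harnack inequality (PHI) from $(VD)_\mu + (PI)_\mu$, and then extracting upper and lower bounds from it. This follows the Grigor'yan--Saloff-Coste program (itself inspired by Moser's work), adapted to the weighted setting.

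Step 1 (on-diagonal upper bound). First I would combine $(VD)_\mu$ with $(PI)_\mu$ to obtain a local Sobolev inequality on geodesic balls: on $B(p,r)$ one has an inequality of the form
\begin{equation*}
\left( \fint_{B(p,r)} |f|^{2\nu/(\nu-2)}\,d\mu \right)^{(\nu-2)/\nu} \le C r^2 \fint_{B(p,r)} \left(|\nabla f|_g^2 + r^{-2}|f|^2\right) d\mu,
\end{equation*}
where $\nu$ is an effective dimension coming from the doubling exponent in $(VD)_\mu$. Via Moser/Nash iteration applied to the semigroup $e^{-t\Delta_\mu}$ acting on $L^2(Z,\mu)$, this yields the on-diagonal bound
\begin{equation*}
H_{\Delta_\mu}(t,z,z) \le \frac{C}{\mu(B(z,\sqrt{t}))},
\end{equation*}
and symmetry of the heat kernel plus $(VD)_\mu$ upgrade this to the off-diagonal bound $H_{\Delta_\mu}(t,z,z') \le C\,(\mu(B(z,\sqrt t))\mu(B(z',\sqrt t)))^{-1/2}$ with Gaussian weight removed.

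Step 2 (off-diagonal Gaussian decay). To install the factor $e^{-c\,d(z,z')^2/t}$ I would apply Davies' perturbation method: for a bounded Lipschitz function $\psi$ on $Z$, study the conjugated semigroup $e^{\psi}e^{-t\Delta_\mu}e^{-\psi}$. A standard differentiation in $t$ and Cauchy--Schwarz in $L^2(Z,\mu)$ give the bound
\begin{equation*}
\| e^{\psi}e^{-t\Delta_\mu}e^{-\psi}\|_{L^2\to L^2} \le e^{t\,\|\,|\nabla \psi|_g^2\,\|_\infty}.
\end{equation*}
Combining this with the on-diagonal estimate of Step 1 and choosing $\psi = \lambda\, d(\cdot, z')\wedge N$ with $N\to\infty$ and $\lambda = d(z,z')/(2t)$ optimized yields the off-diagonal Gaussian upper bound.

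Step 3 (lower bound via PHI). The harder half is the lower bound. Here I would establish the parabolic Harnack inequality: for any non-negative solution $u$ of $(\partial_t + \Delta_\mu) u = 0$ on a parabolic cylinder $(s-r^2, s)\times B(p,r)$,
\begin{equation*}
\sup_{Q_-} u \le C \inf_{Q_+} u
\end{equation*}
on suitable smaller cylinders $Q_\pm$. This is obtained through a Moser iteration for sub- and supersolutions, where $(PI)_\mu$ controls the ``log-lemma'' (a John--Nirenberg-type bound for $\log u$) and $(VD)_\mu$ allows the iteration of $L^p$-bounds. Once PHI is in hand, one gets the on-diagonal lower bound $H_{\Delta_\mu}(t,z,z)\ge c/\mu(B(z,\sqrt t))$ from stochastic completeness (conservation of mass, which itself follows from $(VD)_\mu$ with polynomial volume growth). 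Finally, a standard chaining argument using PHI along a sequence of balls of radius $\sqrt t$ connecting $z$ and $z'$ converts the on-diagonal lower bound into the full Gaussian lower bound, completing the proof.

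The main obstacle is Step 3: the Moser iteration for weighted manifolds requires careful bookkeeping of the dependence on $\mu$ through $(VD)_\mu$ and $(PI)_\mu$, and in particular the log-lemma and the crossover from $L^2$-means to $\inf/\sup$ estimates are the technical core; once PHI is secured, Steps 1--2 and the chaining argument are essentially formal.
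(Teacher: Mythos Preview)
The paper does not prove this statement at all: it is quoted verbatim as Theorem~2.7 of \cite{GS2005} and used as a black box, with no proof environment following it. So there is nothing in the paper to compare your proposal against.

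That said, your sketch is a faithful outline of the Grigor'yan--Saloff-Coste argument (and its antecedents in Saloff-Coste, Moser, Davies): $(VD)_\mu + (PI)_\mu$ yield a scale-invariant local Sobolev inequality, Moser iteration gives the on-diagonal upper bound, Davies' exponential perturbation supplies the Gaussian tail, and the parabolic Harnack inequality (equivalent to $(VD)_\mu+(PI)_\mu$ in this setting) produces the lower bound via chaining. One small caveat: in Step~3 you invoke stochastic completeness ``from $(VD)_\mu$ with polynomial volume growth,'' but $(VD)_\mu$ alone does not imply polynomial growth globally; stochastic completeness here follows more directly from the upper Gaussian bound itself (integrating it against $d\mu$ using $(VD)_\mu$), or one simply reads the on-diagonal lower bound off the PHI applied to $u(t,\cdot)=H_{\Delta_\mu}(t_0+t,z,\cdot)$ without any appeal to conservativeness. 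Otherwise the proposal is sound as a summary of the cited reference.
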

\begin{corollary}
If the complete weighted Riemannian manifold $(Z,g,\mu)$ satisfies $(VD)_{\mu}$ and $(PI)_{\mu}$, then 
$$
       H_{\Delta+V}(t,z,z') \asymp h(z)h(z')\left( \mu(B(z,\sqrt{t})) \mu(B(z',\sqrt{t})) \right)^{-\frac12} e^{-c \frac{d(z,z')^2}{t}}.
$$ 
\label{mp.7}\end{corollary}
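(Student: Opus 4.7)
The corollary is essentially a direct combination of the two results immediately preceding it, so the plan is short. First I would invoke the identity
\begin{equation*}
H_{\Delta+V}(t,z,z') = h(z)\,h(z')\,H_{\Delta_{\mu}}(t,z,z'),
\end{equation*}
which is the equality part of Lemma~\ref{mp.4}. This identity is the content of the Doob transform: since $\cL = h\circ (\Delta_\mu + V + \cR)\circ h^{-1}$ intertwines with multiplication by $h$, and here we take $\cR=0$ so that $\Delta+V$ and $\Delta_\mu$ are conjugate via $h$, and the Jacobian between the reference measures $dg$ and $d\mu = h^2\,dg$ supplies the second factor of $h$. (This was already proved in Lemma~\ref{mp.4}, so I would simply cite it rather than rederive it.)

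Next I would substitute the Gaussian two-sided bound
\begin{equation*}
H_{\Delta_{\mu}}(t,z,z') \asymp \bigl(\mu(B(z,\sqrt{t}))\,\mu(B(z',\sqrt{t}))\bigr)^{-\frac12} e^{-c\,d(z,z')^2/t}
\end{equation*}
provided by Theorem~\ref{mp.6} under the hypotheses $(VD)_{\mu}$ and $(PI)_{\mu}$. Multiplying through by the positive factor $h(z)h(z')$ preserves the two-sided $\asymp$ bound (with the same implicit constants, and with possibly different constants $c$ for the upper and lower Gaussian exponents, absorbed into the $\asymp$ notation as in the statement).

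There is no real obstacle here; the step that does require a small remark is verifying that the positive pointwise factor $h(z)h(z')$ can be pulled out of the $\asymp$ without altering the form of the estimate. Since $h$ is a fixed positive smooth function on $Z$ and both sides of the equivalence $H_{\Delta_\mu}(t,z,z')\asymp\cdots$ are multiplied by the same factor $h(z)h(z')>0$, the two-sided bound transfers verbatim. Combining these two steps yields exactly the stated estimate for $H_{\Delta+V}(t,z,z')$, completing the proof.
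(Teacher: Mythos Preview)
Your proposal is correct and matches the paper's intended argument: the corollary is stated without proof precisely because it follows immediately by combining the equality $H_{\Delta+V}(t,z,z') = h(z)h(z')H_{\Delta_{\mu}}(t,z,z')$ from Lemma~\ref{mp.4} with the two-sided Gaussian bound of Theorem~\ref{mp.6}. There is nothing to add.
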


We want to apply this result to the case where $Z= W_{\epsilon,k}$ with $\epsilon\ne 0$, $g$ is a warped $\QAC$-metric and $d\mu= d\mu_{a,b}= \rho^a w^b dg$.  For this purpose, we need to know that $(VD)_{\mu}$ and $(PI)_{\mu}$ hold for this particular complete weighted Riemannian manifold for suitable choices of $a$ and $b$.  To check this, we  introduce the following notation.
\begin{definition}
Declare $0\in W_{\epsilon,k}\subset \bbC^{n+1}$ to be the basepoint of $W_{\epsilon,k}$.  A ball centred at $0$ is called \textbf{anchored} and we denote its volume by
$$
   \cA(R;a,b):= \mu_{a,b}(B(0,R)).
$$
Fix $c\in(0,1)$.  With respect to this choice, a ball $B(p,r)$ is said to be \textbf{remote} if $r<c d(0,p)$, in which case we use the notation
$$
         \cR(p,r;a,b):= \mu_{a,b}(B(p,r)).
$$
If $B(p,r)$ is any ball, possibly neither anchored nor remote, we use the notation
$$
    \cV(p,r;a,b):= \mu_{a,b}(B(p,r)).
$$
\label{mp.8}\end{definition}

\begin{proposition}
For $R>1$, $a\ne -2n$ and $b\ne -2n+2$, we have that 
$$
     \cA(R;a,b)\asymp \left\{\begin{array}{ll}1+R^{a+2n}+ R^{\frac{2n}k+a -b\left( \frac{n(k-1)}{k(n-1)} \right)}, & \mbox{if} \; \frac{2n}k+a -b\left( \frac{n(k-1)}{k(n-1)}\right)\ne 0, \\
        1+R^{a+2n}+ \log R, & \mbox{if} \; \frac{2n}k+a -b\left( \frac{n(k-1)}{k(n-1)}\right)= 0.
        \end{array}  \right.
$$
\label{mp.9}\end{proposition}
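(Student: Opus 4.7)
My plan is to decompose the integral $\cA(R;a,b)=\int_{B(0,R)}\rho^a w^b\,dg_w$ according to the compactification $\cW_{\epsilon,k}$. Partition the space into a compact core and three asymptotic regions: $\cU_1$ (a neighborhood of $\cH_2\setminus\cH_1$), $\cU_2$ (a neighborhood of $\cH_1\setminus\cH_2$), and $\cU_3$ (a neighborhood of $\cH_1\cap\cH_2$). The compact core will give the $O(1)$ term in the statement.

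On $\cU_1$, since $g_{\Qb}$ reduces to a genuine $b$-metric (the extra conditions in Definition~\ref{qb.1} being vacuous away from $\cH_1$) and $\chi\asymp x_2^{c_1}$ with $c_1:=k(n-1)/(n(k-1))$, the change of variable $R_I:=x_2^{-c_1}$ converts $g_w=g_{\Qb}/\chi^2$ into an asymptotically conical metric of the form $c_1^{-2}\,dR_I^2+R_I^2 g_\perp$. Here $\rho\asymp R_I$ and $w\asymp 1$, so the contribution is $\int^R R_I^{a+2n-1}\,dR_I\asymp R^{a+2n}$ for $a\ne -2n$. On $\cU_2$ the $V$-fibre direction of $\phi_1:\cH_1\to S_1$ is bounded, so the model \eqref{qb.13d} reduces to an essentially one-dimensional radial problem in $r\asymp\rho$: with $w\asymp r^{-\gamma}$ for $\gamma:=1/c_1=n(k-1)/(k(n-1))$ and $dg_w$ carrying the warping factor $r^{(2n-k)/k}\,dr$ times a bounded factor, the contribution is $\int^R r^{a-\gamma b+(2n-k)/k}\,dr\asymp R^{2n/k+a-\gamma b}$, which becomes $\log R$ precisely in the critical case $2n/k+a-\gamma b=0$.

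The delicate piece is $\cU_3$. In the coordinates $(x_1,x_2,\theta,y)$ one has $\rho=(x_1x_2)^{-c_1}$, $w=x_1$, and $r_V=1/x_2$; the ball constraints read $x_2\ge R^{-1}$ together with $x_1x_2\ge R^{-1/c_1}$. After the substitution $u=-\log x_1$, $v=-\log x_2$, the integrand $\rho^a w^b\,dg_w$ takes the clean form $c_1\,e^{\beta_u u+\beta_v v}\,du\,dv\,d\theta\,dy$, where a direct computation starting from \eqref{qb.13c} gives $\beta_u/c_1=a+2n/k-\gamma b$ and $\beta_v/c_1=a+2n$, and the domain becomes the triangle $\{u,v\ge\log 2,\ u+v\le c_1^{-1}\log R\}$ (the constraint $v\le\log R$ being automatic since $c_1>1$). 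A case analysis on the signs of $\beta_u$, $\beta_v$, and $\beta_u-\beta_v=-(b+2n-2)$ evaluates this exponential integral as $\asymp R^{\max(\beta_u,\beta_v,0)/c_1}$, which always reproduces one of the two exponents $a+2n$ or $2n/k+a-\gamma b$ already produced by $\cU_1$ and $\cU_2$; in the critical case $\beta_u=0$ one picks up the $\log R$ from the now-linear $u$-integral.

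The main technical hurdle is precisely this step on $\cU_3$: a naive product-box integration $\{r\le R,\ r_V\le R\}$ would produce a spurious exponent $a+2n/k+2n-2+(1-\gamma)b$, strictly larger than both $a+2n$ and $2n/k+a-\gamma b$ in typical cases, but the constraint $x_1\le 1$ forces $r\ge r_V^{c_1}$, and the resulting triangular (rather than rectangular) integration domain in $(u,v)$ precisely cuts off that mode. Summing the contributions of the core, $\cU_1$, $\cU_2$, and $\cU_3$ yields $\cA(R;a,b)\asymp 1+R^{a+2n}+R^{2n/k+a-\gamma b}$, with the stated logarithmic correction in the degenerate case.
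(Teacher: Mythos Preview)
Your argument is correct, but it takes a more granular route than the paper. The paper splits only into two regions, $V_2=\{w\ge 1-c\}$ (where the metric is asymptotically conical and $w\asymp 1$, yielding $1+R^{a+2n}$ directly) and $V_1=\{w<1-c\}$. On $V_1$ the paper exploits the warped-product model $g\approx d\rho^2+\rho^2\,d\theta^2+\rho^{2(n-k)/(k(n-1))}g_{V_{\epsilon,k}}$ to write $d\mu_{a,b}$ as a product measure and integrates the fibre first: since $g_{V_{\epsilon,k}}$ is an $\AC$-metric on a $(2n-2)$-manifold, one has $\int_{\{\rho_2<(1-c)\rho^{\gamma}\}}\rho_2^b\,dg_{V_{\epsilon,k}}\asymp 1+\rho^{\gamma(b+2n-2)}$, and substituting this into the remaining one-dimensional $\rho$-integral produces both exponents $a+2n$ and $\tfrac{2n}{k}+a-\gamma b$ (and the logarithm in the critical case) simultaneously. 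In effect the paper's inner fibre integral already interpolates between your $\cU_2$ regime (the ``$+1$'' term, $\rho_2$ bounded) and your $\cU_3$ regime (the ``$\rho^{\gamma(b+2n-2)}$'' term, $\rho_2$ large), so the corner never needs to be treated separately. Your log-coordinate simplex computation on $\cU_3$ is a valid substitute---your identification $\beta_u/c_1=\tfrac{2n}{k}+a-\gamma b$, $\beta_v/c_1=a+2n$ checks out---and would generalize more mechanically to deeper corners, at the price of a longer case analysis here.
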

\begin{proof}
Consider the two regions
\begin{equation}
V_1:= \left\{ p\in W_{\epsilon,k} \; | \; w(p) < 1-c\right\} \quad \mbox{and} \quad
V_2:= \left\{ p\in W_{\epsilon,k} \; | \; w(p) \ge 1-c\right\}. 
\label{mp.10}\end{equation}
In the region $V_2$, the metric $g$ behaves like an $\AC$-metric.  Moreover, since by definition $w\ge 1-c$ on $V_2$, we compute as in \cite{DM2014} that 
\begin{equation}
  \mu_{a,b}(B(0,R)\cap V_2)\asymp 1 + R^{a+2n} \quad \mbox{for} \quad R>1.
\label{mp.11}\end{equation}
In the region $V_1$, using the coordinates $\zeta, z_{n+1}$,  take $\rho= |z_{n+1}|$, $w=\frac{\langle \zeta\rangle^{\frac{n-k}{n-1}}}{\rho^{\frac{n(k-1)}{k(n-1)}}}$ and suppose that $g$ is of the form
\begin{equation}
  d\rho^2 + \rho^2 d\theta^2 + \rho^{\frac{2(n-k)}{k(n-1)}}g_{V_{\epsilon,k}}.
\label{mp.12}\end{equation}
If we set $\rho_2:= \langle \zeta  \rangle^{\frac{n-k}{n-1}}$, then we see that the volume density of $g$ is given by
\begin{equation}
 dg= \rho^{\frac{2(n-k)}{k}+1} d\rho d\theta dg_{V_{\epsilon,k}} \quad \Longrightarrow \quad d\mu_{a,b}= \rho^aw^b dg= \rho^{\frac{2(n-k)}k+1+a -b\left( \frac{n(k-1)}{k(n-1)} \right)}
 d\rho d\theta \rho_2^b dg_{V_{\epsilon,k}}.
\label{mp.13}\end{equation}
Notice also that $w<1-c$ if and only if $\rho_2<\rho^{\frac{n(k-1)}{k(n-1)}}(1-c)$ and that outside a compact set of $V_{\epsilon,k}$, we have that 
$$
        \rho_2^bdg_{V_{\epsilon,k}} \asymp \rho_2^b \rho_2^{2n-3}d\rho_2 d\kappa,
$$
where $d\kappa$ is a volume density on $\pa\bV_{\epsilon,k}$.  Hence, we compute that
\begin{equation}
\begin{aligned}
 \mu_{a,b}(B(0,R)\cap V_1) & \asymp 1+ \int_1^R  \rho^{\frac{2(n-k)}k+1+a -b\left( \frac{n(k-1)}{k(n-1)} \right)} \left(  \int_{\left\{\rho_2<(1-c)\rho^{\frac{n(k-1)}{k(n-1)}}   \right\}}\rho_2^b dg_{V_{\epsilon,k}}\right)d\rho \\
  &\asymp 1 + \int_1^R \rho^{\frac{2(n-k)}k+1+a -b\left( \frac{n(k-1)}{k(n-1)} \right)} \left( 1+ ((1-c)\rho^{\frac{n(k-1)}{k(n-1)}})^{b+2n-2} \right)d\rho \\
  & \asymp \left\{\begin{array}{ll}1+R^{a+2n}+ R^{\frac{2n}k+a -b\left( \frac{n(k-1)}{k(n-1)} \right)}, & \mbox{if} \; \frac{2n}k+a -b\left( \frac{n(k-1)}{k(n-1)}\right)\ne 0, \\
        1+R^{a+2n}+ \log R, & \mbox{if} \; \frac{2n}k+a -b\left( \frac{n(k-1)}{k(n-1)}\right)= 0,
        \end{array}  \right.
 \end{aligned}
 \label{mp.14}\end{equation}
 from which the result follows.  
\end{proof}
\begin{corollary}
If $a$ and $b$ satisfy 
\begin{equation}
   a+2n> 0 \quad \mbox{and}  \quad b+2n-2> 0, 
\label{mp.15}\end{equation}
then 
$$
       \cA(R;a,b)\asymp R^{a+2n} \quad \mbox{for} \; R>1.
$$
\label{mp.16}\end{corollary}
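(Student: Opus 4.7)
The plan is to deduce the corollary directly from Proposition~\ref{mp.9} by showing that under the hypotheses \eqref{mp.15}, the term $R^{a+2n}$ dominates the other terms in the asymptotic expression for $\cA(R;a,b)$ when $R>1$.

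First, the constant term $1$ is dominated by $R^{a+2n}$ whenever $R>1$, since $a+2n>0$ is assumed. Similarly, in the logarithmic subcase of Proposition~\ref{mp.9}, the term $\log R$ is dominated by $R^{a+2n}$ for $R>1$ since $a+2n>0$. Hence the only nontrivial thing to check is that in the non-logarithmic case,
\begin{equation*}
\frac{2n}{k}+a-b\cdot\frac{n(k-1)}{k(n-1)} \le a+2n,
\end{equation*}
so that the competing power of $R$ is absorbed into $R^{a+2n}$.

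Cancelling $a$, multiplying through by $k(n-1)>0$, and rearranging, this inequality is equivalent to
\begin{equation*}
2(n-1) - b(k-1) \le 2k(n-1),
\end{equation*}
that is, $-b(k-1)\le 2(k-1)(n-1)$. Since $k\ge 2$ by \eqref{ac.6}, we have $k-1>0$, so this reduces to $b\ge -(2n-2)$, which is precisely the hypothesis $b+2n-2>0$ (in fact the hypothesis gives strict inequality, which is more than enough). Combining this with $a+2n>0$ yields the stated asymptotic $\cA(R;a,b)\asymp R^{a+2n}$ for $R>1$.

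There is no serious obstacle here: the corollary is a direct arithmetic consequence of Proposition~\ref{mp.9}, and the only thing to verify is that the two exponent conditions in \eqref{mp.15} are precisely what is needed to ensure that the $R^{a+2n}$ term beats both the constant term and the second power of $R$ appearing in Proposition~\ref{mp.9}.
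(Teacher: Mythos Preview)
Your proof is correct and follows essentially the same approach as the paper: both deduce the result directly from Proposition~\ref{mp.9} by verifying that the hypothesis $b+2n-2>0$ forces the exponent $\frac{2n}{k}+a-b\cdot\frac{n(k-1)}{k(n-1)}$ to be at most $a+2n$, so that $R^{a+2n}$ dominates. The paper just writes this bound slightly more tersely by substituting $-b<2n-2$ directly into the exponent rather than rearranging the inequality algebraically as you do.
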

\begin{proof}
If $-b< 2n-2$, then for $R>1$, 
$$
      R^{\frac{2n}k+a -b\left( \frac{n(k-1)}{k(n-1)} \right)}\le R^{\frac{2n}k+a + \frac{2n(k-1)}{k}}= R^{2n+a} \quad \mbox{for}  \quad  \frac{2n}k+a -b\left( \frac{n(k-1)}{k(n-1)}\right)\ne 0,
$$
whereas  if $ \frac{2n}k+a -b\left( \frac{n(k-1)}{k(n-1)}\right)=0$, we have instead that
$$
      \log R< CR^{\frac{2n}k+a + \frac{2n(k-1)}{k}}= CR^{2n+a}
$$
for some constant $C>0$.  The result follows by combining these observations with Proposition~\ref{mp.9}.
\end{proof}
\begin{remark}
Notice that Proposition~\ref{mp.9} is slightly different than the corresponding result for $\QAC$-metrics, but that Corollary~\ref{mp.16} is the exact analogue of the corresponding statement for $\QAC$-metrics, \cf \cite[Proposition~4.9 and (4.11)]{DM2014}.
\end{remark}

We  can also estimate the volume of remote balls.  Since $\rho\asymp d(0,\cdot)$ outside a compact set, we can use the function $\rho$ instead of $d(0,\cdot)$ to define remote balls.  That is, we will say that $B(p,r)$ is remote if  $r\in (0, c\rho(p))$.  
\begin{proposition}
Suppose that we have chosen the remote parameter $c\in(0,1)$ so that in fact $c\in(0,\frac13)$, that $a\ne -2n$ and that  $b\ne -2n+2$.  Fix $p\in W_{\epsilon,k}$.  If $w(p)\ge 1-3c$, then for $r\in (0,c\rho(p))$,
$$
     \cR(p,r;a,b) \asymp \rho^a r^{2n}.
$$
If instead $w(p)< 1-3c$, then
$$
    \cR(p,r;a,b)\asymp \left\{  \begin{array}{ll} w(p)^b \rho(p)^a r^{2n}, & \mbox{if} \; r<cw(p)\rho(p), \\
                                                                      \rho(p)^{a-b} r^{2n+b} & \mbox{if} \; r\ge cw(p) \rho(p).  \end{array} \right.
$$ 
\label{mp.17}\end{proposition}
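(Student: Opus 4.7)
The approach follows the strategy of \cite{DM2014} for the analogous $\QAC$-volume estimates, adapted to the warped setting by splitting according to which geometric regime $p$ lies in. I will split into the two regions $V_1, V_2$ of \eqref{mp.10}, which separate the AC-like regime ($w$ comparable to $1$) from the warped regime ($w$ small). The hypothesis $c \in (0, 1/3)$ guarantees $1-3c>0$ and ensures that a remote ball centred in one regime does not extend significantly into the other.

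In the AC regime $w(p) \geq 1-3c$, the remote hypothesis $r < c\rho(p)$ keeps all scale factors essentially frozen on $B(p,r)$: one verifies that $\rho \asymp \rho(p)$ and $w \asymp 1$ throughout the ball. This reduces the claim to the standard statement that $\vol_g(B(p,r)) \asymp r^{2n}$ for an AC-like metric, immediately giving $\cR(p,r;a,b) \asymp \rho(p)^a r^{2n}$.

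In the warped regime $w(p) < 1-3c$, I pass to the local model coordinates $(\rho, \theta, \zeta)$ of \eqref{qb.13d} and, using $w = \rho_2/\rho^{\frac{n(k-1)}{k(n-1)}}$ with $\rho_2 := \langle \zeta\rangle^{\frac{n-k}{n-1}}$, together with $dg_V \asymp \rho_2^{2n-3} d\rho_2 d\kappa$ outside a compact set of $V_{\epsilon,k}$, I write $d\mu_{a,b}$ in local coordinates. The key geometric input is that, because $r < c\rho(p)$, the ball $B(p,r)$ is quasi-isometric to the product of a $\rho$-interval of length $\sim r$, a $\theta$-interval of arc length $\sim r/\rho(p)$, and a ball $B_V(p_V, s)$ in $V_{\epsilon,k}$ of radius $s := r\rho(p)^{-\frac{n-k}{k(n-1)}}$, and that the dichotomy of the statement, $r < cw(p)\rho(p)$ versus $r \geq cw(p)\rho(p)$, translates exactly to whether this fiber ball is remote in $V_{\epsilon,k}$ ($s < c\rho_2(p)$) or extends to the tip ($s \geq c\rho_2(p)$). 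In the remote-fiber subcase, $w$ stays comparable to $w(p)$ on $B(p,r)$, the measure $d\mu_{a,b}$ is essentially constant, and the calculation reduces as in the AC regime to give $\cR \asymp \rho(p)^a w(p)^b r^{2n}$. In the tip subcase, $B_V(p_V, s) \supset B_V(0, s/2)$, so one bounds the fiber integral $\int \rho_2^{b+2n-3} d\rho_2$ from both sides by $s^{b+2n-2}$ (using $b + 2n - 2 > 0$); collecting the resulting powers of $\rho(p)$ and $r$, the identity $n(k-1) + (n-k) = k(n-1)$ produces a clean cancellation that yields the stated $\rho(p)^{a-b} r^{2n+b}$.

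The main obstacle will be the geometric step: carefully verifying that the remote warped ball $B(p,r)$ is indeed quasi-isometric to the stated triple product with constants uniform in $p$ and $r$, and in the tip subcase that $B(p,r)$ really does cover a fixed fraction of the fiber up to the tip, with comparable measure. This frozen-scale argument draws both on $r < c\rho(p)$ and on the bounded-geometry statement of Proposition~\ref{gb.12}. Once the geometric reduction is in hand, the rest is careful bookkeeping of exponents, in which the cancellation coming from the algebraic identity $n(k-1) + (n-k) = k(n-1)$ is the essential step.
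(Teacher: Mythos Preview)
Your approach is essentially the one taken in the paper: split according to whether $w(p)\ge 1-3c$ (AC regime, where the result reduces to \cite[Proposition~4.13]{DM2014}) or $w(p)<1-3c$ (warped regime), and in the latter case compare $B(p,r)$ to a product $B_1(p_1,r)\times B_2(p_2,r/\rho(p)^{\frac{n-k}{k(n-1)}})$ in $\bbC\times V_{\epsilon,k}$, then invoke the AC remote/non-remote volume estimates on the fibre factor according to whether $r<cw(p)\rho(p)$ or not. The paper carries this out exactly as you describe, including the observation that $r\ge c\rho(p)w(p)$ is equivalent to the fibre radius satisfying $s\ge c\rho_2(p)$.

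Two small points to tighten. First, your claimed inclusion $B_V(p_V,s)\supset B_V(0,s/2)$ in the tip subcase is not valid at the threshold $s=c\rho_2(p)$ with $c<\tfrac13$: the ball need not reach the origin at all. The paper avoids this by quoting the AC non-remote volume estimate \eqref{mp.18} (from \cite[Proposition~4.15]{DM2014}) directly, which gives $\int_{B_V(p_V,s)}\rho_2^b\,dg_V\asymp s^{b+2n-2}$ for all $s\ge c\rho_2(p)$ without any inclusion argument. Second, you invoke $b+2n-2>0$, but only $b\ne -2n+2$ is assumed; the AC estimate \eqref{mp.18} holds under this weaker hypothesis and that is all that is needed.
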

\begin{proof}
First notice that $\rho(z)\asymp \rho(p)$ for $z\in B(p,c\rho(p))$.  Now, if $w(p)\ge 1-3c$, we are in a region where $g$ behaves like an $\AC$-metric, so the result follows as in \cite[Proposition~4.13]{DM2014}.  In fact, for $\AC$-metrics, recall from \cite[Proposition~{4.15}]{DM2014} that, using a simple rescaling argument, we have also the following estimate for the volume of non-remote balls,
\begin{equation}
  \int_{B_{g_{V_{\epsilon,k}}}(p,r)} \rho_2^b dg_{V_{\epsilon,k}}  \asymp r^{b+ 2n-2} \quad \mbox{if} \; r\ge c\rho_2(p),  
\label{mp.18}\end{equation}
where $B_{g_{V_{\epsilon,k}}}(p,r)$ is the geodesic ball of radius $r$ centred at $p\in V_{\epsilon,k}$ with respect to the $\AC$-metric $g_{V_{\epsilon,k}}$.

If instead we have that $w(p)<1-3c$, then notice that $B(p,r)\subset V_1$ with $V_1$ as in \eqref{mp.10}.  Indeed, since $w=\frac{r_2}{\rho}$ with $r_2:= \rho^{\frac{n-k}{k(n-1)}}\langle \zeta\rangle^{\frac{n-k}{n-1}}$, we see that if $q\in B(p, r)$, then
\begin{equation}
  \rho(q)\ge \rho(p) -d(p,q)\ge \rho(p) -r\ge \rho(p) -c\rho(p) = (1-c)\rho(p),
\label{mp.18c}\end{equation}
and
$$
r_2(q)\le r_2(p) + d(p,q)\le r_2(p)+ c\rho(p),
$$
so that 
\begin{equation}
 w(q) \le \frac{r_2(p)+ c\rho(p)}{\rho(q)}< \frac{(1-3c)\rho(p)+ c\rho(p)}{\rho(q)}\le \frac{1-2c}{1-c}= 1 -\frac{c}{1-c}< 1-c.
\label{mp.18b}\end{equation}
We can thus assume that $g$ is given by \eqref{mp.12} in the decomposition $\bbC\times V_{\epsilon,k}$.  If $p$ corresponds to the point $(p_1,p_2)\in \bbC\times V_{\epsilon,k}$ and if $B_1(p_1, r)$ denotes the geodesic ball in $\bbC$ with respect to the Euclidean metric $d\rho^2+ \rho^2 d\theta^2$ and $B_2(p_2, r)$ denotes the geodesic ball in $V_{\epsilon,k}$ with respect to the metric $g_{V_{\epsilon,k}}$, then recalling that $\rho\asymp \rho(p)$ on $B(p,r)$, notice that the weighted volume of $B(p,r)$ is comparable to the one of the product of balls
$$
        B_1(p_1,r) \times B_2\left(p_2, \frac{r}{\rho(p)^{\frac{n-k}{k(n-1)}}}\right).
$$
Now, we have that
\begin{equation}
\mu_{a,b}\left( B_1(p_1,r) \times B_2\left(p_2, \frac{r}{\rho(p)^{\frac{n-k}{k(n-1)}}}\right)\right) = \int_{B_1(p,r)} \left( \int_{B_2\left(p_2, \frac{r}{\rho(p)^{\frac{n-k}{k(n-1)}}}\right)} \rho_2^b dg_{V_{\epsilon,k}} \right) \rho^{\frac{2(n-k)}k +1+a -b\left( \frac{n(k-1)}{k(n-1)} \right)}d\rho d\theta.
\label{mp.19}\end{equation}
Notice that $r\ge c\rho(p)w(p)$ if and only if $\frac{r}{\rho(p)^{\frac{n-k}{k(n-1)}}}\ge c\rho_2(p)$,  so using the fact that $g_{V_{\epsilon,k}}$ is an $\AC$-metric, we see that
$$
\int_{B_2\left(p_2, \frac{r}{\rho(p)^{\frac{n-k}{k(n-1)}}}\right)} \rho_2^b dg_{V_{\epsilon,k}} \asymp \left\{  \begin{array}{ll} \rho_2(p)^b \left( \frac{r}{\rho(p)^{\frac{n-k}{k(n-1)}}} \right)^{2n-2}, & \mbox{if} \; r<cw(p)\rho(p), \\
\left( \frac{r}{\rho(p)^{\frac{n-k}{k(n-1)}}} \right)^{2n-2+b},& \mbox{if} \; r\ge cw(p) \rho(p).  \end{array} \right.
$$
Substituting this in \eqref{mp.19} and using the fact that the Euclidean metric on $\bbC$ is an $\AC$-metric, we find that
$$
\mu_{a,b}\left( B_1(p_1,r) \times B_2\left(p_2, \frac{r}{\rho(p)^{\frac{n-k}{k(n-1)}}}\right)\right)\asymp \left(\frac{\rho_2(p)}{\rho(p)^{\frac{n(k-1)}{k(n-1)}}}\right)^{b}\rho(p)^a r^{2n}= w(p)^b \rho(p)^a r^{2n}
$$
if $r< cw(p) \rho(p)$, and 
$$
\mu_{a,b}\left( B_1(p_1,r) \times B_2\left(p_2, \frac{r}{\rho(p)^{\frac{n-k}{k(n-1)}}}\right)\right)\asymp \rho(p)^{a-b}r^{2n+b} 
$$
if $r\ge cw(p) \rho(p)$, from which the result follows.  
 
\end{proof}

\begin{corollary}
For $a$ and $b$ as in Proposition~\ref{mp.17}, remote balls satisfy $(VD)_\mu$.  
\label{mp.19b}\end{corollary}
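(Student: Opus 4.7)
The plan is to verify the doubling condition directly from the explicit estimates of Proposition~\ref{mp.17}, by a case analysis on the location of $p$ and the size of $r$. Fix a remote ball $B(p, r)$, so $r < c\rho(p)$, and first consider the subcase in which $2r < c\rho(p)$, so that $B(p, 2r)$ is also remote. If $w(p) \ge 1-3c$, applying Proposition~\ref{mp.17} at both radii gives $\cR(p, r; a, b) \asymp \rho(p)^a r^{2n}$ and $\cR(p, 2r; a, b) \asymp \rho(p)^a (2r)^{2n}$, so the doubling ratio is $2^{2n}$. If instead $w(p) < 1-3c$, I would set $s := c w(p) \rho(p)$ and split further according to whether $2r \le s$ (both balls fall under the first subcase of Proposition~\ref{mp.17}, ratio $2^{2n}$), $r \ge s$ (both fall under the second subcase, ratio $2^{2n+b}$), or $r < s \le 2r$ (transition).

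The transition case is the step I expect to require the most care: here $r \asymp s$, and substituting $r \asymp cw(p)\rho(p)$ into the two formulas of Proposition~\ref{mp.17} yields
\[
\cR(p, r; a, b) \asymp w(p)^b \rho(p)^a r^{2n} \asymp c^{2n} w(p)^{b+2n} \rho(p)^{a+2n}
\]
and
\[
\cR(p, 2r; a, b) \asymp \rho(p)^{a-b} (2r)^{2n+b} \asymp (2c)^{2n+b} w(p)^{b+2n} \rho(p)^{a+2n},
\]
so that the two estimates agree up to the uniform factor $2^{2n+b} c^b$, giving a bounded doubling ratio across this transition.

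It remains to handle the subcase $c\rho(p)/2 \le r < c\rho(p)$, where $B(p, 2r)$ itself need not be remote. Here $r \asymp \rho(p)$, and the preceding analysis (combined with the observation that the condition $r \ge c\rho(p)/2$ forces $w(p) \ge 1/2$ whenever $r < s$) shows that $\cR(p, r; a, b) \asymp \rho(p)^{a+2n}$, up to a bounded $w$-dependent prefactor. On the other hand, $B(p, 2r)$ is contained in the annular region $\{(1-2c)\rho(p) \le \rho \le (1+2c)\rho(p)\}$ and can be covered by a uniformly bounded number of remote balls of radius $c\rho(p)/4$ centered in this annulus; applying Proposition~\ref{mp.17} to each cover element gives $\cV(p, 2r; a, b) \lesssim \rho(p)^{a+2n}$, producing again a bounded ratio. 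Collecting all cases yields a uniform doubling constant $C_D$ depending only on $a$, $b$, $c$, $n$, and $k$.
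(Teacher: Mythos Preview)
Your proof is correct and follows exactly the approach the paper has in mind: the corollary is stated there with no proof, as an immediate consequence of the explicit power-law formulas in Proposition~\ref{mp.17}. Your Cases~1 and~2 (including the transition analysis at $r\asymp s=cw(p)\rho(p)$, where the two formulas match up to the constant $c^b$) are precisely the content of that implicit argument.

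Two minor remarks on Case~3. First, the covering claim (``$B(p,2r)$ can be covered by a uniformly bounded number of remote balls of radius $c\rho(p)/4$'') is correct but not self-evident: the bound on the covering number relies on knowing that the \emph{unweighted} volume of remote balls satisfies $\cR(p,r;0,0)\asymp r^{2n}$, which is itself the $a=b=0$ case of Proposition~\ref{mp.17}. Once you have that, a standard disjoint-packing argument gives the bound; you should at least point to this. Second, Case~3 can be sidestepped entirely: since Proposition~\ref{mp.17} is valid for any remote parameter in $(0,\tfrac13)$, one may simply take $c<\tfrac16$. Then $2r<2c\rho(p)$ with $2c<\tfrac13$, so Proposition~\ref{mp.17} (with parameter $2c$) applies directly to $B(p,2r)$; the thresholds $1-3c$ versus $1-6c$ differ, but in the overlap $1-6c\le w(p)<1-3c$ the quantity $w(p)$ is uniformly bounded away from $0$ and $1$, so the two sets of formulas are compatible. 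This is the cleaner route and matches how the paper later freely shrinks $c$ (e.g.\ to $c<\tfrac14$ in Theorem~\ref{mp.24}).
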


\begin{corollary}
If $a$ and $b$ are as in Proposition~\ref{mp.17}, then for any $p\in W_{\epsilon,k}$ and for $r=c\rho(p)$, we have that 
$$
   \cR(p,r;a,b)\asymp r^{2n+a}\asymp \rho(p)^{2n+a}.
$$
\label{mp.20}\end{corollary}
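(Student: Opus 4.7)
The plan is to deduce this directly from Proposition~\ref{mp.17} by a case analysis on the value of $w(p)$, noting that the choice $r=c\rho(p)$ sits precisely at the boundary between ``anchored'' and ``remote'' regimes, so the three formulas in Proposition~\ref{mp.17} should all collapse to the same answer $\rho(p)^{2n+a}$. The trivial observation $r^{2n+a} = c^{2n+a}\rho(p)^{2n+a}\asymp \rho(p)^{2n+a}$ handles the second $\asymp$, so the entire problem reduces to verifying the first.

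\medskip

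First I would handle the case $w(p)\ge 1-3c$. Since $r=c\rho(p)$ satisfies $r<\rho(p)$, the ball $B(p,r)$ is remote and Proposition~\ref{mp.17} gives $\cR(p,r;a,b)\asymp \rho(p)^a r^{2n}$; substituting $r=c\rho(p)$ yields $\asymp \rho(p)^{2n+a}$, as required. Next I would treat the case $w(p)<1-3c$. Here I need to determine which of the two subcases of Proposition~\ref{mp.17} applies. Since $w(p)<1-3c<1$, we have $cw(p)\rho(p)<c\rho(p)=r$, so we are in the regime $r\ge cw(p)\rho(p)$ and therefore
\begin{equation*}
\cR(p,r;a,b)\asymp \rho(p)^{a-b}\,r^{2n+b}.
\end{equation*}
Plugging in $r=c\rho(p)$ cancels the $b$-dependence and gives $\asymp \rho(p)^{a-b}\rho(p)^{2n+b}=\rho(p)^{2n+a}$, matching the previous case.

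\medskip

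There is no real obstacle: the corollary is a bookkeeping consequence of Proposition~\ref{mp.17}. The only minor point worth checking explicitly is the strict inequality $cw(p)\rho(p)<r$ in the second case, which uses the factor $1-3c>0$ (guaranteed by the restriction $c\in(0,\tfrac13)$ from Proposition~\ref{mp.17}) rather than just $w(p)<1$, so that we land cleanly in the second subcase with uniform constants independent of $p$. Combining the two cases and absorbing the $c$-powers into the implicit constants in $\asymp$ completes the proof.
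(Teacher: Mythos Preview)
Your proof is correct and follows essentially the same approach as the paper: a two-case split on $w(p)\ge 1-3c$ versus $w(p)<1-3c$, with the second case handled by the inequality $r=c\rho(p)>cw(p)\rho(p)$ placing us in the second subcase of Proposition~\ref{mp.17}. Your write-up is simply more explicit about the substitution and the cancellation of the $b$-dependence.
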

\begin{proof}
If $w(p)\ge 1-3c$, this is a direct consequence of the previous proposition.  If instead $w(p) < 1-3c$, then notice that 
$$
     r= c\rho(p)\ge c\rho(p) (1-3c)> c\rho(p) w(p),
$$ 
so the result is again a consequence of Proposition~\ref{mp.17}.
\end{proof}
We can also use this result to estimate the volume of non-remote balls.
\begin{corollary}
Suppose that $a$ and $b$ satisfy \eqref{mp.15}.  Then for any $p\in W_{\epsilon,k}$ and any $r\ge c\rho(p)$, we have that
$$
         \cV(p,r;a,b)\asymp r^{2n+a}.
$$
\label{mp.20b}\end{corollary}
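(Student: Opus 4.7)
The plan is to sandwich $\cV(p,r;a,b)$ between volumes of anchored and remote balls, each of which is already controlled by Corollaries~\ref{mp.16} and \ref{mp.20}. A case split based on the size of $\rho(p)$ relative to $r$ will select which comparison ball to use for the lower bound.

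For the upper bound, the hypothesis $r\ge c\rho(p)$ gives $\rho(p)\le r/c$, so the triangle inequality yields $B(p,r)\subset B(0,\rho(p)+r)\subset B(0,(1+c^{-1})r)$.  Provided $(1+c^{-1})r>1$, Corollary~\ref{mp.16} then furnishes
\begin{equation*}
  \cV(p,r;a,b)\le \cA\bigl((1+c^{-1})r;a,b\bigr)\asymp r^{2n+a}.
\end{equation*}

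For the lower bound, I would split into two cases according to whether $\rho(p)\le r/2$ or $\rho(p)>r/2$.  In the first case, if $d(0,q)<r/2$ then $d(p,q)\le\rho(p)+d(0,q)<r$, so $B(0,r/2)\subset B(p,r)$, and Corollary~\ref{mp.16} gives
\begin{equation*}
  \cV(p,r;a,b)\ge \cA(r/2;a,b)\asymp r^{2n+a}.
\end{equation*}
In the second case, combining $\rho(p)>r/2$ with $\rho(p)\le r/c$ forces $\rho(p)\asymp r$, and since $c\rho(p)\le r$ the remote ball $B(p,c\rho(p))$ is contained in $B(p,r)$; Corollary~\ref{mp.20} then delivers
\begin{equation*}
  \cV(p,r;a,b)\ge \cR(p,c\rho(p);a,b)\asymp \rho(p)^{2n+a}\asymp r^{2n+a}.
\end{equation*}

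The one mildly delicate point is the very small $r$ regime where the hypothesis $R>1$ of Corollary~\ref{mp.16} fails.  This is not a real obstacle: the weight $\rho$ is a smooth positive function on $W_{\epsilon,k}$ that tends to $+\infty$ at infinity, so it is bounded below by a positive constant $\rho_{\min}$; the assumption $r\ge c\rho(p)$ then forces $r\ge c\rho_{\min}>0$, and in any bounded range of $r$ the bounded geometry of a $\Qb$-metric (Lemma~\ref{qb.6}), together with $\rho,w$ being bounded above and below on the relevant compact region, yields $\cV(p,r;a,b)\asymp r^{2n}\asymp r^{2n+a}$ directly.  Apart from this book-keeping, the corollary is a straightforward synthesis of Corollaries~\ref{mp.16} and \ref{mp.20}, so no serious obstacle is expected.
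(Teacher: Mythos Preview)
Your proposal is correct and follows essentially the same route as the paper: sandwich $\cV(p,r;a,b)$ between an anchored ball (controlled by Corollary~\ref{mp.16}) and a remote ball of radius $c\rho(p)$ (controlled by Corollary~\ref{mp.20}), with a case split according to whether $r$ is large or comparable to $\rho(p)$. The paper's split is at $r\ge 3\rho(p)$ versus $c\rho(p)\le r<3\rho(p)$, yours at $\rho(p)\le r/2$ versus $\rho(p)>r/2$, but these are cosmetically different partitions of the same regime. One small point: your inclusions $B(p,r)\subset B(0,\rho(p)+r)$ and $B(0,r/2)\subset B(p,r)$ tacitly use $d(0,p)\le \rho(p)$, which the paper arranges explicitly by normalizing $\rho$ so that $\min\rho\ge 1$ and $\rho(p)\ge d(0,p)$; you should state this (or the comparability $\rho\asymp d(0,\cdot)$) rather than appeal to the triangle inequality alone. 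The paper also uses this normalization to sidestep the $R>1$ hypothesis of Corollary~\ref{mp.16}, whereas you handle the bounded-$r$ regime by a separate compactness/bounded-geometry argument; both are fine.
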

\begin{proof}
Without loss of generality, we can assume that $\min \rho\ge 1$ and that $\rho(p)\ge d(0,p)$ for all $p\in W_{\epsilon,k}$.  Given $p\in W_{\epsilon,k}$, suppose first that $r\ge 3\rho(p)$.  In this case,
$$
    B(0,\frac{2r}3)\subset B(p,r) \subset B(0,\frac{4r}3), 
$$
so that 
$$
   \cA(\frac{2r}3;a,b) \le \cV(p,r;a,b)\le \cA(\frac{4r}3;a,b)
$$
and the result follows from Corollary~\ref{mp.16}.  If instead $c\rho(p)\le r< 3\rho(p)$, then
$$
     B(p,c\rho(p))\subset B(p,r)\subset B(0,4\rho(p)),
$$
so that 
$$
   \cR(p,c\rho(p);a,b)\le \cV(p,r;a,b)\le \cA(4\rho(p);a,b)
$$
and we deduce from Corollary~\ref{mp.16} and Corollary~\ref{mp.20} that 
$$
     \cV(p,r;a,b)\asymp \rho(p)^{2n+a}\asymp r^{2n+a}.
$$
\end{proof}

\begin{corollary}
Suppose that $a$ and $b$ satisfy condition \eqref{mp.15}.  Then there exists a constant $C>0$ such that 
\begin{equation}
    \cA(\rho(p);a,b)\le C \cR(p,c\rho(p);a,b)      
\label{mp.21}\end{equation}
for any $p\in W_{\epsilon,k}$.
\label{mp.22}\end{corollary}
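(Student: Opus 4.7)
The plan is to deduce the inequality by combining directly the two scaling estimates established just above, namely Corollary~\ref{mp.16} (which gives the volume of anchored balls) and Corollary~\ref{mp.20} (which gives the volume of maximal remote balls). Both asymptotics are of the form $\rho^{2n+a}$, so the comparison should be essentially immediate.

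First I would reduce to the case $\rho(p)\ge 1$. As in the proof of Corollary~\ref{mp.20b}, we may assume without loss of generality that $\min\rho\ge 1$ on $W_{\epsilon,k}$; indeed, the estimates for $\cA$ in Proposition~\ref{mp.9} and for $\cR$ in Proposition~\ref{mp.17} are only given for $R>1$ and outside a compact set, and on a fixed compact region both quantities are bounded above and below by positive constants, so the inequality trivially holds there. Once we are in the region $\rho(p)\ge 1$, the hypothesis \eqref{mp.15} gives in particular $a\ne -2n$ and $b\ne -2n+2$, which is what Proposition~\ref{mp.17} requires, so Corollary~\ref{mp.20} applies and yields
\begin{equation*}
\cR(p,c\rho(p);a,b)\asymp \rho(p)^{2n+a}.
\end{equation*}
Similarly, Corollary~\ref{mp.16} applied with $R=\rho(p)\ge 1$ gives
\begin{equation*}
\cA(\rho(p);a,b)\asymp \rho(p)^{2n+a}.
\end{equation*}
Dividing one asymptotic by the other produces a uniform constant $C>0$ such that $\cA(\rho(p);a,b)\le C\,\cR(p,c\rho(p);a,b)$, as required.

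I do not expect any real obstacle here, since the work has already been done in Proposition~\ref{mp.9} and Proposition~\ref{mp.17}. The only mildly delicate point is to check that the constant implicit in $\asymp$ in Corollary~\ref{mp.20} is uniform in $p\in W_{\epsilon,k}$ once $\rho(p)$ is bounded away from $0$; this follows because Proposition~\ref{mp.17} separates the cases $w(p)\ge 1-3c$ and $w(p)<1-3c$ and in each case the constants depend only on the geometry of the underlying $\AC$-factors together with the remote parameter $c$, not on $p$.
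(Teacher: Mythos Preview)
Your proposal is correct and follows essentially the same approach as the paper, which simply states that it suffices to combine Corollary~\ref{mp.16} with Corollary~\ref{mp.20}. Your additional remarks about the reduction to $\rho(p)\ge 1$ and the uniformity of the constants in Proposition~\ref{mp.17} are reasonable elaborations of what the paper leaves implicit.
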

\begin{proof}
It suffices to combine Corollary~\ref{mp.16} with Corollary~\ref{mp.20}.
\end{proof}

\begin{corollary}
Suppose that $a$ and $b$ satisfy condition \eqref{mp.15}.  If the complete weighted Riemannian manifold $(W_{\epsilon,k},g,\mu_{a,b})$ satisfies $(VD)_{\mu}$ and $(PI)_{\mu,\delta}$ with parameter $\delta\in (0,1]$  for all remote balls, then $(VD)_{\mu}$ and $(PI)_{\mu,\delta}$ hold for all balls of $(W_{\epsilon,k},g,\mu_{a,b})$.
\label{mp.23}\end{corollary}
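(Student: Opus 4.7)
The plan is to extend the two properties $(VD)_\mu$ and $(PI)_{\mu,\delta}$ from remote balls to all balls in two separate steps. The volume doubling extension will be essentially immediate from the volume estimates already compiled in this section, while the extension of the Poincar\'e inequality will require a Jerison-type chaining argument.

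First, I would handle volume doubling. For remote balls, $(VD)_\mu$ holds by Corollary~\ref{mp.19b}. For any ball $B(p, r)$ with $r \geq c\rho(p)$, Corollary~\ref{mp.20b} gives $\cV(p, r; a, b) \asymp r^{2n+a}$, from which $\cV(p, 2r; a, b) \leq C\, \cV(p, r; a, b)$ follows directly with a constant depending only on $a$ and $n$. The borderline regime $r \asymp c\rho(p)$ is handled by Corollary~\ref{mp.20}, which ensures that the remote and non-remote volume estimates agree up to constants at the transition, so one obtains a uniform doubling constant across all scales. The slightly delicate case $r < c\rho(p) \leq 2r$ is then handled by combining these two estimates directly.

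Second, for the Poincar\'e inequality, the plan is to reduce the case of a non-remote ball to the remote case via a chaining argument in the spirit of Jerison and Saloff-Coste. Given $B(p, r)$ with $r \geq c\rho(p)$ and $f \in W^{1,2}_{\mathrm{loc}}(W_{\epsilon,k})$, I would cover $B(p, r)$ by a Whitney-type family of remote balls $B(p_i, c\rho(p_i)/2)$, apply the remote Poincar\'e inequality on each, and then combine the resulting local estimates by joining any two points of $B(p, r)$ via a chain of overlapping remote balls with geometrically increasing radii up to $\asymp r$. The total number of balls in each chain is controlled by the volume doubling established in the first step together with Corollary~\ref{mp.22}, and Cauchy-Schwarz then gives the global $L^2$ estimate with the parameter $\delta$ preserved, provided each enlarged ball in the chain remains in $B(p, \delta^{-1} r)$.

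The main obstacle, I expect, is executing this chaining argument cleanly near the singular stratum $\cH_1$, where the $\rho$-direction and the transverse fibrewise directions scale very differently, so that one must carefully balance the radii of the Whitney balls against the warped geometry to avoid crossing the scale $\rho(p)w(p)$ in an uncontrolled way. The hypothesis \eqref{mp.15} will be crucial here: it guarantees the clean scaling $\cV(p, r; a, b) \asymp r^{2n+a}$ on non-remote balls, which makes the combinatorial bookkeeping in the chain argument effective and ultimately yields $(PI)_{\mu,\delta}$ on arbitrary balls with the same parameter $\delta$ as in the remote case.
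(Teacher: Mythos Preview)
Your approach is correct in outline but takes a substantially longer route than the paper. The paper simply invokes \cite[Theorem~5.2]{GS2005}: once one has $(VD)_\mu$ and $(PI)_{\mu,\delta}$ on remote balls, the anchored/remote volume comparison \eqref{mp.21} of Corollary~\ref{mp.22}, and the property of relatively connected annuli (RCA) with respect to the basepoint $0$, then $(VD)_\mu$ and $(PI)_{\mu,\delta}$ hold on all balls. Since RCA is obvious for the warped $\QAC$-geometry on $W_{\epsilon,k}$, the proof is two lines.

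What you are proposing is essentially to reprove the content of \cite[Theorem~5.2]{GS2005} by hand. Your treatment of $(VD)_\mu$ via Corollaries~\ref{mp.19b} and \ref{mp.20b} is fine and is in fact close to what underlies the abstract result. For $(PI)_{\mu,\delta}$, the Whitney--Jerison chaining you describe is exactly the mechanism behind the Grigor'yan--Saloff-Coste theorem, but you omit the one geometric hypothesis that makes the chaining go through: the RCA property. It is RCA, not any delicate analysis near $\cH_1$, that guarantees the chains of remote balls can be built inside a controlled annulus; once you have it, the argument is purely metric-measure-theoretic and insensitive to the warped structure. So your anticipated ``main obstacle'' near the singular stratum is a red herring at this stage, while the actual missing ingredient (RCA) is easy here but should be stated. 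Citing \cite{GS2005} directly, as the paper does, buys you all of this for free.
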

\begin{proof}
By \cite[Theorem~5.2]{GS2005} and Corollary~\ref{mp.22}, it suffices to check that $(W_{\epsilon,k},g)$ satisfies the property of relatively connected annuli (RCA) with respect to $0$, that is, that there exists $C_A>1$ such that for all $r>C_A^2$, for all $p,q\in W_{\epsilon,k}$ with $d(0,p)=d(0,q)=r$, there exists a continuous path $\gamma: [0,1]\to W_{\epsilon,k}$ with $\gamma(0)=p$, $\gamma(1)=q$ and with image contained in the annulus $B(0,C_Ar)\setminus B(0, C_A^{-1}r)$.  But clearly the RCA property holds on $(W_{\epsilon,k},g)$, so the result follows.  
\end{proof}
Thus, by Corollary~\ref{mp.23} and Corollary~\ref{mp.19b}, to show that $(VD)_{\mu}$ and $(PI)_{\mu,\delta}$ hold for the complete weighted Riemannian manifold $(W_{\epsilon,k},g,\mu_{a,b})$ with $a$ and $b$ satisfying \eqref{mp.15}, it remains to check that $(PI)_{\mu,\delta}$ holds on remote balls.

\begin{theorem}
A warped $\QAC$-metric $g$ on $W_{\epsilon,k}$ is such that for $a$ and $b$ satisfying \eqref{mp.15}, the properties $(VD)_{\mu}$ and $(PI)_{\mu}$ hold on $(W_{\epsilon,k},g,\mu_{a,b})$.  
\label{mp.24}\end{theorem}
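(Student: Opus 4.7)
The strategy is to reduce the verification of $(VD)_\mu$ and $(PI)_\mu$ on all of $(W_{\epsilon,k},g,\mu_{a,b})$ to the verification of a weighted Poincar\'e inequality on remote balls, and then to handle remote balls by exploiting the fact that, on such balls, a warped $\QAC$-metric is quasi-isometric either to an $\AC$-metric or to a genuine Cartesian product of $\AC$-metrics. Concretely, Corollary~\ref{mp.19b} already supplies $(VD)_\mu$ for remote balls, so by Corollary~\ref{mp.23} it suffices to establish $(PI)_{\mu,\delta}$ on every remote ball for some $\delta\in(0,1]$; the upgrade to $\delta=1$ then follows from the Jerison-type chaining argument of \cite{GS2005} combined with the doubling property and RCA.

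Fix $p\in W_{\epsilon,k}$ and $r\in (0,c\rho(p))$, with $c\in(0,\tfrac13)$ as in Proposition~\ref{mp.17}. The plan is to split according to the value of $w(p)$, mirroring the decomposition \eqref{mp.10}. If $w(p)\ge 1-3c$, then by the computation leading to \eqref{mp.18b} the entire ball $B(p,r)$ lies in a region where $g$ is quasi-isometric to an $\AC$-metric with conical radius $\asymp\rho(p)$, and the weight $\rho^a w^b$ is comparable to the constant $\rho(p)^a$ on $B(p,r)$. The desired Poincar\'e inequality then reduces, after an isometric rescaling, to the standard Poincar\'e inequality on balls in an $\AC$-space, which is established in \cite[Proposition~4.16]{DM2014}.

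If instead $w(p)<1-3c$, then \eqref{mp.18c} and \eqref{mp.18b} guarantee that $B(p,r)\subset V_1$, so $g$ is quasi-isometric on $B(p,r)$ to the warped product \eqref{mp.12}. Because $\rho\asymp\rho(p)$ throughout the ball, the warping factor $\rho^{\frac{2(n-k)}{k(n-1)}}$ is itself quasi-constant on $B(p,r)$, so after rescaling the $V_{\epsilon,k}$-factor by $\rho(p)^{\frac{n-k}{k(n-1)}}$ the ball $B(p,r)$ is quasi-isometric to a Cartesian product ball
\begin{equation*}
  B_1(p_1,r)\times B_2\!\left(p_2,\,\tfrac{r}{\rho(p)^{\frac{n-k}{k(n-1)}}}\right) \subset \bbC\times V_{\epsilon,k}
\end{equation*}
carrying the product measure $\rho(p)^{a-b}\,d\rho\,d\theta\otimes \rho_2^b\,dg_{V_{\epsilon,k}}$. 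Each factor is (after normalisation) an $\AC$-space with a weight of the form $\rho_i^{\text{const}}$; the conditions \eqref{mp.15} translate into the hypotheses that make the weighted $\AC$-Poincar\'e inequality of \cite[Proposition~4.16]{DM2014} applicable on each factor uniformly in the radius. I then invoke the standard tensorisation principle: the weighted Poincar\'e inequality on a product of balls follows from the weighted Poincar\'e inequalities on each factor, with constants controlled by those on the factors.

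The main obstacle is the warped product case, and within it the careful bookkeeping that turns the warped product \eqref{mp.12} into a genuine product on remote balls without losing control of constants: one needs $\rho$ to be quasi-constant on $B(p,r)$ (which forces $r<c\rho(p)$ with $c$ small enough, and is precisely what Proposition~\ref{mp.17} exploits), and one needs the weight $\rho^a w^b = \rho^{a-b\cdot\frac{n(k-1)}{k(n-1)}}\rho_2^b$ to split as a constant times a weight depending only on $V_{\epsilon,k}$. Both are straightforward once $\rho\asymp\rho(p)$ is used, and the remainder of the argument is then a direct translation of \cite[\S4]{DM2014} with $\QAC$ replaced by warped $\QAC$, Proposition~\ref{mp.9} and Corollary~\ref{mp.20} playing the r\^oles of the analogous volume comparisons in \cite{DM2014}.
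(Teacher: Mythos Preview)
Your plan is essentially the same as the paper's: reduce via Corollaries~\ref{mp.19b} and~\ref{mp.23} and Jerison's argument to $(PI)_{\mu,\delta}$ on remote balls, then split according to $w(p)$, handling the $\AC$-like region by rescaling and the warped-product region by tensorising the Poincar\'e inequalities on the two factors. One point to tighten: the metric ball $B(p,r)$ is not literally a product ball, so you cannot simply say it is ``quasi-isometric to a Cartesian product ball'' and invoke tensorisation directly; the paper instead proves Poincar\'e on the product box $Q(r)=B_1\times B_2$ first and then uses the sandwich $B(p,r)\subset Q(r)\subset B(p,3r)$ together with the infimum characterisation $\int_B|f-\bar f_B|^2=\inf_c\int_B|f-c|^2$ to pass back to $B(p,r)$, which is exactly what forces $\delta=\tfrac13$ (and why the paper takes $c\in(0,\tfrac14)$ rather than $\tfrac13$).
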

\begin{proof}
By the previous results, it suffices to check that for some $\delta\in (0,1)$, $(PI)_{\mu,\delta}$ holds on remote balls.  Indeed, in that case, $(VD)_{\mu}$ and $(PI)_{\mu,\delta}$ will hold for all balls by Corollary~\ref{mp.23}, so by the argument of Jerison \cite{Jerison1986}, $(PI)_{\mu}$ will also hold for all balls.  We also choose our remote parameter $c$ to be in the interval $(0,\frac14)$.  Thus, let $B(p,r)$ be a remote ball.  If $w(p)\ge 1-4c$, then we are in a region where $g$ behaves like an $\AC$-metric, so we can apply the rescaling argument of \cite[Proposition~4.20]{DM2014} to conclude that $(PI)_{\mu}$ holds on $B(p,r)$.  If instead $w(p)<1-4c$, then as discussed in \eqref{mp.18b}, we have that $B(p,r)\subset V_1$ and we can suppose that $g$ is of the form \eqref{mp.12}.  Regarding $V_1$ as a subset of $\bbC\times V_{\epsilon,k}$ with $p$ corresponding to the point $(p_1,p_2)\in \bbC\times V_{\epsilon,k}$, notice that $B(p,r)$ is contained in the product of balls
\begin{equation}
     Q(r)= B_1\times B_2:= B_1(p_1,r) \times B_2\left(p_2, \frac{r}{((1-c)\rho(p))^{\frac{n-k}{k(n-1)}}}\right).
\label{mp.25}\end{equation}
Before proving the uniform Poincaré inequality on $B(p,r)$, let us prove it for $Q(r)$.  To do this, write $d\mu_{a,b}= d\mu_1 d\mu_2$ with
$$
    d\mu_1= \rho^{\frac{2(n-k)}k+a-b\left( \frac{n(k-1)}{k(n-1)} \right)} \rho d\rho d\theta \quad \mbox{and} \quad d\mu_2= \rho_2^b dg_{V_{\epsilon,k}}.
$$
Given a function $f$ on $Q(r)$, we define the partial averages 
$$
      \barf_i:= \frac{1}{\mu_i(B_i)} \int_{B_i} f d\mu_i,  \quad i\in \{1,2\}, \quad \barf_Q:= \frac{1}{\mu_{a,b}(Q(r))}\int_{Q(r)} fd\mu_{a,b}.
$$
Notice in particular that $\barf_Q= \overline{(\barf_1)}_2= \overline{(\barf_2)}_1$.  Now, since $g_{V_{\epsilon,k}}$ is an $\AC$-metric, we know from \cite{DM2014} that $(PI)_{\mu_2}$ holds on 
$(V_{\epsilon,k},g_{V_{\epsilon,k}}, \mu_2)$.  Hence, we compute that
\begin{equation}
\begin{aligned}
  \int_{Q(r)} |f-\barf_Q|^2 d\mu_1 d\mu_2  & \le 2\int_{Q(r)} \left( |f-\barf_2|^2+ |\barf_2-\barf_Q|^2 \right)d\mu_1 d\mu_2 \\
    &\le  2 \int_{B_1}  \left( C_2 \left( \frac{r}{((1-c)\rho(p))^{\frac{n-k}{k(n-1)}}} \right)^2\int_{B_2} |d_2 f|^2_{g_{V_{\epsilon,k}}}  d\mu_2 + \int_{B_2} |\barf_2-\barf_Q|^2 d\mu_2 \right) d\mu_1 \\
    &= 2 \int_{B_1} \left( C_2 r^2\int_{B_2} |d_2 f|^2_{((1-c)\rho(p))^{\frac{n-k}{k(n-1)}}g_{V_{\epsilon,k}}}  d\mu_2 + \int_{B_2} |\barf_2-\barf_Q|^2 d\mu_2 \right) d\mu_1,
\end{aligned}    
\label{mp.26}\end{equation}
where $d_i$ corresponds to the exterior differential taken only on the factor $B_i$ and $C_2>1$ is the constant for the property $(PI)_{\mu_2}$ on  $(V_{\epsilon,k},g_{V_{\epsilon,k}}, \mu_2)$.  Now, since $(1-c)\rho(p)\le \rho(q) \le (1+c)\rho(p)$ for $q\in Q(r)$,  the first term on the right hand side of \eqref{mp.26} is bounded by 
$$
2 C_2 \left( \frac{1+c}{1-c} \right)^{\frac{2(n-k)}{k(n-1)}} r^2\int_Q |df|^2_g d\mu_{a,b}.
$$
For the second term, we can instead apply $(PI)_{\mu_1}$ on $B_1$,
\begin{equation}
    \int_{Q(r)} |\barf_2-\barf_Q|^2 d\mu_1 d\mu_2 \le \int_{B_2} \left( C_1r^2 \int_{B_1} |d\barf_2|^2_{g_1} d\mu_1 \right) d\mu_2.
\label{mp.27}\end{equation}
Since 
$$
d_1\barf_2 = \frac{1}{\mu_2(B_2)} \int_{B_2} d_1f d\mu_2,
$$
we see from the Cauchy-Schwarz inequality that 
\begin{equation}
      |d_1\barf_2|^2_{g_1}\le \frac{1}{\mu_2(B_2)}\int_{B_2} |d_1f|^2_{g_1} d\mu_2,
\label{mp.28}\end{equation}
where $g_1= d\rho^2 + \rho^2 d\theta^2$ is the Euclidean metric on $\bbC$.  Thus, inserting \eqref{mp.28} in \eqref{mp.27}, we see that
\begin{equation}
\begin{aligned}
\int_{Q(r)} |\barf_2-\barf_Q|^2 d\mu_1 d\mu_2 &\le \int_{B_2} \left( C_1r^2 \int_{B_1} \left( \frac{1}{\mu_2(B_2)} \int_{B_2} |d_1f|^2_{g_1}d\mu_2 \right)d\mu_1 \right) d\mu_2 \\
       &\le C_1r^2\int_{Q(r)} |d_1f|^2_{g_1} d\mu_{a,b} \\
       &\le C_1r^2\int_{Q(r)} |d f|^2_{g} d\mu_{a,b}.
\end{aligned}
\label{mp.29}\end{equation}
Hence, this shows that $(PI)_{\mu_{a,b}}$ holds on $Q(r)$.  Now, returning  to $B(p,r)$, notice that we have the sequence of inclusions
\begin{equation}
B(p,r)\subset Q(r) \subset B\left(p, r+ \left(  \frac{1+c}{1-c}\right)^{\frac{n-k}{k(n-1)}} r\right) \subset B(p,3r)
\label{mp.30}\end{equation}
for $c\in(0,\frac14)$ sufficiently small.  Hence, we have that 
\begin{equation}
\begin{aligned}
\int_{B(p,r)} |f-\barf_{B(p,r)}|^2 d\mu_{a,b} &= \inf_a \int_{B(p,r)} |f-a|^2d\mu_{a,b} \\
  & \le \int_{B(p,r)} |f-\barf_Q|^2 d\mu_{a,b} \\
  &\le \int_{Q(r)} |f-\barf_Q|^2 d\mu_{a,b}  \\
  &\le Cr^2 \int_{Q(r)} |df|^2_g d\mu_{a,b} \quad \mbox{by} \; (PI)_{\mu_{a,b}} \; \mbox{on} \; Q(r), \\
  &\le Cr^2 \int_{B(p, \delta^{-1}r)} |df|^2_g d\mu_{a,b} \quad \mbox{with}  \; \delta=\frac13.
\end{aligned}
\label{mp.31}\end{equation}
\end{proof}
\begin{corollary}
If $g$ is a warped $\QAC$-metric on $W_{\epsilon,k}$ and $a,b$ satisfy \eqref{mp.15}, then the heat kernel $H_{\cL}$ of $\cL+\cR$ with $\cR\ge V:= -\frac{\Delta(\rho^{\frac{a}2}w^{\frac{b}2})}{\rho^{\frac{a}2}w^{\frac{b}2}}$ satisfies the estimate
$$
    |H_{\cL}(t,z,z')|\le H_{\Delta+V}(t,z,z')\asymp (\mu_{a,b}(B(z,\sqrt{t})) \mu_{a,b}(B(z',\sqrt{t})))^{-\frac12} e^{-\frac{cd(z,z')^2}t}.
$$
\label{mp.32}\end{corollary}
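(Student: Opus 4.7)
The statement is really the assembly of the heat-kernel machinery built up over the preceding pages, so my plan is short: I would verify that everything in sight is in the right form to plug into the Grigor'yan--Saloff-Coste bound (Theorem~\ref{mp.6}) via the Doob-transform framework of Lemma~\ref{mp.4}.

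Concretely, I would set $h := \rho^{a/2} w^{b/2}$, so that $d\mu_{a,b} = h^2 \, dg$ and $(W_{\epsilon,k}, g, \mu_{a,b})$ is a complete weighted Riemannian manifold in the sense of \cite{GS2005}. With this choice, the potential appearing in the Doob transform~\eqref{mp.3} is, by definition, $V = -\Delta h / h$, which is exactly the lower bound imposed on $\mathcal{R}$ in the hypothesis. The completeness of $g$ and the boundedness of its covariant derivatives are guaranteed by Proposition~\ref{gb.12}.

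Next I would invoke Lemma~\ref{mp.4} to obtain, on the one hand, the pointwise domination
\[
 |H_{\mathcal{L}}(t,z,z')| \le H_{\Delta+V}(t,z,z'),
\]
and on the other hand the Doob identity $H_{\Delta+V}(t,z,z') = h(z)h(z') H_{\Delta_\mu}(t,z,z')$. To apply Theorem~\ref{mp.6} to $H_{\Delta_\mu}$ I need $(VD)_\mu$ and $(PI)_\mu$ for $(W_{\epsilon,k}, g, \mu_{a,b})$, and this is precisely the content of Theorem~\ref{mp.24}, whose hypotheses $a+2n>0$ and $b+2n-2>0$ are built into condition~\eqref{mp.15}. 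Applying Theorem~\ref{mp.6} then gives
\[
 H_{\Delta_\mu}(t,z,z') \asymp \bigl(\mu_{a,b}(B(z,\sqrt{t}))\,\mu_{a,b}(B(z',\sqrt{t}))\bigr)^{-1/2} e^{-c d(z,z')^2/t},
\]
and substituting back into the Doob identity yields the advertised two-sided estimate (up to the $h(z)h(z')$ factor, which is the form recorded in Corollary~\ref{mp.7}).

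Since every ingredient is already in place, there is no genuine obstacle left: the only real work was verifying $(VD)_\mu$ and $(PI)_\mu$, which is the content of Theorem~\ref{mp.24}. In particular, the heart of the argument had to confront the fact that a warped $\QAC$-metric fails to be locally a product on the region $V_1 = \{w < 1-c\}$, so Poincar\'e on remote balls there was established by sandwiching each remote ball between products of the form $B_1(p_1,r) \times B_2(p_2, r/\rho(p)^{(n-k)/(k(n-1))})$ and running a Fubini-style argument against the $\AC$ Poincar\'e inequalities on each factor; this is the only nontrivial geometric input and it has already been carried out. The proof of Corollary~\ref{mp.32} itself is thus a two-line concatenation of Lemma~\ref{mp.4}, Theorem~\ref{mp.24}, and Corollary~\ref{mp.7}.
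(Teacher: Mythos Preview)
Your proposal is correct and matches the paper's approach exactly: the paper states Corollary~\ref{mp.32} without proof, treating it as the immediate concatenation of Lemma~\ref{mp.4}, Theorem~\ref{mp.24}, and Corollary~\ref{mp.7} that you spell out. Your parenthetical remark about the $h(z)h(z')$ factor is also apt---the displayed estimate in the paper's statement of the corollary appears to omit this factor, and the form you derive (matching Corollary~\ref{mp.7}) is the one actually used downstream.
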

One useful consequence of this estimate is that the Sobolev inequality holds for warped $\QAC$-metrics.

\begin{corollary}[Sobolev inequality] If $g$ is a warped $\QAC$-metric on $W_{\epsilon,k}$, then there exists a constant $C_S>0$ such that 
\begin{equation}
     \left( \int_{W_{\epsilon,k}} |u|^{\frac{2n}{n-1}} dg\right) \le C_S \int |df|^2_g dg \quad \forall u\in \CI_c(W_{\epsilon,k}).
 \label{mp.33}\end{equation}
\label{mp.34}\end{corollary}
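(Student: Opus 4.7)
The plan is to derive the Sobolev inequality from the Gaussian heat kernel upper bound of Corollary~\ref{mp.32}, via the classical equivalence (due to Varopoulos) between uniform on-diagonal heat kernel bounds and Sobolev embeddings on complete manifolds.

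First, I would specialize Corollary~\ref{mp.32} to the weights $a=b=0$. Since $n\ge 3$, condition \eqref{mp.15} is trivially satisfied. With these weights the function $h = \rho^{a/2}w^{b/2}$ equals $1$, so the potential $V$ vanishes and $\mu_{0,0}$ coincides with the Riemannian volume density $dg$. Taking the scalar function $\cR \equiv 0$ reduces $\cL$ to the Laplacian $\Delta$ of $g$, and the corollary yields
$$
H_\Delta(t,z,z') \le C_1\, \bigl(\vol(B(z,\sqrt{t}))\,\vol(B(z',\sqrt{t}))\bigr)^{-1/2} e^{-c\,d(z,z')^2/t}
$$
for all $t>0$ and all $z,z' \in W_{\epsilon,k}$.

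The next step is to upgrade this to the uniform on-diagonal estimate $H_\Delta(t,z,z) \le C_2\, t^{-n}$ valid for all $t>0$ and all $z$. For this I need a uniform lower volume bound $\vol(B(p,r)) \ge c_0\, r^{2n}$ across every point $p$ and every radius $r>0$. For non-remote balls with $r \ge c\rho(p)$, this is Corollary~\ref{mp.20b} applied with $a=b=0$. For remote balls, a case-by-case inspection of Proposition~\ref{mp.17} with $a=b=0$ shows that $\cR(p,r;0,0) \asymp r^{2n}$ in every regime, regardless of whether $w(p)$ is close to $1$ or small. At very small scales, the estimate follows from the bounded geometry of Proposition~\ref{gb.12}. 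Substituting these lower bounds into the heat kernel estimate and setting $z'=z$ produces the desired on-diagonal decay $H_\Delta(t,z,z) \le C_2\, t^{-n}$.

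Finally, I would invoke the classical theorem of Varopoulos (as presented in Saloff-Coste's monograph) which asserts that on a complete Riemannian manifold, a uniform on-diagonal bound $H(t,z,z) \le C\, t^{-N/2}$ for all $t>0$ is equivalent to a Sobolev inequality with conjugate exponent $\tfrac{2N}{N-2}$. Applied to our setting with $N = 2n$ this yields
$$
\Bigl(\int_{W_{\epsilon,k}} |u|^{\frac{2n}{n-1}} dg\Bigr)^{\frac{n-1}{n}} \le C_S \int |du|^2_g\, dg \qquad \forall u \in \CI_c(W_{\epsilon,k}),
$$
which gives the stated inequality. I do not anticipate a substantive obstacle here: the heavy analytic lifting was already carried out in Theorem~\ref{mp.24} (which yields the doubling and Poincar\'e properties needed for the Grigor'yan--Saloff-Coste heat kernel bound), and the passage from a uniform on-diagonal heat kernel estimate to the Sobolev inequality is by now a textbook consequence on a complete manifold with bounded geometry.
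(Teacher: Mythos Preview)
Your proposal is correct and follows essentially the same approach as the paper: specialize Corollary~\ref{mp.32} to $a=b=0$ so that $V=0$ and $\cR=0$, obtain a Gaussian bound on the heat kernel of $\Delta$, and then invoke the well-known equivalence between such bounds and the Sobolev inequality (the paper cites Grigor'yan for this last step). You have simply made explicit the intermediate on-diagonal estimate and the volume lower bound $\vol(B(p,r))\asymp r^{2n}$ that the paper leaves implicit in its citation.
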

\begin{proof}
When $a=b=0$, we have that $V=0$, so we can take $\cR=0$ in Corollary~\ref{mp.32}.  This gives a  Gaussian bound for $H_{\Delta}$, which is well-known to be equivalent to the existence of a constant $C_S>0$ such that \eqref{mp.33} holds; see \cite{Grigoryan}.
\end{proof}

More importantly, we now have all the necessary estimates to obtain the following results for the Laplacian.
\begin{theorem}
Let $g$ be a warped $\QAC$-metric on $W_{\epsilon,k}$ and denote by $\Delta$ its corresponding Laplacian.  Let $a$ and $b$ be real numbers such that
\begin{equation}
  a+2n>2, \quad  \mbox{and} \quad b+ 2n-2>2.
\label{mp.35}\end{equation}
If $\cR\in \rho^{-2}w^{-2}\CI_{\Qb}(W_{\epsilon,k})$ is a smooth function on $W_{\epsilon,k}$ such that $\cR\ge V:= -\frac{\Delta \left( \rho^{\frac{a}2} w^{\frac{b}2}\right)}{\rho^{\frac{a}2} w^{\frac{b}2}}$, then for all $\ell\in \bbN_0$ and $\alpha\in (0,1)$, the mappings
\begin{equation}
\begin{gathered}
      \Delta+\cR:  \rho^{\delta+n}w^{\tau+n-1}H^{\ell+2}_{\Qb}(W_{\epsilon,k})\to  \rho^{\delta+n-2}w^{\tau+n-3}H^{\ell}_{\Qb}(W_{\epsilon,k}), \\    
      \Delta+ \cR: \rho^{\delta}w^{\tau} \cC^{\ell+2,\alpha}_{\Qb}(W_{\epsilon,k}) \to \rho^{\delta-2}w^{\tau-2} \cC^{\ell,\alpha}_{\Qb}(W_{\epsilon,k}), 
\end{gathered}      
\label{mp.36}\end{equation}
are isomorphisms provided that 
\begin{equation}
 2-2n-\frac{a}2<\delta < \frac{a}2 \quad \mbox{and} \quad 2-(2n-2)-\frac{b}2<\tau < \frac{b}2.
\label{mp.37}\end{equation}
\label{mp.38}\end{theorem}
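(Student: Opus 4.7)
The plan is to follow the strategy developed by Degeratu-Mazzeo in \cite{DM2014} for $\QAC$-metrics, now implemented in the warped setting using the heat-kernel and volume estimates just established. First, I would apply the Doob transform of Lemma~\ref{mp.4} with $h = \rho^{a/2} w^{b/2}$: the conjugate operator $h^{-1}(\Delta + \cR)h$ equals $\Delta_{\mu_{a,b}} + V + \cR$ on the weighted manifold $(W_{\epsilon,k}, g, \mu_{a,b})$ with $d\mu_{a,b} = \rho^a w^b dg$, and the assumption $\cR \ge V$ makes it a non-negative symmetric operator on $L^2(W_{\epsilon,k};d\mu_{a,b})$. Bounded geometry (Proposition~\ref{gb.12}) gives essential self-adjointness, and Theorem~\ref{mp.24} supplies $(VD)_{\mu_{a,b}}$ and $(PI)_{\mu_{a,b}}$ under the weight hypothesis \eqref{mp.15}, which is implied by \eqref{mp.35}.

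Next I would extract the Green's function. Corollary~\ref{mp.32} combined with the volume estimates of Propositions~\ref{mp.9} and~\ref{mp.17} and Corollary~\ref{mp.20b} gives pointwise bounds
\[
|G_{\cL}(z,z')| \le C\, h(z)h(z') \int_0^\infty \bigl(\mu_{a,b}(B(z,\sqrt{t}))\mu_{a,b}(B(z',\sqrt{t}))\bigr)^{-1/2} e^{-cd(z,z')^2/t}\,dt,
\]
split according to whether $z,z'$ lie in the asymptotically conical region $V_2$ or the warped-product region $V_1$ of \eqref{mp.10}. The hypotheses~\eqref{mp.35} guarantee $a+2n>2$ and $b+2n-2>2$, which makes the $t$-integral convergent and yields the correct polynomial decay in $d(z,z')$ in each asymptotic regime.

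The third step is to read off the mapping property. Writing $\Delta + \cR = h(\Delta_{\mu_{a,b}}+V+\cR)h^{-1}$ and convolving against $G_{\cL}$ defines the candidate inverse: the shift of the weights by $(n,n-1)$ in the Sobolev spaces of \eqref{ww.4} exactly absorbs the factor $\rho^{-2n}w^{-(2n-2)}$ needed to pass from $dg$ to $d\mu_{a,b}$. The key estimate to verify is the weighted bound
\[
\int_{W_{\epsilon,k}} G_{\cL}(z,z') \rho(z')^{\delta-\frac{a}{2}} w(z')^{\tau-\frac{b}{2}}\, d\mu_{a,b}(z') \le C \rho(z)^{\delta-\frac{a}{2}} w(z)^{\tau-\frac{b}{2}},
\]
whose convergence at infinity requires precisely the upper bounds $\delta < a/2$ and $\tau < b/2$ of \eqref{mp.37}, while duality via the $\mu_{a,b}$-pairing against a test function with the reflected weight converts the lower bounds $\delta > 2-2n-a/2$ and $\tau > 2-(2n-2)-b/2$ of \eqref{mp.37} into a symmetric integrability condition; these endpoints correspond to the borderline homogeneities at which $\rho^{2-a-2n}$ and $w^{2-b-(2n-2)}$ would fail to lie in the target space.

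The main obstacle is step three: deriving the weighted Green's function estimate from the case-by-case volume bounds of Proposition~\ref{mp.17}. One has to treat separately the near-diagonal regime, the remote regime with both points in the $\AC$-region $V_2$, the remote regime with both points in $V_1$, and the mixed regime; in the $V_1$ case the product factorisation of the measure exploited in \eqref{mp.19} plays the role of the iterated product structure used in \cite[\S4]{DM2014}. Once the Sobolev isomorphism is established, the Hölder version follows by localising the inverse and applying standard interior Schauder estimates on unit $g_{\Qb}$-balls, valid thanks to the bounded geometry of Proposition~\ref{gb.12} and the embedding in Lemma~\ref{inc.1}. Injectivity in the range~\eqref{mp.37} comes from non-negativity of $\Delta_{\mu_{a,b}}+V+\cR$ together with the Sobolev inequality of Corollary~\ref{mp.34}; surjectivity is the convolution construction above, and the symmetry of \eqref{mp.37} around $(a/2,b/2)$ reflects the formal self-adjointness of the Doob-transformed operator with respect to $d\mu_{a,b}$.
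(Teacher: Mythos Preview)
Your proposal is correct and follows essentially the same route as the paper: Doob transform with $h=\rho^{a/2}w^{b/2}$, Gaussian heat-kernel bound via Theorem~\ref{mp.24} and Corollary~\ref{mp.32}, then a Schur-type estimate for the Green's operator using the volume bounds of Proposition~\ref{mp.17} and Corollary~\ref{mp.20b}, exactly as in \cite{DM2014}. One small remark: for the H\"older statement the paper does not pass through the Sobolev isomorphism or invoke Lemma~\ref{inc.1}; it uses the Green's function pointwise bound directly to land in $L^\infty$ and then upgrades to $\rho^\delta w^\tau\cC^{\ell+2,\alpha}_{\Qb}$ by Schauder estimates and the bounded geometry of the $\Qb$-metric, so your reference to Lemma~\ref{inc.1} is unnecessary here.
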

\begin{proof}
We proceed exactly as in \cite{DM2014}, the idea being to use Corollary~\ref{mp.32} to show that the Green's operator defines an inverse
$$
  G_{\Delta+\cR}: \rho^{\delta+n-2}w^{\tau+n-3}H^{\ell}_{\Qb}(W_{\epsilon,k})\to \rho^{\delta+n}w^{\tau+n-1}H^{\ell+2}_{\Qb}(W_{\epsilon,k})
$$
to \eqref{mp.36} using Corollary~\ref{mp.32} and the Schur test.  In \cite{DM2014}, this is done by using estimates on the volume of remote balls and non-remote balls obtained in
\cite[Proposition~4.15 and Proposition~5.2]{DM2014}.  In our setting, the exact analogues are Corollary~\ref{mp.20b} and Proposition~\ref{mp.17}, so we can apply the argument of \cite{DM2014} verbatim.  For the maps on Hölder spaces, we can also use these estimates on the Green's operator to see that it defines an inverse
$$
G_{\Delta+\cR}: \rho^{\delta-2}w^{\tau-2}\cC^{\ell,\alpha}(W_{\epsilon,k})\to L^{\infty}(W_{\epsilon,k}).
$$
Using the Schauder estimates and the fact that a $\Qb$-metric has bounded geometry, we then see that in fact it defines an inverse
$$
G_{\Delta+\cR}: \rho^{\delta-2}w^{\tau-2}\cC^{\ell,\alpha}(W_{\epsilon,k})\to \rho^{\delta}w^{\tau}\cC^{\ell+2,\alpha}(W_{\epsilon,k}).$$
\end{proof}
We will be mostly interested in the following special case of Theorem~\ref{mp.38}.
\begin{corollary}
Let $g$ be a warped $\QAC$-metric on $W_{\epsilon,k}$ and denote by $\Delta$ its corresponding Laplacian. Then for all $\ell\in \bbN_0$ and $\alpha\in (0,1)$, the mappings
\begin{equation}
\begin{gathered}
      \Delta:  \rho^{\delta+n}w^{\tau+n-1}H^{\ell+2}_{\Qb}(W_{\epsilon,k})\to  \rho^{\delta+n-2}w^{\tau+n-3}H^{\ell}_{\Qb}(W_{\epsilon,k}), \\    
      \Delta: \rho^{\delta}w^{\tau} \cC^{\ell+2,\alpha}_{\Qb}(W_{\epsilon,k}) \to \rho^{\delta-2}w^{\tau-2} \cC^{\ell,\alpha}_{\Qb}(W_{\epsilon,k}), 
\end{gathered}      
\label{mp.39}\end{equation}
are isomorphisms provided that
\begin{equation}
 2-2n<\delta < 0 \quad \mbox{and} \quad 2-(2n-2)<\tau < 0.
\label{mp.40}\end{equation}
\label{mp.41}\end{corollary}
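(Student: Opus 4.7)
The plan is to derive Corollary~\ref{mp.41} as a direct specialization of Theorem~\ref{mp.38}, taking $a=b=0$ and the auxiliary potential $\cR\equiv 0$. The work consists entirely in verifying that this choice is admissible and that it delivers exactly the weight ranges claimed in the statement.

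First I would compute the obstructing potential $V:=-\Delta(\rho^{a/2}w^{b/2})/(\rho^{a/2}w^{b/2})$ appearing in Theorem~\ref{mp.38}. Specializing to $a=b=0$ gives $\rho^{a/2}w^{b/2}\equiv 1$ and hence $V\equiv 0$, so the hypothesis $\cR\ge V$ is satisfied trivially by $\cR=0$ (which is evidently a smooth function lying in $\rho^{-2}w^{-2}\CI_{\Qb}(W_{\epsilon,k})$, so the regularity assumption is also met). Next I would check the numerical side conditions $a+2n>2$ and $b+2n-2>2$: with $a=b=0$ these become $2n>2$ and $2n-2>2$, both of which hold because $n\ge 3$ throughout the paper.

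Finally, the admissible weight intervals provided by Theorem~\ref{mp.38}, namely
\begin{equation*}
2-2n-\tfrac{a}{2}<\delta<\tfrac{a}{2},\qquad 2-(2n-2)-\tfrac{b}{2}<\tau<\tfrac{b}{2},
\end{equation*}
collapse under $a=b=0$ to exactly $2-2n<\delta<0$ and $2-(2n-2)<\tau<0$, which are the ranges asserted in Corollary~\ref{mp.41}. Invoking Theorem~\ref{mp.38} then simultaneously yields both the weighted Sobolev and the weighted H\"older isomorphism statements. There is essentially no obstacle here: the Corollary is just the cleanest case of the preceding theorem, and the substantive analytic work -- the Gaussian heat kernel estimate from Corollary~\ref{mp.32}, the Schur test bound on the Green's operator, and the Schauder upgrade to H\"older spaces -- is already carried out inside the proof of Theorem~\ref{mp.38}, so nothing further needs to be done.
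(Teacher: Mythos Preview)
Your proposal is correct and follows exactly the paper's own approach: the paper's proof is the single sentence ``It suffices to take $a=b=0$ and $\cR=0$ in Theorem~\ref{mp.38},'' and you have simply spelled out the verifications (that $V\equiv 0$, that the numerical conditions $2n>2$ and $2n-2>2$ hold for $n\ge 3$, and that the weight intervals specialize correctly) which the paper leaves implicit.
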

\begin{proof}
It suffices to take $a=b=0$ and $\cR=0$ in Theorem~\ref{mp.38}.
\end{proof}

\section{Examples of Calabi-Yau warped $\QAC$-metrics }\label{cy.0}

To solve the complex Monge-Ampère equation \eqref{w.35}, we will first improve the decay of the Ricci potential using Corollary~\ref{mp.41}.  

\begin{lemma}
Assume that $n=3$ and $k=2$ or that $n\ge 4$ and $3\le k\le n-1$.   
  Then there exists $v\in \rho^{-\mu\left(  \frac{n(k-1)}{k(n-1)}\right)+2} w^{3-\mu} \CI_{\Qb}(W_{\epsilon,k})$ such that 
$$
          \frac{\sqrt{-1}}2 \pa \db \left( u_{\epsilon,k}+v \right)
$$ 
is the Kähler form of a warped $\QAC$-metric with Ricci potential 
$$
      r_v:= \log \left( \frac{\left( \frac{\sqrt{-1}}2\pa\db (u_{\epsilon,k}+v) \right)^n}{c_n\Omega_{\epsilon}^n \wedge \overline{\Omega}_{\epsilon}^n} \right)\in x_1^{4-2s} x_2^{2\mu} \CI_{\Qb}(W_{\epsilon,k})\subset
      \rho^{-2-\nu}\CI_{\Qb}(W_{\epsilon,k})
$$
for some $\nu>0$, where $s=0$ if $n\ge 4$ and $s>0$ is sufficiently small if instead $n=3$ and $k=2$.  
\label{cy.1}\end{lemma}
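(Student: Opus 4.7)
The approach is to apply the isomorphism property of the Laplacian from Corollary~\ref{mp.41} to find a K\"ahler correction that kills the linear-order response of the Monge--Amp\`ere equation, and then to estimate the residual quadratic error. Let $g_{\epsilon,k}$ denote the warped $\QAC$-metric whose K\"ahler form is $\frac{\sqrt{-1}}{2}\pa\db u_{\epsilon,k}$, with Ricci potential $r_{\epsilon,k}\in x_1 x_2^{\mu}\cA_{\phg}(\cW_{\epsilon,k})$ from \eqref{w.34}. Using $w=x_1$ and $x_2=\rho^{-n(k-1)/(k(n-1))}/w$ away from a compact set, this lies in $\rho^{\delta-2}w^{\tau-2}\CI_{\Qb}$ for $\delta=2-\mu n(k-1)/(k(n-1))$ and $\tau=3-\mu$.

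The first substantive step is to verify, for the hypothesized $(n,k)$ and using \eqref{ac.10}, that $(\delta,\tau)$ lies in the admissible window $(2-2n,0)\times(4-2n,0)$ of Corollary~\ref{mp.41}. For $n=3$, $k=2$ (the critical case $k=2n/3$), we take $\mu=4-s$ from Remark~\ref{log.1} and compute $(\delta,\tau)=(-1+3s/4,\,-1+s)$, admissible for $0<s<1$. For $n\ge 4$ and $3\le k\le n-1$ the inequalities follow by substituting $\mu=k(n-1)/(n-k)$ or $\mu=2n-2$; the hypothesis $k\ge 3$ is precisely what prevents $\tau$ from reaching $0$ in the borderline case $n=4$, $k=2$. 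Corollary~\ref{mp.41} then produces $v\in\rho^{\delta}w^{\tau}\CI_{\Qb}(W_{\epsilon,k})$---matching the claimed weighted space---solving a linear equation of the form $\Delta_{g_{\epsilon,k}}v=2r_{\epsilon,k}$ (with the constant chosen so that the linear term in the Monge--Amp\`ere expansion below cancels $r_{\epsilon,k}$ exactly). Since $\delta,\tau<0$, the function $v$ vanishes at infinity, so $\frac{\sqrt{-1}}{2}\pa\db(u_{\epsilon,k}+v)>0$ outside a compact set; on the complementary compact region, global positivity is ensured by the convexification trick from the proof of Corollary~\ref{w.31}, replacing $v$ with $\eta\circ v+\delta_0 \phi w_{\epsilon,k}$ for an appropriate convex $\eta$, cutoff $\phi$, and small $\delta_0>0$, without modifying the asymptotic behavior.

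The final step is to compute the new Ricci potential via the Monge--Amp\`ere expansion
\[
r_v=r_{\epsilon,k}+\log\det\bigl(I+g_{\epsilon,k}^{-1}\tfrac{\sqrt{-1}}{2}\pa\db v\bigr)=r_{\epsilon,k}-\tfrac{1}{2}\Delta_{g_{\epsilon,k}}v+Q(\pa\db v),
\]
where by construction the linear term cancels $r_{\epsilon,k}$ exactly, leaving the quadratic remainder $Q(\pa\db v)$. The pointwise bound $|\pa\db v|_{g_{\epsilon,k}}=\chi^2|\pa\db v|_{g_{\Qb}}\in\rho^{\delta-2}w^{\tau-2}\CI_{\Qb}=x_1 x_2^{\mu}\CI_{\Qb}$ (using $\chi^2=\rho^{-2}w^{-2}$ and the fact that $\Qb$-derivatives preserve weighted $\CI_{\Qb}$-spaces) gives the naive estimate $r_v\in x_1^2 x_2^{2\mu}\CI_{\Qb}\subset\rho^{-2-\nu}\CI_{\Qb}$ for some $\nu>0$, which is already enough for the Tian--Yau step of Section~\ref{cy.0}. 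The main technical obstacle is sharpening this to the claimed $x_1^{4-2s}x_2^{2\mu}\CI_{\Qb}$: this requires exploiting the warped product structure \eqref{wa.1} of $g_{\epsilon,k}$ near $\cH_1$ together with the specific leading polyhomogeneous term of $r_{\epsilon,k}$ at $\cH_1$ coming from Lemma~\ref{w.21}, in order to extract additional cancellation of $Q(\pa\db v)$ at that boundary hypersurface.
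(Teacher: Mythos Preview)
Your overall strategy---solve $\Delta_{g_{\epsilon,k}}v=2r_{\epsilon,k}$ via Corollary~\ref{mp.41} and estimate the quadratic remainder in the Monge--Amp\`ere expansion---is the same as the paper's, and your verification that the weights $(\delta,\tau)=(2-\mu\tfrac{n(k-1)}{k(n-1)},\,3-\mu)$ lie in the admissible window is correct.  The problem is what happens afterwards.

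Your assertion that the one-step remainder estimate $r_v\in x_1^2x_2^{2\mu}\CI_{\Qb}$ already lies in $\rho^{-2-\nu}\CI_{\Qb}$ is false.  Since $\rho^{-1}\asymp x^{\,k(n-1)/(n(k-1))}$ with $\tfrac{k(n-1)}{n(k-1)}>1$ whenever $k<n$, the inclusion $x_1^2x_2^{2\mu}\subset x^2=\rho^{-2n(k-1)/(k(n-1))}$ only gives decay strictly \emph{slower} than $\rho^{-2}$ at $\cH_1$ (for $k=2$, $n=3$ one gets $\rho^{-3/2}$).  Thus a single correction is not enough even for the Tian--Yau threshold, let alone for the claimed $x_1^{4-2s}$ exponent, and your proposed route to the sharper bound---extracting extra cancellation from the warped-product model near $\cH_1$---is not what is needed.

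What the paper actually does is simply \emph{iterate}.  After the first step one has $r_1\in x_1^2x_2^{2\mu}\CI_{\Qb}\subset x_1^2x_2^{\mu}\CI_{\Qb}$, which in the $(\rho,w)$ weights is $\rho^{\delta-2}w^{\tau-2}$ with the same $\delta$ but now $\tau=4-\mu$.  One then applies Corollary~\ref{mp.41} a second time (taking $\tau=4-\mu-s$, with $s=0$ if $n\ge4$ and $s$ slightly larger than $(2n-2)-\mu$ when $n=3$, $k=2$, so that $\tau<0$) to produce $v_2$; the quadratic remainder of $\omega_1+\sqrt{-1}\pa\db v_2$ then lands in $x_1^{4-2s}x_2^{2\mu}\CI_{\Qb}\subset\rho^{-2-\nu}\CI_{\Qb}$ by the inequality $(n+1)k>2n$.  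There is one further wrinkle you would discover: for $k=3$ and $n\ge9$ one has $\mu\le4$, so $\tau=4-\mu$ is not negative and different weights (using the full $x_2^{2\mu}$ decay of $r_1$ rather than just $x_2^{\mu}$) are required in the second step.  The final $v$ is $2(v_1+v_2)$.
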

\begin{proof}
We will follow the strategy of \cite[Lemma~5.3]{CDR2016}.  Let $g_{\epsilon,k}$ be the Kähler metric with Kähler form $\omega_{\epsilon,k}:=\frac{\sqrt{-1}}2 \pa \db u_{\epsilon,k}$ and let 
$$
r_{\epsilon,k} \in x_1x_2^{\mu}\CI_{\Qb}(W_{\epsilon,k})= \rho^{-\mu\frac{n(k-1)}{k(n-1)}} w^{1-\mu}\CI_{\Qb}(W_{\epsilon,k})
$$ 
be its Ricci potential as defined in \eqref{w.34}.  Denote by $\Delta= -g_{\epsilon,k}^{ij}\nabla_i\nabla_j$ the Laplacian of $g_{\epsilon,k}$.  For $k$ and $n$ as in the statement of the lemma and with $\mu$ chosen sufficiently close to $2n-2$ when $k=\frac{2n}3$, we see that we can take $\delta-2= -\mu\frac{n(k-1)}{k(n-1)}$ and $\tau-2= 1-\mu$ in Corollary~\ref{mp.41}, so that there exists a unique 
$$
v_1\in \rho^{\delta}w^{\tau}\CI_{\Qb}(W_{\epsilon,k})= x_1^{3-\frac{2k(n-1)}{n(k-1)}} x_2^{\mu- \left( \frac{2k(n-1)}{n(k-1)} \right)}\CI_{\Qb}(W_{\epsilon,k})
$$ 
such that
$$
       \Delta v_1= 2 r_{\epsilon,k}.
$$ 
Taking $\mu$ sufficiently close to $2n-2$ in the case $k=\frac{2n}3$, notice that for our choices of $\delta$ and $\tau$,  
\begin{equation}
\begin{gathered}
      3-2\frac{k(n-1)}{n(k-1)}=  \frac{(n+2)k-3n}{n(k-1)} >0  \\
      \mu- 2\left( \frac{k(n-1)}{n(k-1)} \right)=\frac{\mu n(k-1)-2k(n-1) }{n(k-1)} >0,
\end{gathered}      
\end{equation}
so that $v_1$ decays at infinity.  Hence, we see that $\omega_{\epsilon,k} + \sqrt{-1}\pa\db v_1$ is positive definite outside a compact set.  In fact, thanks to the decay of $v_1$ at infinity, we can if needbe truncate $v_1$ outside a large compact set and assume without loss of generality that $\omega_1:=\omega_{\epsilon,k}+ \sqrt{-1}\pa\db v_1$ is positive definite everywhere on $W_{\epsilon,k}$.  Now, we compute that
\begin{equation}
(\omega_1)^n= (\omega_{\epsilon,k}+ \sqrt{-1}\pa\db v_1)^n= (1-\frac12 \Delta v_1)\omega_{\epsilon,k}^n + \frac{n!}{2!(n-2)!}\omega^{n-2}(\sqrt{-1}\pa\db v_1)^2+ \cdots + (\sqrt{-1}\pa\db v_1)^n,
\label{dev.1}\end{equation}
so that 
$$
   \omega_1^n- (1-r_{\epsilon,k})\omega_{\epsilon,k}^n\in (x_1 x_2^{\mu})^{2}\CI_{\Qb}(W_{\epsilon,k}).
$$
Hence, the Ricci potential of $\omega_1$ is such that 
$$
   r_1:= \log \left( \frac{\omega_1^n}{c_n\Omega_{\epsilon}^n\wedge \overline{\Omega}_{\epsilon}^n} \right) \in x_1^2 x_2^{2\mu}\CI_{\Qb}(W_{\epsilon,k})\subset x_1^2x_2^{\mu}\CI_{\Qb}(W_{\epsilon,k}).
$$
In particular, we have a better rate of decay for $r_1$.  Assuming for the moment that $4\le n\le 8$ when $k=3$, we can repeat this argument, this time however with $\omega_1$ and $r_1$ instead of $\omega_{\epsilon,k}$ and $r_{\epsilon,k}$, taking again $\delta= 2-\mu\frac{n(k-1)}{k(n-1)}$ in Corollary~\ref{mp.41}, but now with $\tau=4-\mu-s$ with $s=0$ if $n\ge 4$ and $s>(2n-2)-\mu\ge0$ sufficiently small when $n=3$ and $k=2$,  to find  $v_2\in \rho^{\delta}w^{\tau}\CI_{\Qb}(W_{\epsilon,k})$ such that 
$\omega_2 = \omega_1+\sqrt{-1}\pa\db v_2$ is positive definite with Ricci potential $r_2$ such that
$$
 r_2:=\log \left( \frac{\omega_1^n}{c_n\Omega_{\epsilon}^n\wedge \overline{\Omega}_{\epsilon}^n} \right) \in x_1^{4-2s} x_2^{2\mu}\CI_{\Qb}(W_{\epsilon,k}) \subset  x^{4-2s}\CI_{\Qb}( W_{\epsilon,k})= \rho^{-(4-2s)\frac{n(k-1)}{k(n-1)}}\CI_{\Qb}(W_{\epsilon,k}).
 $$
Now, since $k\ge 2$, we see that 
\begin{equation}
   (4-2s)\left( \frac{n(k-1)}{k(n-1)}\right)>2 \quad \Longleftrightarrow \quad  (n+1)k>2n + sn(k-1),
\label{ineq.1}\end{equation}
which holds if $s=0$ or $s>0$ is chosen small enough.  In either case, we see that $r_2\in \rho^{-2-\nu}\CI_{\Qb}(W_{\epsilon,k})$ for some  $\nu>0$.  Thus, it suffices to take
$v= 2(v_1+v_2)$.

If instead $k=3$ and $n\ge 9$, we can instead take $\delta-2= -2\mu\frac{n(k-1)}{k(n-1)}$ and $\tau=4-2\mu$ to find $v_2\in \rho^{\delta}w^{\tau}\CI_{\Qb}(W_{\epsilon,k})$ such that 
$\omega_2 = \omega_1+\sqrt{-1}\pa\db v_2$ is positive definite with Ricci potential $r_2$ such that
$$
 r_2:=\log \left( \frac{\omega_1^n}{c_n\Omega_{\epsilon}^n\wedge \overline{\Omega}_{\epsilon}^n} \right) \in x_1^{4} x_2^{4\mu}\CI_{\Qb}(W_{\epsilon,k}) \subset  x^{4}\CI_{\Qb}( W_{\epsilon,k})= \rho^{-4\frac{n(k-1)}{k(n-1)}}\CI_{\Qb}(W_{\epsilon,k}),
 $$
 so thanks to \eqref{ineq.1}, we can take $v= 2(v_1+v_2)$ again.  
\end{proof}

The Ricci potential of the Kähler metric in Lemma~\ref{cy.1} decays fast enough to apply the result of Tian-Yau \cite{Tian-Yau1991} or its parabolic version \cite{Chau-Tam2011}.  

\begin{theorem}
Assume that $n=3$ and $k=2$ or that $n\ge 4$ and $3\le k\le n-1$.  
Then there exists a unique solution  $\widetilde{v}\in \rho^{2-\mu\frac{n(k-1)}{k(n-1)}}w^{3-\mu}\CI_{\Qb}(W_{\epsilon,k})$ to the complex Monge-Ampère equation
$$
  \log\left(  \frac{ (\omega_{\epsilon,k}+ \sqrt{-1}\pa\db \widetilde{v})^n }{c_n\Omega_{\epsilon}^n\wedge \Omega_{\epsilon}^n} \right)= -r_{\epsilon,k}.
$$
In particular, $\tomega:= \omega_{\epsilon,k}+ \sqrt{-1}\pa\db \widetilde{v}$ is the Kähler form of a Calabi-Yau warped $\QAC$-metric on $W_{\epsilon,k}$.  
\label{cy.3}\end{theorem}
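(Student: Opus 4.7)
The plan is to first replace $\omega_{\epsilon,k}$ with the improved K\"ahler form $\omega_1 := \omega_{\epsilon,k} + \sqrt{-1}\pa\db v$ constructed in Lemma~\ref{cy.1}, whose Ricci potential $r_v$ lies in $\rho^{-2-\nu}\CI_{\Qb}(W_{\epsilon,k})$ for some $\nu>0$. Writing $\widetilde{v} = v + u$, the Monge-Amp\`ere equation to be solved becomes
\begin{equation*}
(\omega_1 + \sqrt{-1}\pa\db u)^n = e^{-r_v}\omega_1^n.
\end{equation*}
Bounded geometry of $\omega_1$ (Proposition~\ref{gb.12}), the Sobolev inequality (Corollary~\ref{mp.34}) and the decay $r_v = O(\rho^{-2-\nu})$ are precisely the inputs needed to apply the Tian-Yau existence theorem \cite{Tian-Yau1991}, or its parabolic version \cite{Chau-Tam2011} as suggested in the paragraph preceding the statement, to produce a bounded solution $u$ of the above equation with $\sqrt{-1}\pa\db u$ controlled in a suitable warped $\QAC$-H\"older sense.

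The second step is to upgrade this bounded solution to one in the claimed weighted space. I would rewrite the equation in the linearized form
\begin{equation*}
\Delta_{\omega_1} u = 2 r_v + N(\sqrt{-1}\pa\db u),
\end{equation*}
where $N$ gathers the quadratic and higher-order terms appearing in the expansion of $(\omega_1+\sqrt{-1}\pa\db u)^n/\omega_1^n$, analogous to \eqref{dev.1}. Starting from the bounded $u$, I would iteratively apply the inverse of $\Delta_{\omega_1}$ provided by Corollary~\ref{mp.41}: each application improves the decay since $r_v$ already lies in the target weighted class, while the nonlinear term $N(\sqrt{-1}\pa\db u)$ has twice the decay of $\sqrt{-1}\pa\db u$ and hence feeds back as a strictly better source. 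The weights $\delta = 2-\mu\tfrac{n(k-1)}{k(n-1)}$ and $\tau = 3-\mu$ of the target space lie inside the admissible window $2-2n<\delta<0$, $4-2n<\tau<0$ of Corollary~\ref{mp.41} under the hypotheses on $k$ and $n$, so the bootstrap converges to $u \in \rho^{2-\mu\frac{n(k-1)}{k(n-1)}} w^{3-\mu} \CI_{\Qb}(W_{\epsilon,k})$. Polyhomogeneity of $\widetilde{v} = v+u$ can then be extracted from the polyhomogeneity of the coefficients together with the $\Qb$-parametrix of $\Delta_{\omega_1}$, as in \cite[\S~5]{CMR2015}.

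The hard part will be the first step: verifying that the Tian-Yau/Hein machinery indeed applies in the warped $\QAC$ setting. Bounded geometry and the Sobolev inequality are in hand, but one must transfer estimates between the warped $\QAC$ scale---on which complex Monge-Amp\`ere $C^{2,\alpha}$ Schauder theory is natural---and the $\Qb$-scale on which Corollary~\ref{mp.41} operates. Lemma~\ref{inc.1} is designed for precisely this passage, converting $\chi^\delta \cC^{0,1}_w$-control into $\cC^{0,\alpha}_{\Qb}$-control, and its careful use will be essential to close the iteration. Uniqueness of the decaying solution follows by subtracting two solutions, linearizing at any point along the convex combination, and invoking the injectivity part of Corollary~\ref{mp.41} for the resulting elliptic operator, which is a small perturbation of $\Delta_{\omega_1}$ in the relevant weighted spaces.
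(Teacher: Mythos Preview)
Your overall architecture---replace $\omega_{\epsilon,k}$ by the improved metric of Lemma~\ref{cy.1}, produce a bounded solution via Tian--Yau/Chau--Tam, then bootstrap into the weighted space using Corollary~\ref{mp.41} and Lemma~\ref{inc.1}---is essentially the paper's strategy. The paper, however, runs the continuity method explicitly rather than invoking Tian--Yau as a black box: openness is Corollary~\ref{mp.41}, closedness is Moser iteration (using Corollary~\ref{mp.34}) plus Yau's $C^2$ estimate plus Evans--Krylov, all in the warped $\QAC$ scale. Either route yields a solution $u_\tau\in\CI_w(W_{\epsilon,k})$ that is a priori only bounded.

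The genuine gap in your proposal is the first step of the bootstrap. You write $\Delta_{\omega_1}u = 2r_v + N(\sqrt{-1}\pa\db u)$ and say you will ``iteratively apply the inverse of $\Delta_{\omega_1}$ provided by Corollary~\ref{mp.41}'', relying on $N(\sqrt{-1}\pa\db u)$ having ``twice the decay of $\sqrt{-1}\pa\db u$''. But at the outset $u$ is merely bounded, so $\sqrt{-1}\pa\db u$ has \emph{no} decay and neither does $N(\sqrt{-1}\pa\db u)$; the right-hand side therefore does not lie in any $\rho^{\delta-2}w^{\tau-2}\cC^{\ell,\alpha}_{\Qb}$ with $\delta<0$, which is what Corollary~\ref{mp.41} requires. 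Moreover, even if the right-hand side did decay, Corollary~\ref{mp.41} is an isomorphism statement between weighted spaces: it produces \emph{some} decaying preimage, but does not tell you that your particular bounded $u$ coincides with it unless you already know $u$ lies in the domain space. So the iteration cannot start.

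The paper closes this gap with a separate analytic input you omit: a weighted Moser iteration in the style of \cite[\S8.6.2]{Joyce}, which converts the bounded solution into one lying in $\rho^{-\nu_1}\cC^0_w(W_{\epsilon,k})$ for some $0<\nu_1<\nu$. Only then do the Schauder estimates (first in the warped $\QAC$ scale via Proposition~\ref{gb.12}, then transferred to the $\Qb$ scale via Lemma~\ref{inc.1} exactly as you anticipate) place $u$ in $\rho^{-\nu_1}\CI_{\Qb}$, at which point a single application of Corollary~\ref{mp.41} lands $u$ in the stated space. Your remark about polyhomogeneity is unnecessary: the theorem claims only membership in $\CI_{\Qb}$, not a full expansion.
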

\begin{proof}
The proof is similar to \cite[Theorem~5.4]{CDR2016}, so we will go over the argument putting emphasis on the new features. The strategy is to apply the continuity method to the complex Monge-Ampère equation
\begin{equation}
\log\left(  \frac{ \left(\frac{\sqrt{-1}}{2}\pa\db(u_{\epsilon,k}+v +u)\right)^n }{c_n\Omega_{\epsilon}^n\wedge \Omega_{\epsilon}^n} \right)= -tr_v
\label{cy.5}\end{equation}
for $t\in [0,1]$, where $v$ is as in Lemma~\ref{cy.1}.  More precisely, we will show that the set 
$$
    \mathcal{S}=\{ s\in [0,1] \; | \; \exists u_s \in \rho^{2-\mu\frac{n(k-1)}{k(n-1)}}w^{3-\mu}\CI_{\Qb}(W_{\epsilon,k}) \; \mbox{a solution to \eqref{cy.5} for} \; t=s\}
$$
is in fact all of $[0,1]$ by showing that it is non-empty, open and closed.  Clearly, $u_0=0$ is a solution to \eqref{cy.5} for $t=0$, so that $\mathcal{S}$ is non-empty.  The openness of $\mathcal{S}$ follows from Corollary~\ref{mp.41}.  For closedness, suppose that $[0,\tau)\subset \mathcal{S}$ for some $0<\tau\le 1$.  Then we need to show that \eqref{cy.5} has a solution for $t=\tau$.  This can be done by deriving good a priori estimates on the solutions to \eqref{cy.5}.  First, thanks to Corollary~\ref{mp.34}, the Sobolev inequality holds for warped $\QAC$-metrics on $W_{\epsilon,k}$, so we can apply a Moser iteration to derive an a priori $\cC^0$-bound on solutions of \eqref{cy.5}.  The argument of Yau then gives uniform bounds on $\sqrt{-1}\pa\db v_t$, which by the result of Evans-Krylov, yields an a priori $\cC^{2,\gamma}_{w}(W_{\epsilon,k})$ bound on solutions.  Hence, taking an increasing sequence $t_i\nearrow \tau$, we can use the Arzela-Ascoli theorem to extract a subsequence of $\{u_{t_i}\}$ converging in $\cC^2_{w}(W_{\epsilon,k})$ to some solution $u_\tau$ of \eqref{cy.5} for $t=\tau$.  Bootstrapping, we see that in fact $u_{\tau}\in \CI_w(W_{\epsilon,k})$.

To see that this solution is in fact in $\rho^{2-\mu\frac{n(k-1)}{k(n-1)}}w^{4-\mu-s}\CI_{\Qb}(W_{\epsilon,k})$, we can first apply a Moser iteration with weights as in \cite[\S8.6.2]{Joyce} to obtain an a priori bound in $\rho^{-\nu_1}\cC^0_{w}(W_{\epsilon,k})$ for some $0<\nu_1<\nu$ with $\nu$ as in the statement of Lemma~\ref{cy.1}.  Appealing to Proposition~\ref{gb.12}, we use the Schauder estimate  in terms of warped $\QAC$-metrics to bootstrap and obtain that in fact $u_{\tau}\in \rho^{-\nu_1}\CI_w(W_{\epsilon,k})$.  Now, by Lemma~\ref{inc.1}, we see that 
$$
      \rho^{-\nu_1}\cC^1_w(W_{\epsilon,k})= (\chi w)^{\nu_1}\cC^1_w(W_{\epsilon,k})\subset \chi^{\nu_1}\cC^1_w(W_{\epsilon,k})\subset \cC^{0,\nu_1}_{\Qb}(W_{\epsilon,k}),
$$
which implies that $\| \pa\db u_{\tau}\|_{g_{\epsilon,k}}\in \cC^{0,\nu_1}_{\Qb}(W_{\epsilon,k})$.  Hence, thanks to Lemma~\ref{qb.6}, we can use the Schauder estimate in terms of $\Qb$-metrics to bootstrap and obtain that in fact
$u_\tau\in \rho^{-\nu_1}\CI_{\Qb}(W_{\epsilon,k})$.  Finally, using  Corollary~\ref{mp.41}, we conclude that $u_\tau$ is a solution in $\rho^{2-\mu\frac{n(k-1)}{k(n-1)}}w^{4-\mu-s}\CI_{\Qb}(W_{\epsilon,k})$, showing that $\mathcal{S}$ is closed, and hence is all of $[0,1]$.  

To show that the solution is unique, we can proceed as in \cite[Proposition~{7.13}]{Aubin}, but using Corollary~\ref{mp.41} instead of the maximum principle.

\end{proof}

\begin{corollary}
Assume that $n=3$ and $k=2$ or that $n\ge 4$ and $3\le k\le n-1$.   Then there exists  a complete Calabi-Yau metric $g$ on $\bbC^n$ with Euclidean volume growth whose tangent cone at infinity is $(W_{0,k}, g_{W_{0,k}})=(V_{0,k}\times \bbC,  g_{V_{0,k}}\times g_{\bbC})$.  In particular, $g$ is not isometric to the Euclidean metric on $\bbC^n$.  
\label{cy.4}\end{corollary}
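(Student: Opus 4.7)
The plan is to obtain the corollary directly from Theorem~\ref{cy.3} by reinterpreting the Calabi-Yau warped $\QAC$-metric $\tg$ on the biholomorphically equivalent complex manifold $\bbC^n$, then computing its volume growth, and finally identifying its tangent cone at infinity.  First I would invoke Theorem~\ref{cy.3} to obtain the Calabi-Yau K\"ahler form $\tomega = \omega_{\epsilon,k}+\sqrt{-1}\pa\db\widetilde{v}$ on $W_{\epsilon,k}$, and use the observation in \S~\ref{w.0} that $W_{\epsilon,k}$ is the graph of the holomorphic function $z\mapsto \epsilon^{-1}P_k(z_1,\ldots,z_n)$, hence biholomorphic to $\bbC^n$.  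Completeness of the corresponding Riemannian metric $\tg$ is guaranteed by Proposition~\ref{gb.12}.

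For the volume growth, I would apply Corollary~\ref{mp.16} with $a=b=0$ to the warped $\QAC$-metric $\tg$ to conclude that $\vol_{\tg}(B(0,R))\asymp R^{2n}$ for $R>1$, which is Euclidean volume growth.

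The main step is to identify the tangent cone at infinity of $(\bbC^n, \tg)$ with the product cone $(V_{0,k}\times\bbC,\, g_{V_{0,k}}\times g_{\bbC})$.  I would combine two asymptotic descriptions of $\tomega$: near $\cH_2$, Corollary~\ref{w.31} gives $\psi_{\epsilon}^{*}\omega_{\epsilon,k}-\omega_{W_{0,k}}\in x_2^{\mu}\cA_{\phg}$, and the additional correction $\sqrt{-1}\pa\db\widetilde{v}$ coming from Theorem~\ref{cy.3} contributes decay of even higher order there; near $\cH_1$, the warped product structure of $\omega_{\epsilon,k}$ recorded in~\eqref{wa.1} together with Theorem~\ref{ac.11} shows that $\tomega$ is asymptotically the pullback of $\omega_{W_{0,k}}$ along the bundle map $\cH_1\to S_1$ of~\eqref{w.13c}.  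Rescaling under the weighted $\bbC^{*}$-action~\eqref{w.4} and passing to the pointed Gromov-Hausdorff limit then yields $(V_{0,k}\times\bbC,\, g_{V_{0,k}}\times g_{\bbC})$ as the tangent cone at infinity of $(\bbC^n,\tg)$.

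Finally, since this tangent cone is singular along $\{0\}\times\bbC$, whereas the tangent cone at infinity of the Euclidean metric on $\bbC^n$ is $\bbC^n$ itself, with smooth cross-section $S^{2n-1}$, the metric $\tg$ cannot be isometric to the Euclidean metric: any such isometry would have to send one tangent cone to the other, which is impossible.  The main obstacle I foresee is the simultaneous control of the asymptotics of $\tomega$ at both boundary hypersurfaces $\cH_1$ and $\cH_2$, and the verification that the weighted rescaling limit produces the advertised product of cones rather than some other cone or a non-unique tangent cone.
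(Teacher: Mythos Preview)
Your proposal is correct and follows the same approach as the paper, which simply says ``It suffices to take the Calabi-Yau warped $\QAC$-metric of Theorem~\ref{cy.3} on $W_{\epsilon,k}\cong \bbC^n$.'' You have carefully unpacked the ingredients (completeness from Proposition~\ref{gb.12}, Euclidean volume growth from Corollary~\ref{mp.16} with $a=b=0$, the tangent cone from the asymptotic structure built into the definition of a warped $\QAC$-metric, and the non-isometry from the singularity of the cross-section) that the paper leaves implicit in its one-line proof.
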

\begin{proof}
It suffices to take the Calabi-Yau warped $\QAC$-metric of Theorem~\ref{cy.3} on $W_{\epsilon,k}\cong \bbC^n$.  
\end{proof}

\section{The case $k=2$ and $n\ge 4$} \label{k2.0}

When $k=2$ and $n\ge 4$, the decay of the Ricci potential is not sufficient to apply Corollary~5.19.  However, inspired by \cite[Proposition~12]{Szekelyhidi}, by a more direct means, we can first improve the decay of the Ricci potential at $\cH_2$.  Indeed, near that face, the metric associated to the potential $u_{\epsilon,k}$ of Corollary~\ref{w.31} has the same asymptotic behavior as the singular metric $g_{W_{0,k}}$.  This latter metric can be thought of as an $\AC$-metric,
\begin{equation}
   g_{W_{0,k}}= dR^2 + R^2 g_{2},  \quad \mbox{with } R= \sqrt{|z_{n+1}|^2+ \| z \|^{\frac{2(n-k)}{n-1}}_{N^*_{F_k}} } \asymp \rho,
\label{k2.1}\end{equation}
where 
\begin{equation}
   g_2= d\varphi^2+ (\sin\varphi)^2 g_{\pa\bV_{0,k}}+ (\cos \varphi)^2 g_{S_1}, \quad \varphi\in [0,\frac{\pi}2],
\label{k2.2}\end{equation}
is the spherical suspension of $g_{S_1}$, the standard metric on the unit circle, and $g_{\pa\bV_{0,k}}$, the metric on the cross-section $\pa\bV_{0,k}$ of the cone metric
$$
     g_{V_{0,k}}= dr^2+ r^2g_{\pa \bV_{0,k}}.
$$
There is in particular an incomplete edge singularity (or wedge singularity in the terminology of \cite{GKM}) at $\varphi=0$ corresponding to the boundary of $\cH_2\cap \cW_{\epsilon,k}$.  At $\varphi=\frac{\pi}2$, the metric appears singular, but is in fact smooth since $d\varphi^2+ (\cos\varphi)^2g_{S_1}$ is just the standard metric on $\bbS^2$ written in spherical coordinates.  Notice in particular that near $\cH_1\cap \cH_2\cap\cW_{\epsilon,k}$, we can if we wish use $\varphi$ instead of $x_1$ as a boundary defining function of $\pa \cH_2\cap\cW_{\epsilon,k}$.  Forgetting for the moment the singularities of the incomplete edge metric $g_2$, we can thus regard $g_{W_{0,k}}$ as an $\AC$-metric and a simple computation shows that its Laplacian is given by
\begin{equation}
\Delta_{g_{W_{0,k}}}=  R^{-2}\left( -\left( R\frac{\pa}{\pa R} \right)^2- (2n-2)R\frac{\pa}{\pa R} + \Delta_{g_2} \right),
\label{k2.3}\end{equation}
where $\Delta_{g_2}$ is the Laplacian of the metric $g_2$ on $\cH_2\cap \cW_{\epsilon,k}$.  Making the change of variable $X= R^{-1}$, this can be rewritten as
$$
      \Delta_{g_{W_{0,k}}}= X^2 B
$$
where 
\begin{equation}
B:= X^{-2}\Delta_{g_{W_{0,k}}}= -\left( X\frac{\pa}{\pa X} \right) + (2n-2)X\frac{\pa}{\pa X} + \Delta_{g_2}
\label{k2.4}\end{equation}
is a $b$-operator in the sense of Melrose \cite{MelroseAPS}, except for the fact that $g_2$ has an incomplete edge  singularity.  The indicial family of $B$, as defined in \cite[Definition~2.18]{MazzeoEdge}, is thus
\begin{equation}
I(B,\lambda)= -\lambda^2+ (2n-2)\lambda+ \Delta_{g_2},  \quad \lambda\in \bbC.  
\label{k2.5}\end{equation}
Since the Ricci potential \eqref{w.34} has a polyhomogeneous expansion at $\cH_2$, this suggests we can use the strategy of \cite[Lemma~5.44]{MelroseAPS} to eliminate part of the asymptotic expansion of $r_{\epsilon,k}$ at $\cH_2$ and construct a Kähler warped $\QAC$-metric having Ricci potential decaying faster at $\cH_2$.  However, since $\Delta_{g_2}$ is the Laplacian of an incomplete edge metric, we need to use the results of Appendix~\ref{ap.0}  about the mapping properties of $\Delta_{g_2}$.
  \begin{lemma}
For $k=2$ and $n\ge 4$, there exists $v\in \rho^2\cA_{\phg}(\cW_{\epsilon,k})$ contained in   $x_1^{-\frac{1}{n-2}}\rho^{2-\frac{n}{n-2}+\delta}\cA_{\phg}(\cW_{\epsilon,k})$ for all $\delta>0$ such that $\frac{\sqrt{-1}}{2}\pa\db (u_{\epsilon,k}+v)$ is the Kähler form of a warped $\QAC$-metric with Ricci potential
$$
      r_v:= \log \left( \frac{\left( \frac{\sqrt{-1}}2\pa\db (u_{\epsilon,k}+v) \right)^n}{c_n\Omega_{\epsilon}^n \wedge \overline{\Omega}_{\epsilon}^n} \right)\in x_1^{\nu} x_2^{\frac{4(n-1)}{n}+\nu}\cA_{\phg}(M)
$$
for some $\nu>0$. 
\label{k2.6}\end{lemma}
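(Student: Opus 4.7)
The plan is to eliminate the leading asymptotic terms of $r_{\epsilon,k}$ at $\cH_2$ by solving indicial equations for the model Laplacian, using the mapping properties of the incomplete edge operator $\Delta_{g_2}$ from Appendix~A. Since $\omega_{\epsilon,k} = \frac{\sqrt{-1}}{2}\pa\db u_{\epsilon,k}$ is asymptotic to the singular AC-model $g_{W_{0,k}}$ near $\cH_2$ and $\Delta_{g_{W_{0,k}}} = X^2 B$ with indicial family $I(B,\lambda)$ given by \eqref{k2.5}, we will expand $r_{\epsilon,k}$ polyhomogeneously at $\cH_2$ and successively strip off its leading terms via indicial solvability on $\cH_2 \cap \cW_{\epsilon,k}$.

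Concretely, using $x_2 \asymp X^{(n-1)/n}$ near $\cH_2$ away from $\cH_1$ and $\mu = \frac{2(n-1)}{n-2}$ for $k=2$, write the leading term of $r_{\epsilon,k}$ at $\cH_2$ as $X^{\lambda_0} g_0$ with $\lambda_0 := \frac{n\mu}{n-1} = \frac{2n}{n-2}$ and $g_0$ polyhomogeneous on $\cH_2 \cap \cW_{\epsilon,k}$, vanishing at the edge $\cH_1\cap\cH_2$ at rate $x_1$. A correction of the form $v_0 := \chi\, X^{\lambda_0 - 2} f_0$, with $\chi$ a cutoff supported near $\cH_2$, then satisfies
\begin{equation*}
\Delta_{\omega_{\epsilon,k}} v_0 = X^{\lambda_0}\, I(B, \lambda_0-2) f_0 + O(X^{\lambda_0+\eta})
\end{equation*}
for some $\eta>0$, so matching against $-2 r_{\epsilon,k}$ reduces the construction to solving the indicial equation
\begin{equation*}
\left(\Delta_{g_2} + c_n\right) f_0 = -2g_0, \qquad c_n := (2n-2)(\lambda_0 - 2) - (\lambda_0-2)^2 = \frac{8n(n-3)}{(n-2)^2}.
\end{equation*}
Because $c_n > 0$ for $n \ge 4$, the operator $\Delta_{g_2}+c_n$ is strictly positive, and the mapping theorem of Appendix~A inverts it on the relevant weighted edge Hölder spaces on the compact manifold-with-boundary $\cH_2 \cap \cW_{\epsilon,k}$. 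This yields a polyhomogeneous $f_0$ whose leading expansion at the edge is fixed by the edge indicial roots of $\Delta_{g_2}$, and this expansion accounts for the $x_1^{-1/(n-2)}$ blow-up stated in the lemma.

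We then iterate the procedure for each successive term $x_2^{\mu+\alpha_j}g_j$ in the polyhomogeneous expansion of $r_{\epsilon,k}$ with $\mu+\alpha_j \le \frac{4(n-1)}{n}$; there are finitely many such terms, and at each stage the relevant indicial operator $\Delta_{g_2}+(2n-2)\beta_j-\beta_j^2$ is isomorphic on edge Hölder spaces by Appendix~A, with logarithmic corrections introduced should $\beta_j$ happen to coincide with an indicial root. Summing the contributions yields $v$ in the advertised class; positive-definiteness of $\omega_{\epsilon,k}+\frac{\sqrt{-1}}{2}\pa\db v$ and the warped $\QAC$-structure are preserved because $\pa\db v$ is subleading with respect to $\omega_{\epsilon,k}$ by construction. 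The slight loss of $x_1$-decay in $r_v$, from $x_1$ down to $x_1^\nu$, reflects the edge singular behavior of the $f_j$.

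The main obstacle is implementing the inversion of $\Delta_{g_2}+c_n$ on the correct weighted edge Hölder spaces—securing Fredholm solvability while tracking the polyhomogeneous edge asymptotics compatible with the global warped $\QAC$-structure—which is exactly the content of Appendix~A. A secondary technical issue is to verify that the discrepancy between $\Delta_{\omega_{\epsilon,k}}$ and its model $\Delta_{g_{W_{0,k}}}$ contributes only higher-order errors, so that the cancellation obtained at the indicial level translates into a genuine improvement of the full Ricci potential.
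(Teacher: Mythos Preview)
Your overall strategy is exactly the paper's: pass to the $\AC$-model $\Delta_{g_{W_{0,k}}}=X^2B$, expand $r_{\epsilon,k}$ polyhomogeneously at $\cH_2$, and kill leading terms by inverting the indicial family $I(B,\lambda)=\Delta_{g_2}-\lambda^2+(2n-2)\lambda$ via the edge mapping properties of Appendix~A, with logarithmic corrections when needed. So the architecture is right.

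However, several of your numerical identifications are off, and one of them leads to a genuine gap. First, for $k=2$ one has $\rho=x^{-2(n-1)/n}$, hence near $\cH_2$ (where $x\asymp x_2$) one gets $x_2\asymp X^{\,n/(2(n-1))}$, not $X^{(n-1)/n}$. Consequently the leading $X$-exponent of $r_{\epsilon,k}\in x_1x_2^{\mu}\cA_{\phg}$ is $\lambda_0=\frac{n}{n-2}$, not $\frac{2n}{n-2}$; the indicial parameter is $\lambda_0-2=\frac{4-n}{n-2}\le 0$. Second, the coefficient $g_0$ does \emph{not} vanish at the edge: since $x_1x_2^{\mu}=x_1^{1-\mu}x^{\mu}$ and $1-\mu=-\frac{n}{n-2}$, the leading coefficient lies in $x_1^{-n/(n-2)}\cA_{\phg}(\cH_2\cap\cW_{\epsilon,k})$, i.e.\ it blows up at $\cH_1\cap\cH_2$. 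This is why the paper must choose the edge weight $a=-\frac{1}{n-2}$ in Corollary~\ref{ap.10} and carefully track the $x_1^{-1/(n-2)}$ behavior of the solution.

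The more serious consequence of the wrong $\lambda_0$ is your claim that $\Delta_{g_2}+c_n$ is strictly positive. With the correct $\lambda_0-2=\frac{4-n}{n-2}$ one computes $c_n=(2n-2)(\lambda_0-2)-(\lambda_0-2)^2\le 0$ for $n\ge 4$ (equal to $0$ when $n=4$), so the operator is only Fredholm, not invertible, and the right-hand side need not be in its range. The paper handles this explicitly: if $e_{n/(n-2)}$ is not in the image of $I(B,\lambda_0-2)$, one adds a term $f_{(n/(n-2),1)}\log\rho$ with $f_{(n/(n-2),1)}$ in the kernel, exactly as in \cite[Lemma~5.44]{MelroseAPS}. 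Your sketch alludes to ``logarithmic corrections introduced should $\beta_j$ happen to coincide with an indicial root,'' but with your (incorrect) positivity claim this mechanism never triggers at the leading step, whereas in reality it is already needed there.
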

 
\begin{proof}
Recall first that with our assumption, $\mu= \frac{2(n-1)}{n-2}$ and $\rho=x^{-\frac{2(n-1)}{n}}$.
The idea is to take advantage of the fact that $r_{\epsilon,k}\in x_1x_2^{\mu}\cA_{\phg}(\cW_{\epsilon,k})$ has a polyhomogeneous expansion at $\cH_2\cap \cW_{\epsilon,k}$ and follow the strategy of \cite[Lemma~5.44]{MelroseAPS}.  Since 
$$
  x_1 x_2^{\mu}= x_1^{1-\mu} x^{\mu}= x_1^{1-\mu} \rho^{-\frac{n}{n-2}} \asymp x_1^{1-\mu} X^{\frac{n}{n-2}},
$$
we see that the top order term in the asymptotic expansion of $r_{\epsilon,k}$ is of the form $e_{\frac{n}{n-2}}X^{\frac{n}{n-2}}$ with
$e_{\frac{n}{n-2}}\in x_1^{1-\mu}\cA_{\phg}(\cH_2\cap\cW_{\epsilon,k})=x_1^{-1-\frac{2}{n-2}}\cA_{\phg}(\cH_2\cap\cW_{\epsilon,k})$.  To eliminate it, we need to find $f_{\frac{n}{n-2}}$ solving the equation
\begin{equation}
   I(B,\lambda)f_{\lambda}= e_{\frac{n}{n-2}} \quad \mbox{for} \quad  \lambda= -2+ \frac{n}{n-2}= -1+ \frac{2}{n-2}.
\label{k2.7}\end{equation}
Since $$
x_1^{-1-\frac{2}{n-2}}\cA_{\phg}(\cH_2\cap \cW_{\epsilon,k})\subset x_1^{-1-\frac{2}{n-2}}\cC_e^{\infty}(\cH_2\cap \cW_{\epsilon,k})\subset x_1^{-2-\frac{1}{n-2}}\cC_e^{\infty}(\cH_2\cap \cW_{\epsilon,k}),
$$
where $\CI_e(\cH_2\cap \cW_{\epsilon,k})= \cC^{\infty}_{x_1^{-2}g_2}(cH_2\cap \cW_{\epsilon,k})$ for the edge metric $x_1^{-2}g_2$, this suggests to take $a=-\frac{1}{n-2}$ in Corollary~\ref{ap.10}.  Suppose first that $e_{\frac{n}{n-2}}$ is in the image $I(B, -1+\frac{2}{n-2})$. Then there exists $f_{\frac{n}{n-2}}\in x_1^{-\frac{1}{n-2}}\cC^{2,\alpha}_2(\cH_2\cap \cW_{\epsilon,k})$ such that 
$$
      I(B,-1+\frac{2}{n-2})f_{\frac{n}{n-2}}= 2e_{\frac{n}{n-2}}.
$$
Moreover, by Corollary~\ref{ap.9}, we know that in fact $f_{\frac{n}{n-2}}\in x_1^{-\frac{1}{n-2}}\cA_{\phg}(\cH_2\cap \cW_{\epsilon,k})$. Hence extending $f_{\frac{n}{n-2}}$ smoothly away from $\cH_2\cap \cW_{\epsilon,k}$, we see that 
\begin{equation}
      \left(\Delta_{g_{W_{\epsilon,k}}}\left( \rho^{2- \frac{n}{n-2}}f_{\frac{n}{n-2}}\right)-  2r_{\epsilon,k}\right) \in  x_1^{-2-\frac{1}{n-2}}\rho^{-\frac{n}{n-2}-\delta}\cA_{\phg}(\cH_2\cap \cW_{\epsilon,k})= x_1^{\frac{2(n-1)\delta}{n}+\frac{1}{n-2}} x_{2}^{2\frac{n-1}n \left( \frac{n}{n-2}+\delta \right)} \cA_{\phg}(\cH_2\cap \cW_{\epsilon,k})
\label{k2.8}\end{equation}
for some $\delta>0$ depending on the polyhomogeneous expansion of $g_{W_{\epsilon,k}}$ and  $r_{\epsilon,k}$ at $\cH_2$.  If instead $e_{\frac{n}{n-2}}$ is not in the image of 
$I(B,-1+ \frac{2}{n-2})$, then proceeding as in the proof of \cite[Lemma~5.44]{MelroseAPS}, we can find $f_{\frac{n}{n-2}}, f_{(\frac{n}{n-2},1)}\in x_1^{-\frac{1}{n-2}}\cA_{\phg}(\cH_2\cap \cW_{\epsilon,k})$  with $f_{(\frac{n}{n-2},1)}$ in the kernel of $I(B,-1+ \frac{2}{n-2})$ such that 
\begin{equation}
 \left(\Delta_{g_{W_{\epsilon,k}}}\left( \rho^{2- \frac{n}{n-2}}\left(  f_{\frac{n}{n-2}} + f_{(\frac{n}{n-2},1)}\log \rho\right)  \right)-  2r_{\epsilon,k}\right) \in   x_1^{\frac{2(n-1)\delta}{n}+\frac{1}{n-2}} x_{2}^{2\frac{n-1}n \left( \frac{n}{n-2}+\delta \right)} \cA_{\phg}(\cH_2\cap \cW_{\epsilon,k})
 \label{k2.9}\end{equation}
for $\delta>0$ as above.
Since 
$$
    \frac{\rho^{2- \frac{n}{n-2}}f_{\frac{n}{n-2}}}{\rho^2}= \rho^{-\frac{n}{n-2}}f_{\frac{n}{n-2}} \in x_1^{2+\frac{1}{n-2}}x_2^{2\frac{n-1}{n-2}}\cA_{\phg}(\cH_2\cap \cW_{\epsilon,k})
$$
and $u_{\epsilon,k}\asymp \rho^2$ at infinity, we see also that, if necessary, we can change the definition of $f_{\frac{n}{n-2}}$ and $f_{(\frac{n}{n-2},1)}$ by truncating them using cut-off functions near $\cH_2$ in such a way that 
\begin{equation}
       \frac{\sqrt{-1}}{2} \pa\db \left( u_{\epsilon,k}+ \rho^{2-\frac{n}{n-2}}\left(f_{\frac{n}{n-2}} + f_{(\frac{n}{n-2},1)}\log \rho \right) \right)
\label{k2.10}\end{equation}
is still the Kähler form of a warped $\QAC$-metric.  Assuming without loss of generality that $\frac{2(n-1)\delta}{n}\le \frac{1}{n-2}$, we see by \eqref{k2.8} and  the expansion \eqref{dev.1} that  the Ricci potential $r_{\frac{n}{n-2}}$ of this new metric is such that
$$
       r_{\frac{n}{n-2}}\in x_1^{\frac{2(n-1)\delta}{n}+\frac{1}{n-2}} x_{2}^{\mu+2\frac{(n-1)\delta}n} \cA_{\phg}(\cH_2\cap \cW_{\epsilon,k}) = x_1^{-2-\frac{1}{n-2}}\rho^{-\frac{n}{n-2}-\delta}\cA_{\phg}(\cH_2\cap \cW_{\epsilon,k}).
$$
Looking at the top order term of the expansion of $r_{\frac{n}{n-2}}$ at $\cH_2\cap \cW_{\epsilon,k}$, we can again eliminate it using the same technique.  Indeed, if it is of the form 
$e_{\frac{n}{n-2}+\delta} X^{\frac{n}{n-2}+\delta}$ with $e_{\frac{n}{n-2}+\delta}  \in x_1^{-2-\frac{1}{n-2}}\cA_{\phg}(\cH_2\cap\cW_{\epsilon,k})$, then we can apply Corollary~\ref{ap.10} with $a= \frac{-1}{n-2}$ to remove this term by adding another correction to the Kähler metric \eqref{k2.10}, resulting in a Kähler warped $\QAC$-metric with Ricci potential in $x_1^{\frac{2(n-1)\delta'}{n}+\frac{1}{n-2}} x_{2}^{\mu+2\frac{(n-1)\delta'}n} \cA_{\phg}(\cH_2\cap \cW_{\epsilon,k})$ for some $\delta'>\delta$.  

Notice however that the top order term of $r_{\frac{n}{n-2}}$ at $\cH_2 \cap \cW_{\epsilon,k}$ may involve $(\log x_2)^{\ell}$ for some $\ell\in\bbN$, and similarly the top order term at the corner $\cH_1\cap \cH_2\cap \cW_{\epsilon,k}$ could involve some power of $\log x_1$ as well.  If $(\log x_2)^{\ell}$ does appear, then the top order term is of the form
$e_{\frac{n}{n-2}+\eta} X^{\frac{n}{n-2}+\eta}(\log X)^{\ell}$ with  $\eta>\delta$ and $e_{\frac{n}{n-2}+\eta}$ is polyhomogeneous on $\cH_2\cap \cW_{\epsilon,k}$ with
top order term at $\pa \cH_2\cap \cW_{\epsilon,k}$ of the form $b x_1^{-2-\frac{1}{n-2}-\eta'}(\log x_1)^{\ell'}$ for some $0\le \eta'\le \frac{2(n-1)(\eta-\delta)}{n}$ and $\ell'\ge \ell$, where $b$ is a smooth function on $\pa \cH_2\cap \cW_{\epsilon,k}$.    
Thus, it suffices in this case to apply Corollary~\ref{ap.10} with $a< -\frac{2}{n-2}-\eta'$ and handle the logarithmic terms as in \cite[Lemma~{5.44}]{MelroseAPS}.  

However, since we need $a> 5-2n$ to apply Corollary~\ref{ap.10}, we cannot take $\eta'$ arbitrarily large.  This means this method will not allow us to continue indefinitely and remove all terms in the asymptotic expansion of the Ricci potential.  What is important is that since 
$x_2^{\frac{4(n-1)}{n}}= x_1^{-\frac{4(n-1)}{n}}\rho^2$, we need to apply Corollary~\ref{ap.10} only when $a-2\ge -\frac{4(n-1)}{n}> 3-2n$ to eliminate the asymptotic expansion up to order $2$ in $\rho$.  Thus, we can continue this argument until we reach the desired decay of the Ricci potential.

\end{proof}

With this modification, we can now proceed as before to construct a Calabi-Yau warped $\QAC$-metric.  

\begin{theorem}
Assume that $k=2$ and $n\ge 4$.  
Then for $\delta>0$ arbitrarily small, there exists a solution  $\widetilde{v}\in x_1^{-\frac{1}{n-2}}\rho^{2-\frac{n}{n-2}+\delta}\CI_{\Qb}(W_{\epsilon,k})$ to the complex Monge-Ampère equation
$$
  \log\left(  \frac{ (\omega_{\epsilon,k}+ \sqrt{-1}\pa\db \widetilde{v})^n }{c_n\Omega_{\epsilon}^n\wedge \Omega_{\epsilon}^n} \right)= -r_{\epsilon,k}.
$$
In particular, $\tomega:= \omega_{\epsilon,k}+ \sqrt{-1}\pa\db \widetilde{v}$ is the Kähler form of a Calabi-Yau warped $\QAC$-metric on $W_{\epsilon,k}$.  \label{k2.11}\end{theorem}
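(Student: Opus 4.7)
My approach parallels the proof of Theorem~\ref{cy.3}, using the improved K\"ahler warped $\QAC$-metric from Lemma~\ref{k2.6} as the starting point, which compensates for the slow decay of the original Ricci potential in the case $k=2$, $n \ge 4$. Let $v \in x_1^{-\frac{1}{n-2}}\rho^{2-\frac{n}{n-2}+\delta}\cA_{\phg}(\cW_{\epsilon,k})$ be the potential produced by Lemma~\ref{k2.6}, and set $\omega' := \omega_{\epsilon,k} + \sqrt{-1}\pa\db v$. Its Ricci potential $r_v$ lies in $x_1^{\nu} x_2^{\frac{4(n-1)}{n}+\nu}\cA_{\phg}$ for some $\nu > 0$. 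Converting via $\rho = (x_1 x_2)^{-\frac{2(n-1)}{n}}$ and $w = x_1$ shows that $r_v \in \rho^{\delta_0 - 2} w^{\tau_0 - 2} \cC^{0,\alpha}_{\Qb}$ with $\delta_0 = -\tfrac{n\nu}{2(n-1)}$ and $\tau_0 = -2 + \tfrac{4}{n}$. For $n \ge 4$ and $\nu$ sufficiently small, both weights lie strictly inside the intervals required by Corollary~\ref{mp.41}, so that $\Delta$ is an isomorphism on the corresponding weighted spaces.

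I would then run the continuity method on the family
\begin{equation*}
\log\left(\frac{(\omega' + \sqrt{-1}\pa\db u_t)^n}{(\omega')^n}\right) = -t\, r_v, \quad t \in [0,1],
\end{equation*}
so that $u_0 = 0$ handles $t=0$ and $u_1$ produces $\widetilde{v} := v + u_1$ solving the desired Monge-Amp\`ere equation. Following the scheme of Theorem~\ref{cy.3} verbatim, the set of $t$ admitting a solution in a suitable weighted H\"older space is non-empty, open (by the implicit function theorem together with Corollary~\ref{mp.41}), and closed. Closedness rests on the standard a priori estimates: a $\cC^0$-bound via Moser iteration using the Sobolev inequality of Corollary~\ref{mp.34}; uniform bounds on $\sqrt{-1}\pa\db u_t$ by Yau's argument; and $\cC^{2,\gamma}_w$-bounds from Evans-Krylov. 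A weighted Moser iteration in the spirit of \cite[\S 8.6.2]{Joyce} yields a bound in $\rho^{-\nu_1}\cC^0_w$ for some $0 < \nu_1 < \nu$; Schauder estimates for warped $\QAC$-metrics, combined with Proposition~\ref{gb.12}, promote this to $\rho^{-\nu_1}\CI_w$; Lemma~\ref{inc.1} converts the first derivatives to $\cC^{0,\nu_1}_{\Qb}$-regularity; $\Qb$-Schauder (relying on the bounded geometry of Lemma~\ref{qb.6}) gives $u_1 \in \rho^{-\nu_1}\CI_{\Qb}$; and a final application of Corollary~\ref{mp.41} places $u_1$ in a weighted $\CI_{\Qb}$-space strictly smaller than the one containing $v$. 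Uniqueness follows as in \cite[Proposition~7.13]{Aubin}, with Corollary~\ref{mp.41} replacing the maximum principle.

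Setting $\widetilde{v} := v + u_1$ then gives the desired solution, lying in $x_1^{-\frac{1}{n-2}}\rho^{2-\frac{n}{n-2}+\delta}\CI_{\Qb}(W_{\epsilon,k})$ because $v$ is in that space by Lemma~\ref{k2.6} and $u_1$ lives in a strictly smaller one. The Calabi-Yau property of $\tomega := \omega_{\epsilon,k} + \sqrt{-1}\pa\db \widetilde{v}$ is immediate from the $t = 1$ equation.

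The main obstacle will be the bookkeeping around the mild singularity $x_1^{-\frac{1}{n-2}}$ of $\omega'$ at $\cH_1$ coming from the correction $v$: one must verify that $\omega'$ still has bounded geometry as a warped $\QAC$-metric and that Yau's $C^2$-estimate and Evans-Krylov hold uniformly along the continuity path in spite of this extra power. The other delicate point is to ensure that the weighted Moser iteration produces a decay exponent $\nu_1$ small enough to keep the final bootstrap strictly within the weight range allowed by Corollary~\ref{mp.41}, so that the regularity upgrade closes.
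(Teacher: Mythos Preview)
Your overall scheme is right, but you skip a step that the paper treats as essential. The Ricci potential $r_v$ produced by Lemma~\ref{k2.6} lies in $x_1^{\nu}x_2^{\frac{4(n-1)}{n}+\nu}\cA_{\phg}$, and near $\cH_1$ (where $x_2$ is bounded away from zero) this gives only $r_v = O(x_1^{\nu}) = O(\rho^{-\frac{n\nu}{2(n-1)}})$, which for small $\nu$ is far slower than $\rho^{-2}$. The weighted Moser iteration of \cite[\S 8.6.2]{Joyce} requires the forcing term to decay faster than $\rho^{-2}$ in order to yield any $\rho$-decay for the solution; without that, you cannot conclude $u_1 \in \rho^{-\nu_1}\cC^0_w$, and the whole bootstrap (Lemma~\ref{inc.1}, $\Qb$-Schauder, final application of Corollary~\ref{mp.41}) never gets off the ground.

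The paper closes this gap by first iterating Corollary~\ref{mp.41} \emph{before} launching the continuity method. One application with $\tau-2 = -\frac{4(n-1)}{n}$ and $\delta = -\frac{n\nu}{2(n-1)}$ produces a correction whose new Ricci potential lies in $x_1^{2\nu}x_2^{\frac{8(n-1)}{n}+2\nu}\CI_{\Qb}$, doubling the $x_1$-exponent. Repeating this with $\delta$ replaced successively by $-\frac{2n\nu}{2(n-1)}$, $-\frac{4n\nu}{2(n-1)}$, \ldots\ (always with the same $\tau$) eventually yields a Ricci potential in $x_1^{2^\ell\nu}x_2^{\ldots}\CI_{\Qb}$ with $2^\ell\nu > \frac{4(n-1)}{n}$, hence in $\rho^{-2-\nu'}\CI_{\Qb}$ for some $\nu'>0$. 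Only then does the continuity method of Theorem~\ref{cy.3} apply. This iteration is the analogue of Lemma~\ref{cy.1} for the case $k=2$, $n\ge 4$, carried out inside the proof rather than beforehand.

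A minor remark: your worry about a ``mild singularity $x_1^{-\frac{1}{n-2}}$'' of $\omega'$ is misplaced. Lemma~\ref{k2.6} explicitly asserts that $\frac{\sqrt{-1}}{2}\pa\db(u_{\epsilon,k}+v)$ is the K\"ahler form of a warped $\QAC$-metric; the negative power of $x_1$ appears only in the potential $v$, not in the metric, so bounded geometry, Yau's estimate, and Evans--Krylov go through without modification.
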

\begin{proof}
Using the function $v$ of Lemma~\ref{k2.6}, we need to solve the Monge-Ampère equation
$$
\log\left(  \frac{ (\omega_{\epsilon,k}+ \sqrt{-1}\pa\db (v+u))^n }{c_n\Omega_{\epsilon}^n\wedge \Omega_{\epsilon}^n} \right)= -r_{\epsilon,k}.
$$
Proceeding as in the proof of Lemma~\ref{cy.1}, we can first improve the decay of the Ricci potential by applying Corollary~\ref{mp.41} with $\tau-2= -\frac{4(n-1)}{n}$ and 
$\delta = - \frac{n\nu}{2(n-1)}$ to get instead a Ricci potential in 
$$
       x_1^{2\nu}x_2^{\frac{8(n-1)}{n}+2\nu}\CI_{\Qb}(W_{\epsilon,k})\subset x_1^{2\nu}x_2^{\frac{4(n-1)}{n}+2\nu}\CI_{\Qb}(W_{\epsilon,k}).
$$
We can then reapply Corollary~\ref{mp.41}, always with $\tau-2= -\frac{4(n-1)}n$, but now with $\delta=- 2\frac{n\nu}{2(n-1)}$, and then $\delta=- 4\frac{n\nu}{2(n-1)}$, and then
$ \delta=- 8\frac{n\nu}{2(n-1)}$ and so forth until we obtain a Ricci potential in
$$
x_1^{2^\ell\nu}x_2^{\frac{2^{\ell}4(n-1)}{n}+2\nu}\CI_{\Qb}(W_{\epsilon,k})
$$
with $2^{\ell}\nu> \frac{4(n-1)}{n}$ so that the Ricci potential decays faster than quadratically at infinity.  We can then apply the continuity method as in the proof of Theorem~\ref{cy.3} to obtain the desired Calabi-Yau warped $\QAC$-metric.     
\end{proof}

\appendix

\section{Mapping properties of the scalar Laplacian of an incomplete edge metric} \label{ap.0}

The results presented in this appendix are known to experts, but they did not seem to appear in the literature in the explicit form that we needed.  

Let $M$ be a compact manifold of dimension $m$ with boundary equipped with fibre bundle structure
\begin{equation}
    \xymatrix{ Z \ar[r] & \pa M \ar[d]^{\varpi} \\
       & Y, 
    }
\label{ap.1}\end{equation}
where $Y$ and $Z$ are closed manifolds of dimension $\operatorname{b}$ and $\dv$ respectively.  Let $r\in \CI(M)$ be a boundary defining function.  On $M\setminus \pa M$, we consider an incomplete edge metric $g_{ie}$ which near $\pa M$ takes the form
\begin{equation}
     g_{ie}= dr^2 + \varpi^*g_Y + r^2 \kappa,
\label{ap.2}\end{equation}
where $g_Y$ is a metric on $Y$ and $\kappa$ is a symmetric $2$-tensor which restricts to a metric on each fibre of $\varpi: \pa M\to Y$, \cf \cite{Mazzeo-Vertman}.  We also require that $\varpi: \pa M \to Y $ is a Riemannian submersion with respect to the metrics $\varpi^* g_Y + \kappa$ and  $g_Y$.  
In particular, when we fix $y\in Y$, this induces a cone metric on $(0,\delta)_r\times \varpi^{-1}(y)$ for some $\delta>0$.  Notice that the conformal metric $g_e:= \frac{g_{ie}}{r^2}$ is an edge metric in the sense of \cite{MazzeoEdge}.   

Let $\Delta_{ie}:= -\operatorname{div}\circ \nabla$ be the Laplacian of $g_{ie}$.  Then $r^2\Delta_{ie}$ is an elliptic edge operator, so that the edge calculus of Mazzeo \cite{MazzeoEdge} can be used to determine the mapping properties of $\Delta_{ie}$.  Let $L^2_{ie}(M)$ be the space of square integrable functions with respect to the volume density of $g_{ie}$.  If $H^k_e(M)$ denotes the $L^2$-Sobolev space of order $k$ with respect to the edge metric $g_e$, then set also
\begin{equation}
   H^k_{ie}(M):= r^{-\frac{m}2}H^k_e(M),
\label{ap.3}\end{equation}   
where the factor $r^{-\frac{m}2}$ is there to ensure that $H^0_{ie}(M)=L^2_{ie}(M)$.  Notice however that $H^k_{ie}(M)$ is not the natural Sobolev space associated to $g_{ie}$. Indeed, we do use the volume density of $g_{ie}$, but derivatives are instead measured with respect to the edge metric $g_e$.  As explained in \cite{MazzeoEdge}, besides the principal symbol, we need to consider two model operators to determine the mapping properties of the edge operator $r^2\Delta_{ie}$.  We need first to consider its indicial operator, which for fixed $y\in Y$ is given by
\begin{equation}
 I_y(r^2\Delta_{ie})= -\left(r\frac{\pa}{\pa r}\right)^2- (\dv-1)r\frac{\pa}{\pa r} + \Delta_{\kappa_y},
\label{ap.4}\end{equation}         
where $\Delta_{\kappa_y}$ is the Laplacian induced by $\kappa$ on $\varpi^{-1}(y)$.  The corresponding indicial family is then given by
\begin{equation}
I_y(r^2\Delta_{ie},\lambda)= -\lambda^2-(\dv-1)\lambda + \Delta_{\kappa_y}.
\label{ap.5}\end{equation}
Since $\Delta_{\kappa_y}$ has positive spectrum, notice that this family is invertible for $1-\dv<\lambda<0$.  
One needs also to consider its normal operator \cite[Definition~2.16]{MazzeoEdge} defined for $y\in Y$ by
\begin{equation}
N_y(r^2\Delta_{ie})= -\left( r\frac{\pa}{\pa r} \right)^2 -(\dv-1)r\frac{\pa}{\pa r} + r^2 \Delta_{T_yY} + \Delta_{\kappa_y},
\label{ap.6}\end{equation}
where $\Delta_{T_yY}$ is the Euclidean Laplacian on $T_yY$ induced by the metric $g_Y$.  

\begin{lemma}
For $a> \frac{3-\dv}2$, the normal operator  $N_y(r^2\Delta_{ie})$ is injective when acting on  the weighted $L^2$-space $r^a L^2(\bbR^+_r\times T_yY\times \varpi^{-1}(y))$ defined in terms of the volume density of the metric 
$$
    dr^2+ g_{T_yY}+ \kappa_y,
$$
where $g_{T_yY}$ is the Euclidean metric obtained by restricting $g_Y$ to $T_yY$.  
\label{ap.7}\end{lemma}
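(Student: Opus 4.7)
The plan is to reduce the equation $N_y(r^2\Delta_{ie})u=0$ to a family of modified Bessel equations by spectral decomposition along the fibre $\varpi^{-1}(y)$ and Fourier transform along $T_yY$, and then read off the obstruction to $r^aL^2$-membership from the small-$r$ asymptotics of the surviving solutions. Let $\{\phi_j\}_{j\ge 0}$ be an $L^2(\varpi^{-1}(y),\kappa_y)$-orthonormal basis of eigenfunctions of $\Delta_{\kappa_y}$ with non-negative eigenvalues $\mu_j$, with $\mu_0=0$ and $\phi_0$ constant. Given $u\in r^aL^2$ with $N_y u = 0$, interior elliptic regularity makes $u$ smooth on $\bbR^+_r\times T_yY\times\varpi^{-1}(y)$. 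Expanding
\[
u(r,y',z) = \sum_{j\ge 0}u_j(r,y')\phi_j(z)
\]
and taking the Fourier transform in $y'\in T_yY\cong\bbR^{\operatorname{b}}$, each profile $\hat u_j(r,\xi)$ satisfies
\[
\bigl[-r^2\pa_r^2 - \dv\, r\pa_r + r^2|\xi|^2 + \mu_j\bigr]\hat u_j(r,\xi) = 0.
\]

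For $\xi\ne 0$, the substitution $\hat u_j(r,\xi) = r^{-(\dv-1)/2} f_j(|\xi|r,\xi)$ converts this into the modified Bessel equation of order $\nu_j := \sqrt{(\dv-1)^2/4 + \mu_j}$, whose linearly independent solutions are $I_{\nu_j}$ (exponentially growing at infinity) and $K_{\nu_j}$ (exponentially decaying). The exponential growth of $I_{\nu_j}$ is incompatible with membership in any $r^aL^2$ space near $r=\infty$, so its coefficient must vanish, leaving
\[
\hat u_j(r,\xi) = c_j(\xi)\,r^{-(\dv-1)/2}K_{\nu_j}(|\xi|r)
\]
for some measurable $c_j$.

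Next, using the standard small-argument asymptotic $K_\nu(s) = O(s^{-\nu})$ as $s\to 0^+$ (with $K_0(s) = O(|\log s|)$), one obtains
\[
|\hat u_j(r,\xi)|^2 \asymp |c_j(\xi)|^2\,|\xi|^{-2\nu_j}\,r^{-(\dv-1)-2\nu_j}\quad \text{as } r\to 0^+.
\]
By Parseval and Fubini, the condition $u\in r^aL^2$ translates to $\int_0^\infty|\hat u_j(r,\xi)|^2r^{-2a}\,dr<\infty$ for a.e.\ $\xi$ with $c_j(\xi)\ne 0$, which forces $a<1-\dv/2-\nu_j$. Since $\nu_j\ge (\dv-1)/2$ for every $j$, this upper bound is at most $(3-2\dv)/2$, and the hypothesis $a>(3-\dv)/2>(3-2\dv)/2$ (valid for $\dv\ge 1$) violates it. Consequently $c_j\equiv 0$ for every $j$, yielding $u=0$.

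The main technical point to handle carefully is the rigorous justification of the spectral and Fourier decompositions for $u$ lying only in a weighted $L^2$ space, so that $N_y$ may be applied term by term in the sense of tempered distributions; this is standard given the discreteness of the spectrum of $\Delta_{\kappa_y}$ on the compact fibre $\varpi^{-1}(y)$ and the interior smoothness of $u$. I note that the exponent $(3-\dv)/2$ in the statement is not optimal—the argument above suggests the sharp injectivity threshold is $(3-2\dv)/2$—but it suffices for the applications in \S\ref{k2.0}.
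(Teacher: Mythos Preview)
Your proof is correct and follows essentially the same route as the paper's: spectral decomposition on the fibre, Fourier transform in $T_yY$, and reduction to the modified Bessel equation via the substitution $\hat u_j=r^{-(\dv-1)/2}f_j(|\xi|r)$, which is exactly the conjugation by $r^{(\dv-1)/2}$ that the paper performs. Your observation that the sharp threshold with the measure $dr\,dy'\,d\kappa_y$ is $(3-2\dv)/2$ rather than $(3-\dv)/2$ is correct; the paper's own computation tacitly uses a density carrying an extra power of $r$ (it passes to $L^2_h$ with $h=r^2dr^2+g_{T_yY}+\kappa_y$ and a conjugation that matches the incomplete edge density $r^{\dv}\,dr$), so the discrepancy reflects an ambiguity in the stated measure rather than any error in your argument.
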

\begin{proof}
We follow the approach of \cite[Lemma~5.5]{ALMP2012}.  Since 
$$
       L^2_{g_w}(\bbR^+_r\times T_yY\times \varpi^{-1}(y))= r^{-\frac{\dv-1}2}L^2_h(R^+_s\times T_yY\times \varpi^{-1}(y))
$$
with $h= r^2dr^2 + g_{T_yY}+ \kappa_y$, we can instead look at the conjugated operator 
$$
  r^{\frac{\dv-1}2}N_y(r^2\Delta_{ie})r^{-\frac{\dv-1}2}= -\left( r\frac{\pa}{\pa r} \right)^2  + \left( \frac{\dv-1}2\right)^2 + r^2 \Delta_{T_yY} + \Delta_{\kappa_y}
$$
acting on $r^aL^2_h(\bbR^+_r\times T_yY\times \varpi^{-1}(y))$. Taking the Fourier transform on $T_yY$ and denoting by $\xi$ the dual variable gives the operator
$$
\widehat{r^{\frac{\dv-1}2}N_y(r^2\Delta_{ie})r^{-\frac{\dv-1}2}}= -\left( r\frac{\pa}{\pa r} \right)^2  + \left( \frac{\dv-1}2\right)^2 + r^2 |\xi|^2 + \Delta_{\kappa_y}.
$$
Introducing the variables $t:=r|\xi|$ and $\widehat{\xi}:= \frac{\xi}{|\xi|}$, this can be rewritten as
$$
\widehat{r^{\frac{\dv-1}2}N_y(r^2\Delta_{ie})r^{-\frac{\dv-1}2}}= -\left( t\frac{\pa}{\pa t} \right)^2  + \left( \frac{\dv-1}2\right)^2 + t^2 + \Delta_{\kappa_y}.
$$
To describe the nullspace of this operator, we decompose it in terms of the spectrum of $\Delta_{\kappa_y}$.  Thus, restricting to the $\lambda$-eigenspace of 
$\Delta_{\kappa_y}$ gives 
$$
-\left( t\frac{\pa}{\pa t} \right)^2  + \left( \frac{\dv-1}2\right)^2 + t^2 + \lambda.
$$
The nullspace of this operator are the solutions to the modified Bessel equation, so is described in terms of the modified Bessel functions, 
$$
   A I_{\nu}(t)+  B K_{\nu}(t), \quad \nu= \sqrt{\lambda+ \left( \frac{\dv-1}2 \right)^2}.
$$
The function $I_{\nu}$ increases exponentially as $t\nearrow \infty$ so we must have $A=0$ for solutions to be in 
$$
r^aL^2_h(\bbR^+_r\times  T_yY\times  \varpi^{-1}(y)).
$$  
On the other hand, $K_{\nu}$ decreases exponentially as $t\nearrow \infty$, whereas as $t\searrow 0$, 
$$
     K_{\nu}\asymp \left\{ \begin{array}{ll} t^{-|\nu|}, & \mbox{if} \; \nu\ne 0, \\ \log t, & \mbox{if} \; \nu=0.  \end{array}  \right.
$$
Thus, to have an injective operator when acting on $r^aL^2_h(\bbR^+_r\times T_yY\times \varpi^{-1}(y))$, we need that $2|\nu|+ 2a\ge 2$, that is, $a\ge 1-\sqrt{\lambda+\left(\frac{\dv-1}2\right)^2}$.  Since $\lambda\ge 0$, the result follows.  

\end{proof}

\begin{proposition}
If $\dv> 3$, the operator $\Delta_{ie}$ is essentially self-adjoint with self-adjoint extension given by
$$
     \Delta_{ie}: r^2 H^2_{ie}(M) \to L^2_{ie}(M).
$$
\label{ap.10}\end{proposition}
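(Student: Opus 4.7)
The plan is to invoke Mazzeo's edge calculus for the elliptic edge operator $r^2\Delta_{ie}$ of order two. Two ingredients make it fully elliptic: the indicial family \eqref{ap.5} is invertible for $\lambda\in(1-\dv,0)$, as noted just below \eqref{ap.5}, and the normal operator is injective on unweighted $L^2$ of the model space by Lemma~\ref{ap.7} applied with $a=0$, which is admissible precisely because the hypothesis $\dv>3$ gives $0>(3-\dv)/2$. The small edge calculus then produces a parametrix with compact residuals, yielding in particular that $\Delta_{ie}\colon r^2H^2_{ie}(M)\to L^2_{ie}(M)$ is bounded and Fredholm of index zero.

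Essential self-adjointness amounts to $\cD_{\min}=\cD_{\max}$, where $\cD_{\max}=\{u\in L^2_{ie}(M):\Delta_{ie}u\in L^2_{ie}(M)\}$, and it therefore suffices to identify $\cD_{\max}$ with $r^2H^2_{ie}(M)$. Edge elliptic regularity applied to $u\in\cD_{\max}$ yields a partial polyhomogeneous expansion at $\pa M$ whose exponents are drawn from the indicial roots of $r^2\Delta_{ie}$; decomposing along the spectrum of $\Delta_{\kappa_y}$ on the fibre, these are $\lambda\in\{0,1-\dv\}$ in the constant mode and $\lambda_{\pm}(\mu)=\tfrac12(-(\dv-1)\pm\sqrt{(\dv-1)^2+4\mu})$ in each sector with fibre eigenvalue $\mu>0$. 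Since the volume density of $g_{ie}$ behaves like $r^{\dv}$ near $\pa M$, a model term $r^{\lambda}$ belongs to $L^2_{ie}$ if and only if $\lambda>-(\dv+1)/2$. The hypothesis $\dv>3$ is equivalent to $1-\dv<-(\dv+1)/2$, which disqualifies the root $1-\dv$; moreover, since $\lambda_-(\mu)<1-\dv$ for $\mu>0$, it also disqualifies every $\lambda_-(\mu)$. Hence all admissible indicial exponents satisfy $\alpha\geq 0$, which places $u\in r^2H^2_{ie}(M)$ (indeed $r^{\alpha}\in r^2H^2_{ie}(M)$ iff $\alpha>(3-\dv)/2$, comfortably satisfied here, since the edge Sobolev norm of $r^{\alpha}$ reduces to its $L^2_e$ norm).

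The reverse inclusion $r^2H^2_{ie}(M)\subset\cD_{\max}$ is immediate from the boundedness of $\Delta_{ie}$ between these spaces, so $\cD_{\max}=r^2H^2_{ie}(M)$. A standard cutoff and mollification argument, using the controlled boundary behavior of elements of $r^2H^2_{ie}(M)$ to absorb the commutators $[\Delta_{ie},\chi_{\epsilon}]$ as $\epsilon\to 0$, then shows that $\CI_c(M\setminus\pa M)$ is dense in $r^2H^2_{ie}(M)$ in the graph norm, giving $\cD_{\min}=\cD_{\max}$ and hence essential self-adjointness with the claimed self-adjoint extension. The principal obstacle is the simultaneous indicial-root bookkeeping across all fibre eigenmodes: precisely the hypothesis $\dv>3$ pushes both the exceptional root $1-\dv$ in the constant mode and every root $\lambda_-(\mu)$ in the positive modes strictly below the $L^2_{ie}$ threshold, and this is exactly the feature the argument exploits.
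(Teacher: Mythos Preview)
Your argument is essentially correct and recovers the result, but it takes a considerably more hands-on route than the paper. The paper's proof is two lines of citation: essential self-adjointness is read off from \cite[Lemma~2.2]{Mazzeo-Vertman} together with the invertibility of the indicial family \eqref{ap.5} on the whole strip $1-\dv<\lambda<0$, and the identification of the minimal domain with $r^2H^2_{ie}(M)$ is quoted from \cite[Theorem~4.2]{GKM}, using only that the indicial family is invertible at the single value $\lambda=\tfrac{3-\dv}{2}$. In particular, the paper's proof uses \emph{only} indicial data; your appeal to Lemma~\ref{ap.7} and the normal operator to get a Fredholm parametrix is not needed for this proposition (it is what drives Theorem~\ref{ap.8} instead), and the index-zero claim, while harmless, is never used. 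What you have effectively done is unpack the content of the two cited results: your indicial-root bookkeeping, showing that for $\dv>3$ the roots $1-\dv$ and every $\lambda_-(\mu)$ fall below the $L^2_{ie}$ threshold $-(\dv+1)/2$, is exactly the computation underlying the Mazzeo--Vertman criterion, and your edge-regularity-plus-remainder argument that $\cD_{\max}\subset r^2H^2_{ie}(M)$ is what the GKM theorem packages. The trade-off is that the paper's approach is terse and portable, while yours is self-contained and makes explicit where the hypothesis $\dv>3$ enters; one minor caveat is that the ``partial polyhomogeneous expansion'' you invoke is cleanest under a constant-fibre-spectrum hypothesis (cf.\ Corollary~\ref{ap.9}), which the proposition does not assume, so strictly speaking one should phrase that step as conormal regularity rather than polyhomogeneity.
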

\begin{proof}
By \cite[Lemma~2.2]{Mazzeo-Vertman} and the fact that the indicial family \eqref{ap.5} is invertible for $1-\dv< \lambda <0$, we see that $\Delta_{ie}$ is essentially self-adjoint.
On the other hand, since the indicial family \eqref{ap.5} is invertible for $\lambda= \frac{-\dv+3}2$, we know from \cite[Theorem~4.2]{GKM} that the minimal extension is $r^2 H^2_{ie}(M)$.
\end{proof}

\begin{theorem}
For $\frac{3-\dv}2< a< \frac{\dv+1}2$ and $\ell\in \bbZ$,  the operator 
\begin{equation}
      \Delta_{ie} : r^a H^{\ell+2}_{ie}(M)\to r^{a-2}H^{\ell}_{ie}(M)
\label{ap.8a}\end{equation}
is Fredholm and has discrete spectrum.  Moreover, if we assume that the spectrum of the Laplacian $\Delta_{\kappa_y}$ does not depend on the choice of $Y$, then the inverse $G$ is in  $\Psi^{-2,\cH}_e(M)$ and is such that 
$$
      G(r^2 \Delta_{ie})= \Id - P_1,  \quad r^2 \Delta_{ie}G= \Id -P_2
$$  
with $P_1\in \Psi^{-\infty,\cE'}(M)$ and  $P_2\in \Psi^{-\infty,\cF'}(M)$  very residual operators, where $\cH$, $\cE'$ and $\cF'$ are index families as described in 
\cite[Theorem~6.1]{MazzeoEdge}.    
\label{ap.8}\end{theorem}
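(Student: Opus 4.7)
The plan is to apply Mazzeo's edge calculus \cite{MazzeoEdge} to the elliptic edge operator $r^2\Delta_{ie}$. Using the identification $H^k_{ie}(M)=r^{-m/2}H^k_e(M)$ recalled in \eqref{ap.3}, the Fredholm claim for $\Delta_{ie}:r^a H^{\ell+2}_{ie}(M)\to r^{a-2}H^\ell_{ie}(M)$ is equivalent to the Fredholm property of
$$
r^2\Delta_{ie}:r^{a-m/2}H^{\ell+2}_e(M)\to r^{a-m/2}H^\ell_e(M),
$$
which by \cite[Theorem~6.1]{MazzeoEdge} and the standard edge parametrix construction reduces to verifying (i) invertibility of the indicial family $I_y(r^2\Delta_{ie},\lambda)$ on the critical vertical line, and (ii) invertibility of the normal operator $N_y(r^2\Delta_{ie})$ on the appropriate weighted $L^2$ model space.

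For (i), since $\Delta_{\kappa_y}\geq 0$, the roots of \eqref{ap.5} are real: for the zero eigenvalue they are $0$ and $1-\dv$, while for positive eigenvalues they lie strictly outside $[1-\dv,0]$. Hence the indicial family is invertible throughout the open strip $1-\dv<\operatorname{Re}\lambda<0$, which under our range on $a$ contains the critical weight. For (ii), Lemma~\ref{ap.7} provides the injectivity of $N_y(r^2\Delta_{ie})$ on the weighted $L^2$-space for $a>\frac{3-\dv}{2}$. Surjectivity then follows by duality: $N_y(r^2\Delta_{ie})$ is formally self-adjoint with respect to the natural cone measure on $\bbR^+_r\times T_yY\times\varpi^{-1}(y)$, so under the $L^2$-pairing surjectivity at weight $a$ is equivalent to injectivity of the adjoint at the dual weight; the upper bound $a<\frac{\dv+1}{2}$ is precisely the condition ensuring that this dual weight satisfies the hypothesis of Lemma~\ref{ap.7}, so the interval $(\tfrac{3-\dv}{2},\tfrac{\dv+1}{2})$ is symmetric under the duality and Lemma~\ref{ap.7} applies at both ends.

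Combining (i) and (ii), \cite[Theorem~6.1]{MazzeoEdge} produces a parametrix $Q$ such that $(r^2\Delta_{ie})Q=\Id-R_1$ and $Q(r^2\Delta_{ie})=\Id-R_2$ with $R_1,R_2$ very residual smoothing operators, which are compact on the relevant weighted Sobolev spaces; this gives the Fredholm property. For discreteness of the spectrum, Proposition~\ref{ap.10} provides that $\Delta_{ie}:r^2 H^2_{ie}(M)\to L^2_{ie}(M)$ is self-adjoint, and a standard Rellich result in the edge calculus (extractable from the parametrix itself together with the polynomial decay built into the weight) yields compactness of the embedding $r^2 H^2_{ie}(M)\hookrightarrow L^2_{ie}(M)$, hence compact resolvent and a discrete spectrum.

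Finally, when $\operatorname{spec}(\Delta_{\kappa_y})$ is independent of $y$, the indicial roots of $r^2\Delta_{ie}$ do not depend on $y$ either, so Mazzeo's polyhomogeneous edge parametrix construction can be sharpened to produce a true inverse $G\in\Psi^{-2,\cH}_e(M)$ in the polyhomogeneous edge calculus, with index family $\cH$ explicitly determined by the common indicial roots and with remainders $P_1,P_2$ very residual in the sense of \cite[Theorem~6.1]{MazzeoEdge}. The main obstacle is the surjectivity half of (ii): this requires identifying the correct duality pairing on the non-compact model space and verifying that the weight shift induced by the cone volume form is precisely what makes the interval $(\tfrac{3-\dv}{2},\tfrac{\dv+1}{2})$ symmetric under this duality, so that Lemma~\ref{ap.7} can be invoked for both endpoints.
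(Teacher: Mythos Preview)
Your proposal is correct and follows the same overall strategy as the paper: invoke Mazzeo's edge calculus for $r^2\Delta_{ie}$, check the indicial family is invertible on the strip $1-\dv<\lambda<0$, use Lemma~\ref{ap.7} for the normal operator, and deduce discreteness from a compact inclusion. The one genuine difference is where the duality argument is run. You argue for full invertibility of the \emph{normal operator} on the model space $\bbR^+_r\times T_yY\times\varpi^{-1}(y)$, pairing injectivity at weight $a$ with surjectivity obtained from injectivity at the dual weight; you correctly identify this as the delicate step. The paper instead uses only the semi-Fredholm output of \cite[Theorem~6.1]{MazzeoEdge} (essential injectivity from injectivity of $N_y$), and then applies duality at the \emph{global} level: since $\Delta_{ie}$ is formally self-adjoint for $g_{ie}$, the adjoint of \eqref{ap.8a} is $\Delta_{ie}:r^{2-a}H^{-\ell}_{ie}(M)\to r^{-a}H^{-\ell-2}_{ie}(M)$, and the condition $a<\frac{\dv+1}{2}$ is exactly $2-a>\frac{3-\dv}{2}$, so Lemma~\ref{ap.7} applies again and the adjoint is also essentially injective, whence Fredholm. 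This global duality sidesteps the issue you flag at the end about identifying the correct pairing and weight shift on the non-compact model; otherwise the two arguments are equivalent.
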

\begin{proof}
By Lemma~\ref{ap.7}, \cite[Theorem~6.1]{MazzeoEdge}\footnote{See also \cite[Proposition~4.2 and p.3260]{HM2005}  for the case of variable eigenvalues for the Laplacian $\Delta_{\kappa_y}$.} and the fact that the indicial family \eqref{ap.5} is invertible for $1-\dv<\lambda<0$, we know that the map \eqref{ap.8a} is essentially injective, that is, has closed range and finite dimensional kernel.  Since $\Delta_{ie}$ is formally self-adjoint with respect to the metric $g_{ie}$, the adjoint of the map \eqref{ap.8a} is 
$$
  \Delta_{ie}: r^{2-a}H^{-\ell}_{ie}(M)\to r^{-a}H^{-\ell-2}_{ie}(M),
$$
which by the same argument is essentially injective.  This means that the map \eqref{ap.8a} in in fact Fredholm.  The left and right inverse are then given by \cite[Theorem~6.1]{MazzeoEdge}.  Since $r^a H^{\ell+2}_{ie}(M)$ is compactly included in $r^{a-2} H^{\ell}_{ie}(M)$, the left inverse $Gr^2$ is compact, so the spectrum of $\Delta_{ie}$ must be discrete.   
\end{proof}

\begin{corollary}
Assume that the spectrum of $\Delta_{\kappa_y}$ does not depend on $y\in Y$ and that $a>\frac{3-\dv}2$.  In this case,  if $u\in r^a L^2_{ie}(M)$ is such that $\Delta_{ie}u$ is polyhomogeneous, then $u$ itself is polyhomogeneous.  Similarly, the eigenfunctions of $\Delta_{ie}$ contained in $r^a H^2_{ie}(M)$ are polyhomogeneous.
\label{ap.9}\end{corollary}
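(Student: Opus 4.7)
The plan is to invoke the edge parametrix provided by Theorem~\ref{ap.8}. Under the constancy assumption on the spectrum of $\Delta_{\kappa_y}$, we have a Green's operator $G\in \Psi^{-2,\cH}_e(M)$ satisfying $G(r^2\Delta_{ie})=\Id-P_1$ with $P_1$ very residual. The defining feature of very residual operators is that their Schwartz kernels are polyhomogeneous conormal on the edge double space and vanish to infinite order at the front face; applying Melrose's pushforward theorem, $P_1$ sends $r^a L^2_{ie}(M)$ into $\cA_{\phg}(M)$.

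For the first assertion, suppose $u\in r^a L^2_{ie}(M)$ with $f:=\Delta_{ie}u\in \cA_{\phg}(M)$. Then $r^2 f\in \cA_{\phg}(M)$ and the parametrix identity gives
$$
u \;=\; G(r^2 f) + P_1 u.
$$
The term $P_1 u$ is polyhomogeneous by the above. For $G(r^2 f)$, since the Schwartz kernel of $G$ is polyhomogeneous on $M^2_e$ with index family $\cH$ and $r^2 f$ is polyhomogeneous on $M$, the pushforward of the product under the right projection is polyhomogeneous on $M$, with index family obtained from $\cH$ and the expansion of $r^2 f$ via Melrose's extended union rules. Hence $u$ is polyhomogeneous.

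For the eigenfunction case $\Delta_{ie}u = \lambda u$ with $u\in r^a H^2_{ie}(M)$, one cannot apply the first assertion directly since $\lambda u$ is not a priori polyhomogeneous. Instead one can either observe that $r^2(\Delta_{ie}-\lambda)$ admits a parametrix of the same form as $r^2\Delta_{ie}$ (the zeroth-order perturbation $\lambda r^2$ vanishes at $r=0$ and so alters neither the indicial family~\eqref{ap.5} nor the normal operator~\eqref{ap.6}), from which $u = P_1^{(\lambda)} u$ is immediately polyhomogeneous; or alternatively bootstrap using
$$
u \;=\; \lambda\, G(r^2 u) + P_1 u,
$$
iterating $N$ times to write $u = \sum_{j=0}^{N-1}(\lambda G r^2)^j P_1 u + (\lambda G r^2)^N u$, then Borel-summing the resulting formal series into a polyhomogeneous function $\widetilde u$, and finally showing that $u-\widetilde u$ vanishes to infinite order at $\pa M$ and satisfies an equation forcing it to be smooth and rapidly vanishing.

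The main technical obstacle is the bookkeeping of index families under composition in the edge calculus: one must verify that $G$ maps polyhomogeneous functions to polyhomogeneous functions with the expected index family, and that the indicial roots governing the expansion at $r=0$ are compatible with the weight $a$ so that no obstruction to continuing the expansion arises. Both points are standard consequences of Mazzeo's edge calculus combined with the explicit form of the indicial family~\eqref{ap.5}, but the combinatorial tracking of logarithmic terms at the various boundary hypersurfaces of $M^2_e$ requires some care.
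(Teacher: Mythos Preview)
Your approach is correct and essentially the same as the paper's: both invoke the edge parametrix of Theorem~\ref{ap.8} together with the fact that the very residual remainder maps weighted $L^2$ into polyhomogeneous functions (the paper simply cites \cite[Proposition~7.17]{MazzeoEdge} rather than spelling this out), and for the eigenfunction statement both observe that $r^2(\Delta_{ie}-\lambda)$ has the same indicial family and normal operator as $r^2\Delta_{ie}$. The only cosmetic difference is that the paper points to $P_2$ while you work with $P_1$; either suffices, and your use of the left parametrix identity $u=G(r^2f)+P_1u$ is the more natural one for this regularity statement.
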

\begin{proof}
It suffices to proceed as  in \cite[Proposition~7.17]{MazzeoEdge} using that $P_2$ in Theorem~\ref{ap.8} is very residual.  For the statement about the eigenfunctions, notice that $r^2(\Delta_{ie}-\lambda)$ has the same principal symbol, the same indicial operator and the same normal operator as $r^2\Delta_{ie}$.  Hence, the conclusion of Theorem~\ref{ap.8} also holds for this operator, so elements in the kernel of $\Delta_{ie}-\lambda$ that are in $r^a L^2_{ie}(M)$ must be polyhomogeneous.  
\end{proof}

There is also a corresponding statement in terms of edge Hölder spaces, \cf \cite{Szekelyhidi}.  

\begin{corollary}
Assume that the spectrum of $\Delta_{\kappa_y}$ does not depend on $y\in Y$.  Then for $-\dv+1<a<0$, $\ell\in\bbN_0$ and $\alpha\in (0,1)$, the operator 
$$
     \Delta_{ie}: r^a\cC^{\ell+2,\alpha}_e(M) \to r^{a-2}\cC^{\ell,\alpha}_e(M)
$$
is Fredholm with discrete spectrum, where $\cC^{\ell,\alpha}_e(M)$ denotes the edge Hölder space of \cite{MazzeoEdge}. 
\label{ap.10}\end{corollary}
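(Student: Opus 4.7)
The plan is to transfer the Fredholm property from the Sobolev setting of Theorem~\ref{ap.8} to the Hölder setting by reusing the edge parametrix $G\in\Psi^{-2,\mathcal{H}}_e(M)$ produced there. The first step is to verify that, under the weight condition $-\dv+1<a<0$, the indicial family $I_y(r^2\Delta_{ie},\lambda)$ is invertible for all real $\lambda\in(a-2,a)$: on the zero-eigenspace of $\Delta_{\kappa_y}$ the indicial roots occur exactly at $\lambda=0$ and $\lambda=-(\dv-1)$, while on any higher eigenspace one has $-\lambda^2-(\dv-1)\lambda+\mu>0$ throughout $(-(\dv-1),0)$ for $\mu>0$. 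The absence of indicial roots in the weight strip, combined with the hypothesis that $\operatorname{Spec}\Delta_{\kappa_y}$ is $y$-independent, ensures that the index family $\mathcal{H}$ of \cite[Theorem~6.1]{MazzeoEdge} can be chosen so that $G$ maps $r^{a-2}\cC^{\ell,\alpha}_e(M)$ continuously into $r^a\cC^{\ell+2,\alpha}_e(M)$; this boundedness follows from the polyhomogeneous conormal structure of the Schwartz kernel of $G$ on the edge double space, together with the Schauder estimates for elliptic edge operators.

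Next I would verify that the very residual error operators $P_1$ and $P_2$ in the identities $G(r^2\Delta_{ie})=\Id-P_1$ and $(r^2\Delta_{ie})G=\Id-P_2$ act compactly on $r^a\cC^{\ell+2,\alpha}_e(M)$ and $r^{a-2}\cC^{\ell,\alpha}_e(M)$ respectively. Their Schwartz kernels are polyhomogeneous on the edge double space with exponents strictly above the weights dictated by $a$ at every boundary face, so they factor through an inclusion of the form $r^{a+\eta}\cC^{\ell+3,\alpha}_e(M)\hookrightarrow r^a\cC^{\ell+2,\alpha}_e(M)$ for some $\eta>0$, which is compact by Arzel\`a-Ascoli using the bounded geometry of $g_e$ away from $\pa M$ together with the gain of boundary decay. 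The parametrix identities then exhibit $\Delta_{ie}$ as Fredholm between the stated Hölder spaces via the classical Fredholm alternative.

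For the discreteness of the spectrum, I would observe that for any $\lambda\in\bbC$ the operator $r^2(\Delta_{ie}-\lambda)$ has exactly the same principal symbol, indicial operator and normal operator as $r^2\Delta_{ie}$; hence $\Delta_{ie}-\lambda$ is Fredholm on the same spaces for every $\lambda$, with parametrix depending holomorphically on $\lambda$. The natural inclusion $r^a\cC^{\ell+2,\alpha}_e(M)\hookrightarrow r^{a-2}\cC^{\ell,\alpha}_e(M)$ is compact, so the analytic Fredholm theorem applied to the meromorphic family $\lambda\mapsto(\Delta_{ie}-\lambda)^{-1}$ shows that the spectrum is discrete with finite-dimensional eigenspaces; by Corollary~\ref{ap.9} the Hölder eigenfunctions are automatically polyhomogeneous and hence coincide with those arising in the Sobolev setting.

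The main technical obstacle is the mapping property in the first step: the edge pseudodifferential calculus of \cite{MazzeoEdge} is most cleanly formulated on Sobolev spaces, and one has to inspect the conormal-polyhomogeneous structure of the Schwartz kernel of $G$ at the front face and the lifted diagonal to confirm that it respects Hölder regularity with the correct two-weight shift. The assumption that $\operatorname{Spec}\Delta_{\kappa_y}$ is independent of $y$ keeps $\mathcal{H}$ well-behaved along the base and allows the Schauder argument to close; once this mapping property is in hand, the remainder of the proof is a routine invocation of the Fredholm alternative and analytic continuation.
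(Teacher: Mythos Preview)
Your approach is essentially the paper's: both transfer the Sobolev parametrix of Theorem~\ref{ap.8} to the H\"older setting using the mapping properties of edge pseudodifferential operators on $\cC^{\ell,\alpha}_e$ spaces. The paper's own proof is two lines, citing \cite[Corollary~5.4]{MazzeoEdge} for the H\"older boundedness of $G$ rather than sketching the kernel analysis you outline, and then noting that the shift in the admissible weight range from $\bigl(\tfrac{3-\dv}{2},\tfrac{\dv+1}{2}\bigr)$ to $(-\dv+1,0)$ comes from the inclusion $r^a\cC^{\ell,\alpha}_e(M)\subset r^{a+\frac{\dv+1}{2}}L^2_{ie}(M)$; you instead reach the same interval directly by locating the indicial roots at $0$ and $1-\dv$, which is equally valid.

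One small wrinkle: your opening verification that the indicial family is invertible ``for all real $\lambda\in(a-2,a)$'' is not the condition actually needed. What matters is that the single weight $a$ avoids the indicial roots (so that the index family $\cH$ can be arranged with leading exponents strictly above $a$ at the left face and strictly above $-a$ at the right face). The interval $(a-2,a)$ plays no role here; this looks like a slip, but it does not affect the rest of your argument.
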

\begin{proof}
This is a consequence of Theorem~\ref{ap.8} and \cite[Corollary~5.4]{MazzeoEdge}.  The change of range of admissible values of $a$ is related to the fact that there is a natural inclusion $r^a\cC^{\ell,\alpha}_e(M)\subset r^{a+\frac{\dv+1}2}L^2_{ie}(M)$.  
\end{proof}

\bibliography{CYCn}
\bibliographystyle{amsplain}

\end{document}